\theoremstyle{plain}
\newtheorem{thm}{Theorem}[section]
\newtheorem{lem}[thm]{Lemma}
\newtheorem{prop}[thm]{Proposition}
\newtheorem{conj}[thm]{Conjecture}
\newtheorem{coro}[thm]{Corollary}
\theoremstyle{definition}
\newtheorem{defi}[thm]{Definition}
\newtheorem*{nota}{Notation}
\newtheorem{rem}[thm]{Remark}
\newcommand{\R}{\mathbb{R}}
\newcommand{\N}{\mathbb{N}}
\newcommand{\Z}{\mathbb{Z}}
\newcommand{\pr}{\mathbb{P}}
\title{Vertical shift and simultaneous Diophantine approximation on polynomial curves}
\author{Faustin ADICEAM\\
   Department of Mathematics, Logic House,\\ 
   National University of Ireland at Maynooth\\
   email~:\texttt{ fadiceam@gmail.com}}
\date{}
\def\imod#1{\allowbreak\mkern10mu({\operator@font mod}\,\,#1)}
\newcommand\blfootnote[1]{					
\begingroup
\renewcommand\thefootnote{}\footnote{#1}
\addtocounter{footnote}{-1}
\endgroup
}
\begin{document}
\maketitle
\begin{abstract}
The Hausdorff dimension of the set of simultaneously $\tau$--well approximable points lying on a curve defined by a polynomial $P(X) + \alpha$, where $P(X)\in\Z[X]$ and $\alpha \in \R$, is studied when $\tau$ is larger than the degree of $P(X)$. This provides the first results related to the computation of the Hausdorff dimension of the set of well approximable points lying on a curve which is not defined by a polynomial with integer coefficients. 

The proofs of the results also include the study of problems in Diophantine approximation in the case where the numerators and the denominators of the rational approximations are related by some congruential constraint.\blfootnote{MSC(2010)~: 11J83, 11K60}
\end{abstract}

\section{Introduction and statement of the results}

For any manifold $\mathcal{M} \subset \R^2$ and any real number $\tau >1$, denote by $W_{\tau}(\mathcal{M})$ the set of simultaneously $\tau$--well approximable points lying on $\mathcal{M}$, i.e. $$\widehat{W}_{\tau}(\mathcal{M}) = \left\{ (x,y)\in \mathcal{M} \; : \;  \left|x-\frac{p}{q}\right|<\frac{1}{q^{\tau}} \mbox{  and  } \left|y-\frac{r}{q}\right|<\frac{1}{q^{\tau}} \mbox{  i.o.} \right\}.$$ Here and in what follows, \textit{i.o.} stands for \textit{infinitely often}, that is, for infinitely many integers $p$, $r$ and $q$ with $q\ge 1$. 

Even in the simplest case where $\mathcal{M}$ is prescribed to be a planar curve defined by an equation with integer coefficients, the actual Hausdorff dimension $\dim \widehat{W}_{\tau}(\mathcal{M})$ of the set $\widehat{W}_{\tau}(\mathcal{M})$ may exhibit very different behaviours, although the starting point of the computation of the  dimension is generally the same~: it is shown that, if a pair of rationals $(p/q, r/q)$ realizes an approximation of $(x,y)\in\mathcal{M}$ at order $\tau$ as in the definition of the set $\widehat{W}_{\tau}(\mathcal{M})$, then for $\tau$ larger than some constant depending only on the curve, the point $(p/q, r/q)$  has to belong to $\mathcal{M}$ for $q$ large enough. Obviously, the assumption that $\mathcal{M}$ is a curve defined by some equation with \textit{integer} coefficients is then essential. The following two examples illustrate this fact.

\paragraph{}
Consider first, for any integer $l\ge 2$, the Fermat Curve $$\mathcal{F}_l := \left\{ (x,y) \in \R^2 \; : \;  x^l + y^l = 1 \right\}.$$ For $\tau >1$, let $(x,y)\in W_{\tau}(\mathcal{F}_l)$ and let $(p/q, r/q)$ be a pair of rational numbers such that $$x= \frac{p}{q} + \frac{\epsilon_x \theta_x}{q^{\tau}} \;\, \mbox{  and  } \;\, y= \frac{r}{q} + \frac{\epsilon_y \theta_y}{q^{\tau}}$$ with $\epsilon_x, \epsilon_y \in \{\pm 1 \}$ and $\theta_x , \theta_y \in (0,1)$. In particular, $p=O(q)$ and $r=O(q)$ as $q$ tends to infinity. On rearranging the equation $$q^l = \left(p + \frac{\epsilon_x \theta_x}{q^{\tau -1}} \right)^l + \left(r + \frac{\epsilon_y \theta_y}{q^{\tau -1}} \right)^l ,$$ it is readily seen that $$\left| q^{l} - p^l - r^l \right| \le \frac{C(l,x,y)}{q^{\tau-l}},$$ where $C(l,x,y,)$ is a strictly positive constant which depends on $x$, $y$ and $l$, but is independent of $q$. For $\tau > l$ and $q$ large enough, this implies that 
\begin{equation}\label{fermat}
q^l = p^l + r^l,
\end{equation} 
i.e. $(p/q, r/q) \in \mathcal{F}_l$. By Wiles' result on Fermat's Last Theorem \cite{wiles}, the latter equation is not solvable in positive integers as soon as $l \ge 3$. Therefore, if $(x,y)\in \widehat{W}_{\tau}(\mathcal{F}_l)$ ($l\ge 3$), then $(x,y) \in \left\{(1,0) ; (0,1) \right\}$ if $l$ is odd and $(x,y) \in \left\{(\pm 1,0) ; (0,\pm 1) \right\}$ if $l$ is even~: this means that $\widehat{W}_{\tau}(\mathcal{F}_l)$ contains at most four points if $\tau > l\ge 3$. 

In particular, this implies the following result~:

\begin{thm}\label{dimfermat}
For $l\ge 3$ and $\tau >l$, $$\dim \widehat{W}_{\tau}(\mathcal{F}_l) = 0 .$$
\end{thm}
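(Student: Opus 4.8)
The plan is to upgrade the finiteness observation made just above into the dimension statement; since the hard analytic and number-theoretic work is already done, what remains is essentially to argue that a finite set has Hausdorff dimension zero.

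First I would make precise the claim that $\widehat{W}_{\tau}(\mathcal{F}_l)$ is finite. Fix $(x,y) \in \widehat{W}_{\tau}(\mathcal{F}_l)$ and let $\left(p_n/q_n, r_n/q_n\right)$ ($n \geq 1$) be an infinite sequence of distinct rational pairs realising the approximation at order $\tau$. I would observe first that the denominators $q_n$ must be unbounded: were they to lie in a finite set, the constraints $\left|x - p_n/q_n\right| < q_n^{-\tau}$ and $\left|y - r_n/q_n\right| < q_n^{-\tau}$ would confine $(p_n, r_n)$ to a finite set as well, contradicting the infinitude of the sequence. Passing to a subsequence, I may therefore assume $q_n \to \infty$.

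Next, by the computation preceding equation~\eqref{fermat}, for all $n$ large enough the integers satisfy $q_n^l = p_n^l + r_n^l$, so that $\left(p_n/q_n, r_n/q_n\right)$ is a rational point of $\mathcal{F}_l$. By Wiles' theorem \cite{wiles}, for $l \geq 3$ the only such rational points are $(1,0)$ and $(0,1)$ when $l$ is odd, and $(\pm 1, 0)$ and $(0, \pm 1)$ when $l$ is even. Since this is a finite set of values while there are infinitely many indices $n$, a single rational point $(a,b)$ among them must coincide with $\left(p_n/q_n, r_n/q_n\right)$ for infinitely many $n$. For those $n$ one has $|x - a| < q_n^{-\tau} \to 0$ and $|y - b| < q_n^{-\tau} \to 0$, forcing $(x,y) = (a,b)$. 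Hence $\widehat{W}_{\tau}(\mathcal{F}_l)$ is contained in the set of (at most four) rational points listed above.

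Finally, I would conclude by recalling that any finite—indeed any countable—subset of $\R^2$ has Hausdorff dimension zero, whence $\dim \widehat{W}_{\tau}(\mathcal{F}_l) = 0$. The only step demanding a little care is the extraction of a subsequence with $q_n \to \infty$ together with the pigeonhole passage to a single limiting rational point; everything else is immediate from the preparatory discussion. There is thus no genuine obstacle here: the substantive difficulty of the result lies entirely in the reduction to Fermat's equation carried out before the statement and in the appeal to Wiles' theorem, both of which are already in hand.
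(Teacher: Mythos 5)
Your proof is correct and follows essentially the same route as the paper, whose argument is exactly the discussion preceding the theorem: the estimate $\left| q^l - p^l - r^l \right| \le C(l,x,y)/q^{\tau-l}$ forces $q^l = p^l + r^l$ for large $q$, Wiles' theorem then confines the approximating points to at most four rational points of $\mathcal{F}_l$, and a finite set has Hausdorff dimension zero. The only difference is that you spell out the routine details (unboundedness of the $q_n$, subsequence extraction, pigeonhole to a single limit point) that the paper leaves implicit, and these are handled correctly.
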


\begin{rem}\label{hausdcircle}
If $l=2$, (\ref{fermat}) is soluble in infinitely many Pythagorean triples $(p,q,r)$ and the result of Theorem~\ref{dimfermat} is no longer true~: indeed, Dickinson and Dodson~\cite{ddods} have proved that $$\dim \widehat{W}_{\tau}(\mathcal{F}_2) = \frac{1}{\tau}$$ for $\tau >2$, which constituted the first reasonably complete non-trivial result for the Hausdorff dimension of the set $\widehat{W}_{\tau}(\mathcal{M})$ for a smooth manifold $\mathcal{M}$ in $\R^n$ when $\tau$ is larger than the extremal value of $1+1/n$. From their proof, it is also clear that the result holds true for any arc contained in $\mathbb{S}^1=\mathcal{F}_2$.
\end{rem}

\paragraph{}
Consider now the case where the manifold is an integer polynomial curve $$\Gamma = \lbrace (x, P(x))\in \R^2 \; : \; x\in \R\rbrace$$ in $\R^2$, where $P(X)\in\Z[X]$ is a polynomial of degree $d\ge 1$. Since Hausdorff dimension is unaffected under locally bi-Lipschitz transformations~\cite{falco}, it is not difficult to see that $\widehat{W}_{\tau}(\Gamma)$ ($\tau >0$) has the same Hausdorff dimension as the set $$W_{\tau}(P) := \left\{ x \in \R \; : \;  \left|x-\frac{p}{q}\right|<\frac{1}{q^{\tau}} \mbox{  and  } \left|P(x)-\frac{r}{q}\right|<\frac{1}{q^{\tau}} \mbox{  i.o.} \right\}.$$

Working with an appropriate Taylor expansion of $P(X)$, Budarina, Dickinson and Levesley~\cite{base} have proved that, for $\tau >d$, the only rational points which need to be taken into account for the computation of the Hausdorff dimension of the set $W_{\tau}(P)$ actually lie on the polynomial curve under consideration. Their result, which gave impetus to this paper, is the following (see~\cite{base} for the proof)~:

\begin{thm}[Budarina, Dickinson \& Levesley]\label{bbase}
For $\tau> \max\left(d, 2/d \right)$, the Hausdorff dimension of $W_{\tau}(P)$ is $$\dim W_{\tau}(P) = 		 	\frac{2}{d\tau}\cdotp$$
\end{thm}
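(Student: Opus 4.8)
The plan is to compute the dimension in two matching bounds after exploiting the structural reduction quoted above, which for $\tau>d$ guarantees that every rational pair $(p/q,r/q)$ realising a $\tau$-approximation of a point $(x,P(x))$ with $q$ large must satisfy $r/q=P(p/q)$; in particular the second inequality $|P(x)-r/q|<q^{-\tau}$ becomes $|P(x)-P(p/q)|<q^{-\tau}$. First I would pin down the correct scale. Writing $p/q=a/b$ in lowest terms, the point $(a/b,P(a/b))$ has $P(a/b)=M/b^{d}$ with $M=\sum_{i=0}^{d}a_{i}a^{i}b^{d-i}\in\Z$; since $\gcd(a,b)=1$ one has $\gcd(M,b)=\gcd(a_{d},b)$, so the exact (reduced) common denominator of the two coordinates is $\asymp b^{d}$. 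Consequently the admissible centres are exactly the rationals $a/b$ with $\gcd(a,b)=1$, the smallest associated denominator is $q\asymp b^{d}$, and the resonant set attached to each centre is an interval of radius $\asymp b^{-d\tau}$ (away from the finitely many critical points of $P$, where $P'$ is bounded below, the two inequalities cut out comparable intervals).

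For the upper bound $\dim W_{\tau}(P)\le 2/(d\tau)$ I would run the natural covering argument. By the reduction, up to finitely many points $W_{\tau}(P)$ is contained in $\limsup_{b}\bigcup_{a}B(a/b,Cb^{-d\tau})$; since there are at most $b$ admissible numerators $a$ for each denominator $b$, the $s$-dimensional covering sum is dominated by $\sum_{b\ge B}b\cdot b^{-d\tau s}$, which converges as soon as $s>2/(d\tau)$. Letting $B\to\infty$ forces $\mathcal{H}^{s}(W_{\tau}(P))=0$ for every such $s$; here the hypothesis $\tau>2/d$ is what makes the resulting exponent $2/(d\tau)$ genuinely smaller than $1$.

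For the lower bound $\dim W_{\tau}(P)\ge 2/(d\tau)$ I would appeal to the Mass Transference Principle of Beresnevich and Velani. The starting input is Dirichlet's theorem: $\limsup_{b}\bigcup_{\gcd(a,b)=1}B(a/b,b^{-2})$ has full Lebesgue measure. Since $b^{-2}=\bigl(b^{-d\tau}\bigr)^{2/(d\tau)}$, applying the principle on a compact interval with $s=2/(d\tau)$ converts this into $\mathcal{H}^{s}\bigl(\limsup_{b}\bigcup_{a}B(a/b,b^{-d\tau})\bigr)=\infty$. It then remains to check that, for a small enough constant $c$, each interval $B(a/b,cb^{-d\tau})$ is actually contained in $W_{\tau}(P)$: taking $q\asymp b^{d}$ and $r=qP(a/b)$, the first inequality holds for $c\le 1$, while $|P(x)-P(a/b)|\le(\sup_{K}|P'|)\,|x-a/b|$ gives the second once $c\le(\sup_{K}|P'|)^{-1}$. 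This inclusion yields $\dim W_{\tau}(P)\ge 2/(d\tau)$ and completes the proof.

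The main obstacle I anticipate is the bookkeeping of the first step's scale, namely rigorously identifying the reduced common denominator of $(a/b,P(a/b))$ as $\asymp b^{d}$ uniformly (controlling $\gcd(M,b^{d})$ through the leading coefficient $a_{d}$) and confirming that admissible centres occur with the Farey density $\asymp b$ per denominator; this is precisely the congruential constraint $q^{d-1}\mid q^{d}P(p/q)$ flagged in the abstract, and it is what makes the factor $d$ appear in the final dimension while leaving the Jarn\'ik--Besicovitch and Mass Transference machinery otherwise standard.
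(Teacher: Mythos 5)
Your proposal is correct, but a caveat on the comparison: the paper contains no proof of this theorem at all --- it is quoted from \cite{base}, with only the reduction step (``the only rational points which need to be taken into account actually lie on the polynomial curve'') described in prose. So your argument can only be measured against that reduction plus standard machinery, and on those terms it holds up: reduction of both inequalities to rational points on the curve for $\tau>d$, a covering argument for the upper bound, and the Mass Transference Principle fed by Dirichlet--Khintchine for the lower bound is the natural route, and every step you sketch goes through. The one point you rightly flag as the main obstacle does close, but not quite by the congruence you state: $M\equiv a_d a^d \pmod b$ only controls $\gcd(M,b)$, whereas the scale claim needs $\gcd(M,b^d)$ to be bounded. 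This follows from a short valuation argument: for a prime $\pi\mid b$ with $\beta=\nu_{\pi}(b)\ge 1$ and $\alpha=\nu_{\pi}(a_d)$, the term $a_d a^d$ of $M$ has valuation exactly $\alpha$ while all other terms have valuation at least $\beta$; so if $\alpha<\beta$ then $\nu_{\pi}(M)=\alpha\le d\alpha$, and if $\alpha\ge\beta$ then $\min\left(\nu_{\pi}(M),d\beta\right)\le d\beta\le d\alpha$. Hence $\gcd(M,b^d)$ divides $a_d^d$, the reduced denominator of $P(a/b)$ lies in $[b^d/|a_d|^d,\,b^d]$, and the constants in both bounds depend only on $|a_d|^d$ and $\sup_K|P'|$, exactly as you need: in the upper bound, any $q$ with $r/q=P(a/b)$ is a multiple of that reduced denominator, so $q\ge b^d/|a_d|^d$ and $q^{-\tau}\le |a_d|^{d\tau}b^{-d\tau}$; in the lower bound, taking $q$ to be the least common multiple of $b$ and the reduced denominator gives $q\mid b^d$, so $q\le b^d$ and both inequalities hold with $c=\min\left(1,(\sup_K|P'|)^{-1}\right)$.

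Three small tidying remarks. First, the constant $c$ in the radii is harmless for the MTP because $\limsup_b\bigcup_a B\left(a/b,\,c^{s}b^{-2}\right)$ still has full measure by Khintchine's theorem ($\sum_b c^{s}/b=\infty$); and the hypothesis $\tau>2/d$ is what keeps $s=2/(d\tau)$ strictly below $1$, which both the MTP and the nontriviality of your covering exponent require. Second, ``up to finitely many points'' in the upper bound can be sharpened: on a compact interval the threshold $q_0$ beyond which $P(p/q)=r/q$ is uniform, and approximating triples with $q<q_0$ can occur infinitely often only for the countably many rational points lying exactly on the curve, which are negligible for dimension. Third, your parenthetical about $P'$ being bounded below away from its critical points is never used: the upper bound simply discards the second inequality, and the lower bound only needs the upper bound $\sup_K|P'|$, so two-sided comparability of the resonant intervals is not required anywhere.
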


In particular, for any $\tau>0$, the set $W_{\tau}(P)$ is always of positive Hausdorff dimension and therefore contains uncountably many points. 

\paragraph{}
The main result of this paper shows that this no longer holds true in the metric sense as soon as the curve $\Gamma$ is vertically translated by a real number. More precisely, given $\alpha \in \R$, let $W_{\tau}(P_{\alpha})$ denote the set of simultaneously $\tau$--approximable points lying on the polynomial curve $\Gamma_{\alpha} = \lbrace (x, P(x)+\alpha)\in \R^2 \; : \; x\in \R\rbrace$ in $\R^2$, that is, $$W_{\tau}(P_{\alpha}) = \left\{ x \in \R \; : \;  \left|x-\frac{p}{q}\right|<\frac{1}{q^{\tau}} \mbox{  and  } \left|P(x)+\alpha-\frac{r}{q}\right|<\frac{1}{q^{\tau}} \mbox{  i.o.} \right\}.$$ Then the main result proved in this paper reads as follows~:

\begin{thm}\label{princi}
Assume $d\ge 2$. If $\tau > d+1$, then $$W_{\tau}(P_{\alpha}) = \emptyset$$ for almost all $\alpha \in \R$. 
\end{thm}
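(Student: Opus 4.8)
The plan is to regard $\alpha$ as the variable, to fix the approximated point on a compact set, and to show by a convergence Borel--Cantelli argument that the set of shifts $\alpha$ admitting an approximable point is Lebesgue--null. Since $W_{\tau}(P_{\alpha})=\bigcup_m\big(W_{\tau}(P_{\alpha})\cap I_m\big)$ for any countable cover $\R=\bigcup_m I_m$ by bounded intervals, and since a countable union of null sets is null, it suffices to fix a bounded interval $I$ (to contain the approximated $x$) and a bounded interval $J$ (to contain $\alpha$) and to prove that $E_{I,J}:=\{\alpha\in J:\ W_{\tau}(P_{\alpha})\cap I\neq\emptyset\}$ has measure zero. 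Intersecting the resulting full-measure sets over a countable family of products $I\times J$ then yields $W_{\tau}(P_{\alpha})=\emptyset$ for almost every $\alpha$.

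The decisive reduction is that, of the two defining inequalities, it is the \emph{second} one that pins down $\alpha$. Suppose $x\in W_{\tau}(P_{\alpha})\cap I$ and let $(p,r,q)$ satisfy the two inequalities. Writing $P(X)=\sum_{i=0}^d a_iX^i$ with $a_i\in\Z$, the quantity $N:=q^dP(p/q)=\sum_{i=0}^d a_i\,p^i q^{d-i}$ is an integer. By the mean value theorem and the bound $|x-p/q|<q^{-\tau}$, one has $|P(x)-P(p/q)|\le \|P'\|_{\infty,I'}\,q^{-\tau}$ uniformly in $q$, where $I'$ is a fixed bounded neighbourhood of $I$. Substituting into $|P(x)+\alpha-r/q|<q^{-\tau}$ gives
$$\Big|\alpha-\frac{M}{q^d}\Big|<c_0\,q^{-\tau},\qquad M:=rq^{d-1}-N\in\Z,$$
with $c_0=1+\|P'\|_{\infty,I'}$ independent of $q$. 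Thus $x\in W_{\tau}(P_{\alpha})\cap I$ forces $\alpha$ to lie within $c_0q^{-\tau}$ of a rational of denominator $q^d$, for infinitely many $q$ (note that for each fixed $q$ only finitely many triples occur, so an infinitude of triples forces $q\to\infty$).

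Now I would set up the Borel--Cantelli step. Let $J'$ be a fixed bounded neighbourhood of $J$ and put
$$B_q:=\bigcup_{\substack{M\in\Z\\ M/q^d\in J'}}\Big(\frac{M}{q^d}-c_0q^{-\tau},\ \frac{M}{q^d}+c_0q^{-\tau}\Big).$$
The previous paragraph shows $E_{I,J}\subseteq\limsup_{q\to\infty}B_q$ (only large $q$ matter, so the finitely many small $q$ are irrelevant to the $\limsup$). Since the number of admissible $M$ is $O(q^d)$ and each constituent interval has length $2c_0q^{-\tau}$, one obtains $|B_q|=O(q^{d-\tau})$. The hypothesis $\tau>d+1$ then gives $\sum_{q\ge1}|B_q|=O\big(\sum_q q^{d-\tau}\big)<\infty$, so the convergence part of the Borel--Cantelli lemma yields $|\limsup_q B_q|=0$ and hence $|E_{I,J}|=0$, as required.

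The heart of the matter, and the step deserving most care, is the reduction of the second inequality: it is precisely the integrality $q^dP(p/q)\in\Z$, a consequence of $P\in\Z[X]$ having degree $d$, that confines $\alpha$ to a neighbourhood of the $O(q^d)$ rationals of denominator $q^d$ and thereby produces the threshold $d+1$. The remaining points are routine but must be handled carefully: the uniformity of the mean value constant on the compact set $I'$, and the passage from a single $x$ carrying infinitely many approximations to the membership $\alpha\in\limsup_q B_q$. I note finally that the count $O(q^d)$ of relevant centres is generous, since for each $q$ only the $O(q^2)$ values $M=rq^{d-1}-N$ actually arise; exploiting this would widen the admissible range of $\tau$, but the crude estimate already delivers the stated result.
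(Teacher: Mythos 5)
Your argument is correct, and in its main line it is the paper's own: your mean-value reduction is exactly Lemma~\ref{sens1} (your $c_0$ is the paper's $K_M$), and your Borel--Cantelli step is the covering of the set $I^{*}_{\tau}(P)$ of~(\ref{Ietoiletau}) by the unions $J^*_{\tau}(q)$ of~(\ref{jqetoiletau}). The one genuine divergence is the count of centres. The paper keeps the congruential information $b\equiv a_dp^d \pmod{q}$ and estimates the number of admissible numerators via the power-residue machinery ($r_d$, $e_d$, $u_d$, together with the growth of $\omega(q)$ and $\tau(q)$); you discard that information and count all $O(q^d)$ integers $M$ with $M/q^d\in J'$. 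Since the paper's residue count is of size $q^{1+o(1)}$ in any case, both computations yield covers of measure $q^{d-\tau+o(1)}$ and the same threshold $\tau>d+1$; so for Theorem~\ref{princi} alone your trivial count suffices, and the paper's arithmetic machinery is needed only for the complementary statements: the divergence of the series for $\tau\le d+1$, the full-measure result of subsection~\ref{subsec4.2} (Corollary~\ref{infinit}), and the gcd-stratified estimates behind Theorem~\ref{sec}. Your closing observation deserves emphasis, because it is valid and stronger than you suggest: since $p/q$ must lie in $I'$ and $r/q$ in a bounded set determined by $I$, $J$ and $P$, the cover may legitimately be restricted to the $O(q)\times O(q)=O(q^2)$ centres $M=rq^{d-1}-q^dP(p/q)$, giving $\lambda(B_q)=O(q^{2-\tau})$ and hence, by the same convergence Borel--Cantelli argument and a countable union over unit windows, $W_{\tau}(P_{\alpha})=\emptyset$ almost everywhere already for every $\tau>3$ --- strictly stronger than Theorem~\ref{princi} when $d\ge 3$. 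This contradicts no theorem of the paper, but it does undercut the paper's heuristic that $d+1$ is optimal: the full measure of $\tilde{I}_{\tau}(P)$ for $\tau\le d+1$ concerns the delocalized problem, in which the Hensel-lifted solution $p$ of the congruence modulo $q^{d-1}$ need not admit a representative with $p/q$ in any fixed bounded window, so membership in $\tilde{I}_{\tau}(P)$ does not by itself produce points of $W_{\tau}(P_{\alpha})$.
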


Here as elsewhere, \textit{almost all} and \textit{almost everywhere} must be understood in the sense that the set of exceptions has Lebesgue measure zero.

Theorem~\ref{princi} improves a previous result due to Dickinson in~\cite{dettajapon} (Theorem~4), where the weaker bound $3d-1$ was proposed. The method developed in the proof of Theorem~\ref{princi} provides evidence that the bound $d+1$ in the above is in fact optimal. Indeed, it provides an upper bound for the Hausdorff dimension of $W_{\tau}(P_{\alpha})$ valid for almost all $\alpha\in\R$ and for $\tau \in(d, d+1]$ which vanishes when $\tau=d+1$~:

\begin{thm}\label{sec}
Assume $d\ge 2$. If $\tau \in (d, d+1]$, then 
\begin{align}\label{majorwt}
\dim W_{\tau}(P_{\alpha}) \le \frac{d+1-\tau}{\tau}
\end{align}
for almost all $\alpha \in \R$. 
\end{thm}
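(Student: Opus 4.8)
The plan is to produce, for almost every $\alpha$, an explicit cover of the limsup set $W_\tau(P_\alpha)$ and to run the convergence half of the Hausdorff--Cantelli machinery on it. Since Hausdorff dimension is countably stable, I would first fix a bounded interval, say $x\in[0,1]$, so that any witness $(p,r,q)$ of membership has $p/q\in[0,1]$ and $r=O(q)$. For such a witness, the Taylor expansion of $P$ at $p/q$ together with $|x-p/q|<q^{-\tau}$ and $\tau>d\ge 2$ shows that every term of order $\ge 2$ is $O(q^{-2\tau})$ while the linear term is $O(q^{-\tau})$ (as $P'$ is bounded on $[0,1]$); hence $|P(x)-P(p/q)|\le Cq^{-\tau}$, and the second defining inequality collapses to the condition $|P(p/q)+\alpha-r/q|<C'q^{-\tau}$, which no longer involves $x$. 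Because $q^dP(p/q)\in\Z$, this says exactly that $\alpha$ is $C'q^{-\tau}$-close to the rational $r/q-P(p/q)$, whose denominator divides $q^d$: this is the congruential constraint anticipated in the abstract.

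With this reduction, for every $Q$ one has the cover
\[
W_\tau(P_\alpha)\cap[0,1]\ \subseteq\ \bigcup_{q\ge Q}\ \bigcup_{p\in A_q(\alpha)}\left(\tfrac{p}{q}-q^{-\tau},\,\tfrac{p}{q}+q^{-\tau}\right),
\]
where $A_q(\alpha)$ collects the admissible numerators $p\in\{0,\dots,q\}$ for which some integer $r$ satisfies $\|q(P(p/q)+\alpha)\|<C'q^{1-\tau}$, with $\|\cdot\|$ denoting the distance to the nearest integer. Each interval has length $2q^{-\tau}$, so the convergence Borel--Cantelli lemma for Hausdorff measures gives
\[
\mathcal H^s\bigl(W_\tau(P_\alpha)\cap[0,1]\bigr)\ \le\ \liminf_{Q\to\infty}\ \sum_{q\ge Q}\ \#A_q(\alpha)\,(2q^{-\tau})^s .
\]
It therefore suffices to show that for almost every $\alpha$ the series $\sum_q\#A_q(\alpha)\,q^{-\tau s}$ converges as soon as $s>(d+1-\tau)/\tau$; letting $s$ decrease to that threshold and taking a countable union over the unit intervals tiling $\R$ then yields the theorem.

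The heart of the matter, and the step I expect to be hardest, is the estimation of $\#A_q(\alpha)$ for a fixed $\alpha$ and all large $q$. Writing $q^dP(p/q)=\sum_k a_kp^kq^{d-k}$, admissibility of $p$ is a statement about this degree-$d$ polynomial in $p$, reduced modulo $q^{d-1}$ and shifted by $q^d\alpha$, landing in a window of relative size $q^{1-\tau}$; so bounding $\#A_q(\alpha)$ is genuinely a Diophantine-approximation problem for $\alpha$ by the denominator-$q^d$ rationals subject to a polynomial congruence, which is precisely where the degree $d$ enters. I would handle it by estimating, at each level $q$, the Lebesgue measure of the set of $\alpha\in[0,1]$ for which $\#A_q(\alpha)$ exceeds the admissible count dictated by the $q^d$-denominator structure, and then summing these measures over $q$ (grouped in dyadic blocks) so as to invoke the Borel--Cantelli lemma in the variable $\alpha$; this converts the per-level count into an almost-everywhere, uniform-in-$q$ bound that feeds the Hausdorff sum above and produces the threshold $(d+1-\tau)/\tau$. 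The delicate point is that a naive per-$q$ estimate is not summable, so the measure of the exceptional set of $\alpha$ at each scale must be controlled sharply, using the spacing of the admissible rationals and the congruence governing their numerators.
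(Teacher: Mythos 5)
Your opening reduction is sound and is essentially the paper's Section~\ref{sec2}: Lemmas~\ref{sens1} and \ref{sens2} perform exactly the Mean Value Theorem collapse you describe, turning the second inequality into $|\alpha-b/q^d|<Kq^{-\tau}$ with the congruential constraint $b\equiv -q^dP(p/q) \imod{q^{d-1}}$ (hence $b\equiv a_dp^d\imod q$), and the Hausdorff--Cantelli covering framework is also the paper's starting point. The gap is in the step you yourself flag as hardest: your plan to convert per-level measure estimates into ``an almost-everywhere, uniform-in-$q$ bound'' on $\#A_q(\alpha)$ cannot produce the exponent $(d+1-\tau)/\tau$, no matter how sharply the exceptional measures are controlled. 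The reason is structural. For $\tau>d$ and $q$ large there is at most one admissible $b$ per $q$, and the number of admissible $p$ is the number of solutions of $a_dp^d\equiv b\imod q$, which is at most $\gcd(b,q)\,u_d(q)$ with $u_d(q)\le (2d)^{\omega(q)}=q^{o(1)}$ (Proposition~\ref{decompte}, Remark~\ref{remsolutionclasse}); and for almost all $\alpha$, levels with $\gcd(b,q)\ge q^{\epsilon}$ occur infinitely often for every $\epsilon<d+1-\tau$ (this is the divergence side of Lemma~\ref{estimfcterreuretvidem}). So any provable eventual uniform bound must tolerate counts of size a positive power of $q$, and feeding a uniform bound into $\sum_q \#A_q(\alpha)q^{-\tau s}$ \emph{summed over all} $q$ inserts a spurious factor $Q$ from the number of levels: even with the sharpest per-level count one lands at a threshold strictly above $(d+1-\tau)/\tau$ (with the crude uniform bound $\#A_q\ll q^{d+1-\tau+\eta}$ one only gets $s>(d+2-\tau)/\tau$). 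A uniform bound simply cannot encode the key fact that levels with many admissible numerators are correspondingly rare.

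What the paper does instead is stratify by the size of the gcd: it decomposes $R^*_{\tau}(\alpha)$ into the sets $R^*_{\tau}(\alpha,\epsilon,\delta)$ of~(\ref{rtaualhpaetoileepsilondelta}) with $q^{\epsilon}\le\gcd(b,q)<q^{\epsilon+\delta}$, computes the measure of the corresponding level sets as $\asymp q^{d-\tau-\epsilon+o(1)}$ using the multiplicativity of the power-residue counts $r_d,e_d,u_d$ (Lemma~\ref{multiruedq}, Proposition~\ref{decompte}), and then applies Sprind\v{z}uk's almost-everywhere counting theorem (Theorem~\ref{estimnaalphaerreur} via Corollary~\ref{equivnqalpha}) to get that for a.e.\ $\alpha$ the number of active levels $q\le Q$ in the stratum is $\asymp Q^{d+1-\tau-\epsilon}$ up to $q^{o(1)}$, i.e.\ the $n$-th active denominator satisfies $q_n\gg n^{1/(d+1-\tau-\epsilon+\gamma)}$. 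In the Hausdorff sum $\sum_n c_nq_n^{-\tau s}$ with $c_n\le q_n^{\epsilon+\delta+o(1)}$ the parameter $\epsilon$ \emph{cancels}, and every stratum yields the same threshold $(d+1-\tau+\delta)/\tau$ (Corollary~\ref{majdimhausrtauetoiledeltaepsilon}); the boundary stratum $\epsilon=d+1-\tau$ needs the separate counting estimate of Lemma~\ref{fctcomptagecritique}, and a countable subdivision in $(\epsilon,\delta)$ with $\delta\to 0$ finishes the proof. Your dyadic Borel--Cantelli idea is not wasted --- a first-moment argument along dyadic blocks can indeed replace Sprind\v{z}uk's theorem for the needed \emph{upper} bound on the stratified counting function --- but without the gcd stratification and the resulting inverse correlation between the size of $\#A_q(\alpha)$ and the frequency of such $q$, your argument stalls strictly short of the claimed bound.
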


The relevance of the result of Theorem~\ref{sec} also appears clearly when compared with the following one, proved by Vaughan and Velani~\cite{planarbis}.
\begin{thm}[Vaughan \& Velani] \label{bdv}
Let $f$ be a three times continuously differentiable function defined on an interval $I$ of $\R$ and $\mathcal{C}_f := \left\{ (x, f(x))\in \R^2 \; : \; x\in I \right\}.$ Let $\tau \in [3/2 , 2)$ be given. Assume that $\dim \left\{ x\in I \; : \; f''(x)=0\right\}\le (3-\tau)/\tau .$ Denote by $W_{\tau}(f)$ the set of simultaneously $\tau$--well approximable points in $\R^2$ lying on the curve $\mathcal{C}_f$. Then $$\dim W_{\tau}(f) = \frac{3-\tau}{\tau} =: s.$$ Moreover, if $\tau \in (3/2 , 2)$, then the $s$--Hausdorff measure of the set $W_{\tau}(f)$ is infinite.
\end{thm}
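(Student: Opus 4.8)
The plan is to realise $W_{\tau}(f)$ as a $\limsup$ set and to control it through the distribution of rational points lying close to the curve $\mathcal{C}_f$. For a triple $(p,r,q)$ with $q\ge 1$, set
$$\Delta_q(p,r) \;=\; \left\{ x\in I \; : \; \left|x-\frac{p}{q}\right|<\frac{1}{q^{\tau}} \; \mbox{ and } \; \left|f(x)-\frac{r}{q}\right|<\frac{1}{q^{\tau}} \right\},$$
so that $W_{\tau}(f)=\bigcap_{N\ge 1}\bigcup_{q\ge N}\bigcup_{p,r}\Delta_q(p,r)$. The upper bound for $\dim W_{\tau}(f)$ will be obtained from an upper estimate on the number of triples for which $\Delta_q(p,r)\neq\emptyset$, combined with the natural cover of a $\limsup$ set (the Hausdorff--Cantelli lemma); the lower bound and the divergence part of the Hausdorff measure statement will come from a matching lower estimate fed into a ubiquity argument.

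For the upper bound I would first restrict attention to the part of $I$ on which the curvature is bounded away from zero. Fixing $\eta>0$ and working on $I_\eta := \{x \in I : |f''(x)|\ge \eta\}$, a Taylor expansion at $p/q$ shows that $\Delta_q(p,r)\neq\emptyset$ forces the rational point $(p/q,r/q)$ to lie within $\asymp q^{-\tau}$ of $\mathcal{C}_f$ in each coordinate. The crucial step is then to prove that, at a dyadic scale $q\asymp Q$, the number $N(Q)$ of admissible triples satisfies $N(Q)\ll Q^{3-\tau}$: for each of the $\ll Q$ choices of $q$ and $\ll Q$ choices of $p$, the integer $r$ must lie in an interval of length $\asymp qQ^{-\tau}\asymp Q^{1-\tau}<1$, and the lower bound on $|f''|$ controls how often $qf(p/q)$ falls that close to an integer. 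Since each $\Delta_q(p,r)$ is contained in an interval of length $\ll Q^{-\tau}$, summing $(\mathrm{diam}\,\Delta_q(p,r))^{s'}$ over all admissible triples with $q\ge N$ gives $\ll \sum_{Q\ge N} Q^{3-\tau}\,Q^{-\tau s'}$, a tail that tends to $0$ as $N\to\infty$ precisely when $s'>(3-\tau)/\tau=s$. Hence $\mathcal{H}^{s'}(W_{\tau}(f)\cap I_\eta)=0$ for every $s'>s$, so $\dim(W_{\tau}(f)\cap I_\eta)\le s$. The remaining contribution, coming from a shrinking neighbourhood of the inflection set $\{f''=0\}$, is absorbed by the hypothesis $\dim\{x\in I:f''(x)=0\}\le s$, and letting $\eta\to 0$ yields $\dim W_{\tau}(f)\le s$.

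For the reverse inequality and the assertion $\mathcal{H}^{s}(W_{\tau}(f))=\infty$ when $\tau\in(3/2,2)$, I would establish the complementary lower bound $N(Q)\gg Q^{3-\tau}$ together with a spacing statement showing that the admissible centres $p/q$ are spread across $I_\eta$ at the natural separation $\asymp Q^{-(3-\tau)}$. This is exactly the data needed to check that these resonant points form a locally ubiquitous system with ubiquity function $\rho(Q)\asymp Q^{-(3-\tau)}$. Since the approximating neighbourhoods have radius $q^{-\tau}$, the ubiquity framework of Beresnevich, Dickinson and Velani then delivers the Hausdorff measure of the $\limsup$ set, the relevant exponent being $(3-\tau)/\tau=s$; this gives infinite $\mathcal{H}^s$--measure throughout the open range $\tau\in(3/2,2)$ and, a fortiori, $\dim W_{\tau}(f)\ge s$. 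At the endpoint $\tau=3/2$ one has $s=1$, so that $\mathcal{H}^1$ is comparable to Lebesgue measure and the claim reduces to $\dim W_{\tau}(f)=1$; there the lower bound is furnished by a divergence Borel--Cantelli argument with quasi-independence on the curve rather than by the measure statement.

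The main obstacle is the counting estimate $N(Q)\asymp Q^{3-\tau}$ in both directions. The upper bound requires controlling how frequently $qf(p/q)$ lands within $Q^{1-\tau}$ of an integer as $p$ varies, which is genuinely an equidistribution/exponential-sum problem and is where the curvature hypothesis $f''\neq 0$ enters decisively; the lower bound, together with the accompanying spacing information, demands a quantitative, well-distributed version of the same count. A secondary difficulty is the careful treatment of the inflection set: one must ensure that the neighbourhood of $\{f''=0\}$ discarded in the upper-bound step does not destroy the ubiquity needed for the lower bound, which is precisely what the dimensional hypothesis on $\{f''=0\}$ guarantees.
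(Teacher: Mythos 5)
The first thing to note is that the paper contains no proof of this statement: Theorem~\ref{bdv} is imported as background from the literature, the convergence (upper bound) half being the main theorem of Vaughan and Velani in~\cite{planarbis} and the divergence half --- the lower bound together with the infinitude of the $s$--Hausdorff measure --- resting on the ubiquity machinery of Beresnevich, Dickinson and Velani, exactly as you say. So the comparison must be with those external proofs, and your skeleton does reproduce their architecture faithfully: restrict to $I_\eta=\{|f''|\ge\eta\}$, cover the $\limsup$ set via a count of rational points $(p/q,r/q)$ lying within $\asymp q^{-\tau}$ of the curve, apply the Hausdorff--Cantelli covering argument for the upper bound, and feed a matching lower count with spacing into local ubiquity for the lower bound; the treatment of $\{f''=0\}$ by countable stability of dimension is also correct. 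The genuine gap is that the two--sided counting estimate $N(Q)\asymp Q^{3-\tau}$ --- equivalently $N(Q)\asymp\psi(Q)Q^{2}$ with $\psi(Q)=Q^{1-\tau}$ --- is not an auxiliary step but the \emph{entire analytic content} of the theorem, and you explicitly leave it open as ``the main obstacle''. The upper bound requires showing that $qf(p/q)$ is within $Q^{1-\tau}$ of an integer essentially no more often than the probabilistic heuristic predicts, which Vaughan and Velani prove by Fourier--analytic and exponential--sum methods exploiting $|f''|\ge\eta$; no pigeonhole or soft equidistribution argument suffices in the range $\tau\in(3/2,2)$, i.e.\ $Q^{-1}<\psi(Q)\le Q^{-1/2}$. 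The lower count with the well--spread resonant points needed for ubiquity is likewise a separate nontrivial theorem. With both assumed, what you have is an accurate roadmap of the known proof rather than a proof.

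Two smaller points. First, at the endpoint $\tau=3/2$ you invoke a divergence Borel--Cantelli argument with quasi--independence, but none is needed: there $s=1$, and by the two--dimensional Dirichlet theorem \emph{every} point of $\R^2$ is simultaneously $3/2$--approximable, so $W_{3/2}(f)$ is the whole of $\mathcal{C}_f$ and trivially has dimension $1$ --- the paper itself uses precisely this observation to get $\dim W_{\tau}(\mathbb{S}^1)=1$ for $\tau\le 3/2$. Second, your covering sum $\sum_{Q\ge N}Q^{3-\tau}Q^{-\tau s'}$ as written converges only when $3-\tau-\tau s'<-1$; since $N(Q)$ counts the dyadic block $q\asymp Q$, the sum must run over $Q=2^k$, which gives a geometric series converging exactly for $s'>(3-\tau)/\tau$, as intended. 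Relatedly, the implied constants in the counting bound blow up as $\eta\to 0$; this is harmless for the dimension statement via the countable union over $\eta=1/k$, but it is worth making explicit that the exceptional neighbourhood of $\{f''=0\}$ is handled by the hypothesis $\dim\{f''=0\}\le s$ and not by the counting estimate itself.
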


Now if the degree of the polynomial $P(X)$ equals $d=2$, then the upper bound for the Hausdorff dimension of $W_{\tau}(P_{\alpha})$ given by~(\ref{majorwt}) for almost all $\alpha\in\R$ and for $\tau$ lying in the interval $(2, 3]$ has the same expression as the exact value of $\dim W_{\tau}(P_{\alpha})$ provided by Theorem~\ref{bdv}, which is valid for all $\alpha\in\R$ and for $\tau\in (3/2, 2)$.

\paragraph{}
Theorems~\ref{princi} and \ref{sec} seem to provide the first results related to the study of the Hausdorff dimension of the set of well approximable points lying on a curve which is not defined by a polynomial with integer coefficients. Besides this fact, the method involved in the proofs is also interesting in its own right since it includes the study of problems of Diophantine approximation by rationals whose numerators and denominators are related by some congruential constraint.  

It should also be emphasized that Theorems~\ref{princi} and \ref{sec} may easily be generalized to the case of a general decreasing approximating function $\Psi : \R_+ \rightarrow \R_+$ which tends to zero at infinity~: to this end, denote by $W_{\Psi}(P_{\alpha})$ the set of $\Psi$--well approximable points lying on the curve defined by the polynomial $P(X)+\alpha$ in such a way that $W_{\tau}(P_{\alpha})$ is the set $W_{\Psi}(P_{\alpha})$ with $\Psi(q)=q^{-\tau}$.  Let $\lambda_{\Psi}$ be the \emph{lower order} of $1/\Psi$, that is, $$\lambda_{\Psi}:= \liminf \frac{-\log \Psi (q)}{\log q} \textrm{ as } q \rightarrow \infty.$$ The lower order $\lambda_{\Psi}$ indicates the growth of the function $1/\Psi$ in a neighborhood of infinity. Note that this quantity is always positive since $\Psi$ tends to zero at infinity. With this notation at one's disposal, the generalization of 
Theorems~\ref{princi} and \ref{sec} may be stated as follows~:

\begin{coro}
Assume $d\ge 2$. If $\lambda_{\Psi} > d+1$, then $$W_{\Psi}(P_{\alpha}) = \emptyset$$ for almost all $\alpha \in \R$. Furthermore, if $\lambda_{\Psi}\in (d, d+1]$, then $$\dim W_{\Psi}(P_{\alpha}) \le \frac{d+1-\lambda_{\Psi}}{\lambda_{\Psi}}$$ for almost all $\alpha\in\R$.
\end{coro}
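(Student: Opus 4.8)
The plan is to deduce the Corollary directly from Theorems~\ref{princi} and~\ref{sec} by comparing the general approximating function $\Psi$ with the power functions $q\mapsto q^{-\tau}$. The bridge between the two settings is an elementary consequence of the definition of the lower order: for every $\tau<\lambda_{\Psi}$, one has $\Psi(q)\le q^{-\tau}$ for all $q$ large enough. Indeed, were this to fail there would be an increasing sequence $q_n\to\infty$ along which $-\log\Psi(q_n)/\log q_n<\tau$, forcing $\lambda_{\Psi}\le\tau$, a contradiction.

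First I would record the inclusion this yields. Fix $\tau<\lambda_{\Psi}$ and let $Q_{\tau}$ be such that $\Psi(q)\le q^{-\tau}$ whenever $q\ge Q_{\tau}$. If $x\in W_{\Psi}(P_{\alpha})$, then there are infinitely many triples $(p,r,q)$ realising the $\Psi$--approximation; since for each fixed $q$ the conditions $|x-p/q|<\Psi(q)$ and $|P(x)+\alpha-r/q|<\Psi(q)$ confine $p$ and $r$ to intervals of finite length and hence allow only finitely many pairs $(p,r)$, necessarily $q\to\infty$ along this family. Consequently infinitely many of these triples satisfy $q\ge Q_{\tau}$, so that $|x-p/q|<q^{-\tau}$ and $|P(x)+\alpha-r/q|<q^{-\tau}$ hold infinitely often. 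This proves
\[
W_{\Psi}(P_{\alpha})\subseteq W_{\tau}(P_{\alpha})\qquad\text{for every }\tau<\lambda_{\Psi}.
\]

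The two assertions now follow. If $\lambda_{\Psi}>d+1$, choose any $\tau\in(d+1,\lambda_{\Psi})$; the inclusion gives $W_{\Psi}(P_{\alpha})\subseteq W_{\tau}(P_{\alpha})$, and Theorem~\ref{princi} yields $W_{\tau}(P_{\alpha})=\emptyset$ for almost every $\alpha$, whence $W_{\Psi}(P_{\alpha})=\emptyset$ for almost every $\alpha$. If instead $\lambda_{\Psi}\in(d,d+1]$, pick a sequence $\tau_n\uparrow\lambda_{\Psi}$ with each $\tau_n\in(d,d+1]$. For every $n$, Theorem~\ref{sec} provides a Lebesgue--null set $N_n\subset\R$ such that $\dim W_{\tau_n}(P_{\alpha})\le(d+1-\tau_n)/\tau_n$ for all $\alpha\notin N_n$. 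Setting $N=\bigcup_n N_n$, which is again null, one obtains for every $\alpha\notin N$ and every $n$
\[
\dim W_{\Psi}(P_{\alpha})\le\dim W_{\tau_n}(P_{\alpha})\le\frac{d+1-\tau_n}{\tau_n}.
\]
Since $\tau\mapsto(d+1-\tau)/\tau$ is continuous and strictly decreasing, letting $n\to\infty$ gives the announced bound $\dim W_{\Psi}(P_{\alpha})\le(d+1-\lambda_{\Psi})/\lambda_{\Psi}$ for every $\alpha\notin N$.

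I would expect no serious difficulty here, as the argument is a soft comparison resting entirely on the two power-law theorems. The only point demanding care is the treatment of the \emph{almost all} quantifier in the second part: the exceptional set of shifts supplied by Theorem~\ref{sec} depends on the exponent, so it must be assembled as a countable union over the approximating sequence $\tau_n$ before passing to the limit. Working with a single $\tau<\lambda_{\Psi}$ would only produce the strictly weaker bound $(d+1-\tau)/\tau$, and it is precisely the monotonicity of this quantity together with the null-set bookkeeping that recovers the optimal exponent $\lambda_{\Psi}$ in the limit.
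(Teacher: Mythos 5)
Your proof is correct and takes essentially the same route as the paper: from the definition of the lower order, $\Psi(q)\le q^{-\tau}$ for all large $q$ whenever $\tau<\lambda_{\Psi}$, giving $W_{\Psi}(P_{\alpha})\subseteq W_{\tau}(P_{\alpha})$, after which both assertions follow from Theorems~\ref{princi} and~\ref{sec}. You merely make explicit two details the paper leaves implicit --- that $q$ must tend to infinity along the approximating triples, and the countable-union bookkeeping of the exceptional null sets before letting $\tau_n$ increase to $\lambda_{\Psi}$ --- both of which are handled correctly.
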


\begin{proof}
From the definition of the lower order $\lambda_{\Psi}$, it is readily verified that, for any $\epsilon >0$, $$\Psi(q)\le q^{-\lambda_{\Psi}+\epsilon} \textrm{ for all but finitely many } q\in\N^*.$$ Therefore, for any $\epsilon >0$, $$W_{\Psi}(P_{\alpha}) \subset W_{\lambda_{\Psi}-\epsilon}(P_{\alpha}).$$ The corollary then follows easily from Theorem~\ref{princi} and Theorem~\ref{sec}.
\end{proof}

\paragraph{}
The paper is organized as follows~: the problem of simultaneous Diophantine approximation under consideration is first reduced to a problem of Diophantine approximation concerning the quality of approximation of the real number $\alpha$ by rational numbers whose numerators and denominators are related by some congruential constraint (section~\ref{sec2}). The auxiliary lemmas collected in section~\ref{sec3} shall be needed in the course of the proofs of Theorem~\ref{princi} (section~\ref{sec4}) and Theorem~\ref{sec} (section~\ref{sec5}). Some remarks on the results and the method developed shall conclude the paper (section~\ref{sec6}).

For details about Hausdorff dimension and the proof of some of its basic properties which shall be used throughout, the reader is referred to~\cite{falco}.

\paragraph{}
Since the set $W_{\tau}\left( P_{\alpha}\right)$ is invariant when the real number $\alpha$ is translated by an integer, it shall be assumed throughout, without loss of generality, that $\alpha$ lies in the unit interval $[0,1]$. Once and for all, $P(X)\in\Z[X]$ is a fixed polynomial of degree $d\ge 2$ whose leading coefficient shall be denoted by $-a_d\in\Z^*$ for convenience.

\paragraph{\textbf{\large{Notation}}\\}
The following notation shall be used throughout~:

\begin{itemize}
\item $ \lfloor x \rfloor$ (resp. $\left\lceil{x}\right\rceil$), $x \in \R$~: the integer part of $x$ (resp. the smallest integer not less than $x$).
\item $(x)_{+} := \max\left\{0, x \right\}$ $(x\in\R)$.
\item $f \ll g$ (resp. $f \gg g$)~: notation equivalent to $f=O(g)$ (resp. $g=O(f)$).
\item $ \llbracket x , y \rrbracket$ ($x, y\in\R$, $x \le y$)~: interval of integers, i.e. $\llbracket x , y \rrbracket = \left\{ n\in\Z \; : \; x\le n \le y\right\}$.
\item $\lambda$~: the Lebesgue measure on the real line (or its restriction to the unit interval). 
\item $\textrm{Card}(X)$ or $|X|$~: the cardinality of a finite set $X$.
\item $A^{\times}$~: the set of invertible elements of a ring $A$.
\item $M^* = M\backslash \left\lbrace 0 \right\rbrace$ for any monoid $M$ with identity element $0$.
\item $\pr$ (resp. $\pi$, $\nu_{\pi}(q)$ for $q\in \N^*$)~: the set of prime numbers (resp. any prime number, the $\pi$--adic valuation of $q$).
\item $\varphi (n)$~: Euler's totient function.
\item $\tau (n)$~: the number of divisors of a positive integer $n$.
\item $\omega (n)$~: the number of distinct prime factors dividing an integer $n\ge 2$ ($\omega (1)=0$).
\item $\| f \|_{\infty}^{I}$~: the infinity norm of a continuous function $f$ over a bounded interval $I \subset \R$, i.e. $\| f \|_{\infty}^{I}= \underset{x\in I}{\sup}|f(x)|.$ 
\item $G_d\left(q\right)$ ($d,q \ge 1$ integers)~: the set of $d^{th}$ powers modulo $q$.
\item $aG_d\left(q\right) := \left\{ am \; : \; m\in G_d\left(q\right)\right\}$ ($d,q \ge 1$ integers, $a\in\Z/q\Z$).
\end{itemize}

\section{From the simultaneous case to Diophantine approximation under constraint} \label{sec2}

In this section, simultaneous approximation properties of a real number $x$ and of $P(x)+\alpha$ are linked to some properties of Diophantine approximation under a constraint of the real number $\alpha$, and conversely. The aforementioned constraint implies the resolution of a congruence equation involving the polynomial $P(X)$. This section is the key step to the proof of Theorems~\ref{princi} and \ref{sec}.

\subsection{Reduction of the problem}\label{reducprob}

Let $M$ be an integer and $W_{\tau}^{M}\left( P_{\alpha}\right) = W_{\tau}\left( P_{\alpha}\right) \cap \left[ M , M+1 \right]$, i.e. $$W_{\tau}^{M}\left( P_{\alpha}\right) = \left\{ x \in \left[ M , M+1 \right] \; : \;  \left|x-\frac{p}{q}\right|<\frac{1}{q^{\tau}} \mbox{  and  } \left|P(x)+\alpha-\frac{r}{q}\right|<\frac{1}{q^{\tau}} \mbox{  i.o.} \right\}.$$ It is clear that $$W_{\tau}\left( P_{\alpha}\right) = \bigcup_{M\in\Z} W_{\tau}^{M}\left( P_{\alpha}\right).$$ 

In order to compute the Hausdorff dimension of the set $W_{\tau}\left( P_{\alpha}\right)$, it is more convenient to first focus on the subsets $W_{\tau}^{M}\left( P_{\alpha}\right)$. To this end, the following two lemmas are needed. Recall that $d:=\deg P$.

\begin{lem}\label{sens1}
Let $\tau >0$ and $x \in \left[ M , M+1 \right]$ such that there exist rational numbers $p/q$ and $r/q$ satisfying $$
        x-\frac{p}{q}= \frac{\theta_x \epsilon_x}{q^{\tau}} \quad \mbox{ and } \quad
        P(x)+\alpha-\frac{r}{q} = \frac{\theta_y \epsilon_y}{q^{\tau}},
$$
with $\theta_x, \theta_y \in (0,1)$ and $\epsilon_x , \epsilon_y \in \left\{\pm 1 \right\}.$

Then $$\left|\alpha - \frac{rq^{d-1}-q^d P\left(\frac{p}{q} \right)}{q^d} \right| < \frac{K_M}{q^{\tau}},$$ where $K_M := 1+ \| P' \|_{\infty}^{\left[ M , M+1 \right]}.$
\end{lem}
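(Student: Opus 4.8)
The plan is to recognise that the somewhat opaque rational $\frac{rq^{d-1}-q^d P\left(\frac pq\right)}{q^d}$ is nothing but $\frac{r}{q}-P\left(\frac pq\right)$, obtained by dividing the numerator and denominator through by $q^d$. The quantity to be estimated is therefore $\left|\alpha-\frac rq+P\left(\frac pq\right)\right|$. First I would use the second hypothesis to express $\alpha$ in terms of $x$: from $P(x)+\alpha-\frac rq=\frac{\theta_y\epsilon_y}{q^\tau}$ one reads off $\alpha-\frac rq=\frac{\theta_y\epsilon_y}{q^\tau}-P(x)$, whence
\begin{equation*}
\alpha-\frac{rq^{d-1}-q^dP\left(\frac pq\right)}{q^d}=\left(P\left(\frac pq\right)-P(x)\right)+\frac{\theta_y\epsilon_y}{q^\tau}.
\end{equation*}
This reduces the whole problem to controlling how much $P$ moves between the two nearby points $x$ and $p/q$.

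Next I would apply the triangle inequality together with the mean value theorem. Since $\theta_y\in(0,1)$, the last term contributes strictly less than $\frac{1}{q^\tau}$ in absolute value. For the difference $P\left(\frac pq\right)-P(x)$, the mean value theorem produces a point $\xi$ between $x$ and $p/q$ with $P\left(\frac pq\right)-P(x)=P'(\xi)\left(\frac pq-x\right)$, and the first hypothesis gives $\left|\frac pq-x\right|=\frac{\theta_x}{q^\tau}<\frac{1}{q^\tau}$. Bounding $|P'(\xi)|$ by $\|P'\|_{\infty}^{[M,M+1]}$ and summing the two contributions then yields exactly
\begin{equation*}
\left|\alpha-\frac{rq^{d-1}-q^dP\left(\frac pq\right)}{q^d}\right|<\frac{\|P'\|_{\infty}^{[M,M+1]}+1}{q^\tau}=\frac{K_M}{q^\tau},
\end{equation*}
the strict inequality being inherited from $\theta_x,\theta_y\in(0,1)$.

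The only genuinely delicate point is the step bounding $|P'(\xi)|$ by the supremum of $|P'|$ over $[M,M+1]$. Since $x\in[M,M+1]$ and $p/q$ differs from $x$ by at most $\theta_x/q^\tau$, the intermediate point $\xi$ is guaranteed to lie in $[M,M+1]$ precisely when $p/q$ does; if $x$ sits at an endpoint and the sign $\epsilon_x$ pushes $p/q$ slightly outside, then $\xi$ may leave $[M,M+1]$ by a distance at most $1/q^\tau$. I would handle this either by noting that in the applications of interest $q$ is large, so that $p/q$ lies in $[M,M+1]$ and $\xi$ with it, or by absorbing the harmless boundary overshoot into the additive constant $1$ already built into $K_M$, exploiting that $P'$ is a polynomial and hence uniformly continuous on any compact neighbourhood of $[M,M+1]$. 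Apart from this bookkeeping, the statement follows from a direct combination of the two defining equalities with a single application of the mean value theorem.
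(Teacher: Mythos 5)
Your proof is correct and follows essentially the same route as the paper: rewrite the target fraction as $\frac{r}{q}-P\left(\frac{p}{q}\right)$, substitute the hypothesis on $P(x)+\alpha$, and control $P(x)-P\left(\frac{p}{q}\right)$ by a single application of the Mean Value Theorem, with the strict inequality coming from $\theta_x,\theta_y\in(0,1)$. The boundary subtlety you flag (the intermediate point possibly escaping $[M,M+1]$ when $p/q$ does) is silently glossed over in the paper's own proof, and your observation that it is harmless in the applications --- where $q$ is large and the constant $K_M$ is in any case absorbed when passing from $\tau$ to $\tau-\epsilon$ --- is a fair, indeed slightly more careful, resolution.
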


\begin{proof}
The proof is straightforward~: first notice that $$\alpha+P(x)-P(x) - \frac{rq^{d-1}-q^d P\left(\frac{p}{q} \right)}{q^d} = \frac{\theta_y \epsilon_y}{q^{\tau}} - \left(P(x)-P\left(\frac{p}{q} \right) \right).$$ Now, by the Mean Value Theorem, there exists a point $c$ in $\left(x,\frac{p}{q} \right)$ such that $$P(x)-P\left(\frac{p}{q} \right)  = P'\left(c \right)\frac{\theta_x \epsilon_x}{q^{\tau}}\cdotp$$ Therefore, $$\alpha - \frac{rq^{d-1}-q^d P\left(\frac{p}{q} \right)}{q^d} = \frac{\theta_y \epsilon_y - P'\left(c \right)\theta_x \epsilon_x}{q^{\tau}},$$ which proves the lemma.
\end{proof}

The next result provides a partial converse to Lemma~\ref{sens1}. Here again, $K_M := 1+ \|P' \|_{\infty}^{\left[ M , M+1 \right]}$~:

\begin{lem}\label{sens2}
Let $b$ and $q\ge 1$ integers such that there exists an integer  $p\in \llbracket Mq , (M+1)q \rrbracket$ satisfying $$\frac{b}{q^d}+P\left(\frac{p}{q} \right) \in \frac{\Z}{q}:= \left\{\frac{a}{q}  \; : \; a\in \Z \right\}.$$ Assume furthermore that $$\alpha -\frac{b}{q^d} = \frac{\epsilon \theta K_M}{q^{\tau}},$$ where $\theta \in (0,1)$ and $\epsilon \in \left\{\pm 1 \right\}.$

Then there exists $r\in\Z$ such that for any $x \in \left(\frac{p}{q}-\frac{1}{q^{\tau}} ,\frac{p}{q}+\frac{1}{q^{\tau}}\right)$, $$\left|P(x)+\alpha - \frac{r}{q} \right| < \frac{2K_M}{q^{\tau}}\cdotp$$
\end{lem}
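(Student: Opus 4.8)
The plan is to let the congruence hypothesis \emph{define} the integer $r$, and then to estimate $P(x)+\alpha-r/q$ by separating it into a ``polynomial part'' and an ``$\alpha$-part''. Concretely, since by assumption $\frac{b}{q^d}+P(p/q)$ lies in $\frac{\Z}{q}$, I would first introduce the integer $r\in\Z$ characterised by
$$\frac{r}{q}=\frac{b}{q^d}+P\!\left(\frac{p}{q}\right).$$
That such an integer exists is precisely the content of the hypothesis, and it depends only on $b$, $q$ and $p$, not on the particular $x$. This is the candidate $r$ of the statement.

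With this choice, for any $x$ in the prescribed interval I would write
$$P(x)+\alpha-\frac{r}{q}=\Bigl(P(x)-P\!\left(\tfrac{p}{q}\right)\Bigr)+\Bigl(\alpha-\frac{b}{q^d}\Bigr),$$
so that the problem splits into the control of two terms. The second term is handled immediately by the standing assumption $\alpha-\frac{b}{q^d}=\frac{\epsilon\theta K_M}{q^\tau}$: since $\theta\in(0,1)$, its modulus is strictly smaller than $K_M/q^\tau$. For the first term I would argue exactly as in the proof of Lemma~\ref{sens1}, via the Mean Value Theorem: there is a point $c$ lying between $x$ and $p/q$ with $P(x)-P(p/q)=P'(c)(x-p/q)$, and since $x\in\left(\frac{p}{q}-\frac{1}{q^\tau},\frac{p}{q}+\frac{1}{q^\tau}\right)$ one has $|x-p/q|<1/q^\tau$. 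Because $p\in\llbracket Mq,(M+1)q\rrbracket$ forces $p/q\in[M,M+1]$, the point $c$ lies in (a neighbourhood of) $[M,M+1]$, whence $|P'(c)|\le\|P'\|_\infty^{[M,M+1]}<K_M$ and therefore $|P(x)-P(p/q)|<K_M/q^\tau$. Combining the two estimates by the triangle inequality yields $\left|P(x)+\alpha-\frac{r}{q}\right|<2K_M/q^\tau$, as required.

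The one point needing care — and the place I expect to be the only genuine obstacle — is that the intermediate point $c$ may slip just outside $[M,M+1]$ when $p/q$ sits at an endpoint of that interval, so that $|P'(c)|$ is a priori controlled only by the supremum of $|P'|$ over the slightly enlarged interval $[M-q^{-\tau},M+1+q^{-\tau}]$. This is precisely what the surplus $1$ in the definition $K_M=1+\|P'\|_\infty^{[M,M+1]}$ is there to absorb: by continuity of $P'$ this enlarged supremum still falls strictly below $K_M$ in the regime of large $q$ relevant to the application, keeping the strict inequality $|P'(c)|<K_M$ intact and hence preserving the final bound.
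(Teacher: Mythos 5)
Your proposal is correct and follows essentially the same route as the paper's proof: define $r$ by $\frac{r}{q}=\frac{b}{q^d}+P\left(\frac{p}{q}\right)$, split $P(x)+\alpha-\frac{r}{q}$ by the triangle inequality into $P(x)-P\left(\frac{p}{q}\right)$ plus $\alpha-\frac{b}{q^d}$, and control the first term via the Mean Value Theorem and the second via $\theta\in(0,1)$. The only divergence is your closing caveat about the intermediate point $c$ possibly slipping outside $[M,M+1]$ --- a subtlety the paper silently elides by bounding $|P'(c)|$ directly by $\|P'\|_{\infty}^{[M,M+1]}$ --- and your observation that the slack $1$ in $K_M$ absorbs it is a legitimate (indeed slightly more careful) patch, harmless here since the lemma is only ever invoked in the ``infinitely often'' regime of large $q$.
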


\begin{proof}
Let $r\in \Z$ be such that $$\frac{b}{q^d}+P\left(\frac{p}{q} \right) = \frac{r}{q}\cdotp$$
By the triangle inequality and the Mean Value Theorem, 
\begin{align*}
\left|P(x)+\alpha - \frac{r}{q} \right| & \le \left|P(x)-P\left(\frac{p}{q} \right)\right| + \left|\alpha - \frac{b}{q^d} \right|\\
&\le \frac{\| P' \|_{\infty}^{\left[ M , M+1 \right]}}{q^{\tau}} +\frac{\theta K_M}{{q^{\tau}} },
\end{align*}
hence the Lemma.
\end{proof}

\paragraph{}
For any integer $M$ and any real number $K>0$, let 
\begin{equation}\label{rtalphaM}
R_{\tau}^{M}(\alpha)\! \left[ K \right] := \left\{ x \in \left[ M , M+1 \right] \; : \;
\begin{split}
&  \left|x-\frac{p}{q}\right|<\frac{1}{q^{\tau}}\quad \mbox{  and  } \\
&  \left|\alpha-\frac{b}{q^d}\right|<\frac{K}{q^{\tau}}\quad \mbox{  with  } \quad\frac{b}{q^d}+P\left(\frac{p}{q} \right) \in \frac{\Z}{q}
\end{split}
\quad  \mbox{  i.o.} \right\}
\end{equation} 

and $$W_{\tau}^{M}\left( P_{\alpha}\right)\! \left[K \right] := \left\{ x \in \left[ M , M+1 \right] \; : \;  \left|x-\frac{p}{q}\right|<\frac{1}{q^{\tau}} \mbox{  and  } \left|P(x)+\alpha-\frac{r}{q}\right|<\frac{K}{q^{\tau}} \mbox{  i.o.} \right\}.$$ 
For simplicity, omit the square brackets in the above notation if $K=1$. 

With these definitions, Lemmas~\ref{sens1} and \ref{sens2} amount to claiming that, for any integer $M$, $$W_{\tau}^{M}\left( P_{\alpha}\right) \; \subset R_{\tau}^{M}(\alpha)\! \left[ K_M \right] \; \subset  W_{\tau}^{M}\left( P_{\alpha}\right)\! \left[2K_M \right].$$
Now it is readily seen that, for any $\epsilon >0$, the above inclusions imply that $$W_{\tau}^{M}\left( P_{\alpha}\right) \; \subset R_{\tau - \epsilon}^{M}(\alpha) \; \subset  W_{\tau - 2\epsilon}^{M}\left( P_{\alpha}\right).$$
Defining 
\begin{equation}\label{rtalpha}
R_{\tau}\left( \alpha \right) := \bigcup_{M\in\Z} R_{\tau}^{M}\left( \alpha \right),
\end{equation} 
it follows that, for any $\epsilon >0$, $$W_{\tau} \left( P_{\alpha}\right) \; \subset R_{\tau - \epsilon}(\alpha) \; \subset  W_{\tau - 2\epsilon}\left( P_{\alpha}\right).$$
Thus, the following proposition has been proved~:

\begin{prop}
For any $\tau >0$, $\dim W_{\tau} \left( P_{\alpha}\right) \le \underset{\epsilon \rightarrow 0^+}{\lim} \dim R_{\tau - \epsilon}\left( \alpha \right)$ . 

Furthermore, the equality $\dim W_{\tau} \left( P_{\alpha}\right) = \dim R_{\tau}\left( \alpha \right)$ holds at any point of continuity of the function $\tau \mapsto \dim W_{\tau}\left( P_{\alpha}\right).$
\end{prop}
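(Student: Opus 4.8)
The plan is to deduce both assertions directly from the two-sided inclusion
$$W_{\tau}(P_{\alpha}) \subset R_{\tau-\epsilon}(\alpha) \subset W_{\tau-2\epsilon}(P_{\alpha}),$$
valid for every $\tau>0$ and every $\epsilon>0$, together with the monotonicity of Hausdorff dimension under set inclusion. The only analytic inputs needed are the observation that the maps $\tau\mapsto\dim R_{\tau}(\alpha)$ and $\tau\mapsto\dim W_{\tau}(P_{\alpha})$ are non-increasing, and the squeeze theorem.

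For the first inequality, I would start from the left-hand inclusion $W_{\tau}(P_{\alpha})\subset R_{\tau-\epsilon}(\alpha)$. Since Hausdorff dimension is monotone under inclusion, this yields $\dim W_{\tau}(P_{\alpha})\le\dim R_{\tau-\epsilon}(\alpha)$ for every $\epsilon>0$. It then remains to check that the limit on the right-hand side exists: as $\tau'$ increases the defining approximation condition $|\alpha-b/q^{d}|<K/q^{\tau'}$ becomes more restrictive, so $\tau'\mapsto R_{\tau'}(\alpha)$ is decreasing for inclusion and hence $\tau'\mapsto\dim R_{\tau'}(\alpha)$ is non-increasing. Consequently $\epsilon\mapsto\dim R_{\tau-\epsilon}(\alpha)$ is monotone and bounded below by $0$, so its limit as $\epsilon\to0^{+}$ exists and equals $\inf_{\epsilon>0}\dim R_{\tau-\epsilon}(\alpha)$. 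Passing to the infimum over $\epsilon>0$ in the inequality above gives the first claim.

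For the equality at a point of continuity, the key step is to re-index the inclusion chain: replacing $\tau$ by $\tau+\epsilon$ produces
$$W_{\tau+\epsilon}(P_{\alpha}) \subset R_{\tau}(\alpha) \subset W_{\tau-\epsilon}(P_{\alpha}),$$
which now sandwiches $R_{\tau}(\alpha)$ itself between two well-approximation sets. Taking dimensions gives
$$\dim W_{\tau+\epsilon}(P_{\alpha}) \le \dim R_{\tau}(\alpha) \le \dim W_{\tau-\epsilon}(P_{\alpha})$$
for every $\epsilon>0$. Letting $\epsilon\to0^{+}$ and invoking the assumed continuity of $\tau\mapsto\dim W_{\tau}(P_{\alpha})$ at the point $\tau$, both outer terms converge to $\dim W_{\tau}(P_{\alpha})$; the squeeze theorem then forces $\dim R_{\tau}(\alpha)=\dim W_{\tau}(P_{\alpha})$.

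There is essentially no serious obstacle here: the entire argument rests on monotonicity of dimension and a careful bookkeeping of the parameter $\tau$. The one point deserving attention is the shift $\tau\mapsto\tau+\epsilon$, which is precisely what converts the one-sided bound of the first part into a genuine two-sided estimate, and thereby makes the continuity hypothesis exactly the condition under which the squeeze closes.
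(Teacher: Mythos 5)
Your proof is correct and takes essentially the same route as the paper: the paper establishes exactly the two-sided inclusion $W_{\tau}(P_{\alpha}) \subset R_{\tau-\epsilon}(\alpha) \subset W_{\tau-2\epsilon}(P_{\alpha})$ (via its Lemmas on the reduction to the constrained approximation of $\alpha$) and deduces the proposition directly from it. Your re-indexing $\tau\mapsto\tau+\epsilon$ and the squeeze at a continuity point merely make explicit the limiting step the paper leaves implicit, and your monotonicity justifications are sound.
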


Since the function $\tau \mapsto \dim W_{\tau}\left( P_{\alpha}\right)$ is obviously decreasing, it defines a regulated function (that is, it admits at every point both left and right limit). Now it is well--known that the set of discontinuities of a regulated function is at most countable, from which it follows that, for almost all $\tau >0$, $\dim W_{\tau}\left( P_{\alpha}\right) = \dim R_{\tau}\left( \alpha \right).$

In fact, much more may be expected. Defining the set $W_{\tau} \left( f \right) $ for any function $f$ in the same way as $W_{\tau} \left( P\right)$, one may indeed state this conjecture~:

\begin{conj}\label{conj}
For any smooth function $f$ defined over $\R$, the map $\tau \mapsto \dim W_{\tau} \left( f \right)$ is continuous.
\end{conj}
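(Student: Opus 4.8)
The function $g : \tau \mapsto \dim W_{\tau}(f)$ is decreasing, hence has one-sided limits everywhere and at most countably many discontinuities, all of jump type; ruling out such jumps is the entire content of the statement. The plan is to exhibit a \emph{continuous} candidate exponent $s_f : (1,\infty) \to [0,1]$ and to prove the exact identity $g(\tau) = s_f(\tau)$ only for a dense set $D$ of exponents; continuity of $g$ on all of $(1,\infty)$ then follows automatically. Indeed, given an arbitrary $\tau$, choose $\tau_n, \tau_n' \in D$ with $\tau_n \uparrow \tau$ and $\tau_n' \downarrow \tau$. Monotonicity gives $s_f(\tau_n) = g(\tau_n) \ge g(\tau) \ge g(\tau_n') = s_f(\tau_n')$, and letting $n \to \infty$ the continuity of $s_f$ squeezes $g(\tau)$ between $s_f(\tau)$ and $s_f(\tau)$. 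Thus it suffices to obtain matching upper and lower bounds for $\dim W_{\tau}(f)$ on a dense set of $\tau$, with a common value varying continuously in $\tau$.

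For the upper bound I would use the natural covering of the limsup set $W_{\tau}(f)$. Each admissible triple $(p,q,r)$ contributes an interval of length $\asymp q^{-\tau}$ around $p/q$ on which the condition $|f(x) - r/q| < q^{-\tau}$ can hold; counting, for each $q$, the integers $p$ in a fixed bounded range together with the admissible $r$ (controlled by the modulus of continuity of $f$ on compacta) yields a Hausdorff-$s$ sum that converges for $s > s_f(\tau)$, where $s_f$ is an explicit, continuous function of $\tau$. This gives $g(\tau) \le s_f(\tau)$ for every $\tau$ and already shows that $g$ is dominated by a continuous majorant.

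For the matching lower bound I would appeal to the ubiquity and Mass Transference machinery of Beresnevich and Velani. The idea is to regard the fractions $p/q$ carrying a valid approximation of $f$ as a family of resonant centres, to show that the associated \emph{enlarged} limsup set (balls of radius $q^{-\tau/s}$ in place of $q^{-\tau}$) has full Lebesgue measure for the critical value $s = s_f(\tau)$, and then to transfer this to $\mathcal{H}^{s_f(\tau)}(W_{\tau}(f)) = \infty$, whence $g(\tau) \ge s_f(\tau)$. The full-measure input is a divergence Borel--Cantelli statement resting on a quasi-independence-on-average estimate for the resonant centres; this is where the non-degeneracy of $f$ (for instance that $f''$ vanishes only on a set of controlled dimension, as in the hypothesis of Theorem~\ref{bdv}) must enter, and I would establish it only for those $\tau$ avoiding the finitely many transition values at which the counting changes regime — such $\tau$ being dense, this is enough by the squeezing argument above.

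The hard part is precisely this divergence and counting input. Estimating the number of rationals $p/q$ lying within $q^{-\tau}$ of the graph of a \emph{general} smooth $f$ is a deep problem with no satisfactory answer beyond curves of controlled curvature, and for such $f$ it is not even clear what the continuous candidate $s_f$ should be: the dimension may in principle depend on the fine distribution of the rationals near the graph rather than on a clean closed form. A second, structural obstruction is that Hausdorff dimension does not commute with the nested intersection $W_{\tau}(f) = \bigcap_{\tau' < \tau} W_{\tau'}(f)$ (up to a set of smaller dimension), so left-continuity cannot be read off from the dimensions of the approximating sets alone; it is exactly the uniform lower bound furnished by ubiquity, and its persistence up to the critical exponents, that the conjecture demands and that the present methods secure only in the congruentially constrained polynomial setting of Theorems~\ref{princi} and~\ref{sec}.
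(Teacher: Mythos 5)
You are attempting a statement that the paper itself records only as Conjecture~\ref{conj}: the paper proves nothing beyond the observation that $g(\tau):=\dim W_{\tau}(f)$ is decreasing, hence regulated with at most countably many discontinuities, so there is no proof of the paper's to compare yours against; the only question is whether your attempt settles the conjecture, and it does not. The one part of your write-up that is a complete argument is the reduction: if a \emph{continuous} $s_f$ dominates $g$ everywhere and agrees with $g$ on a dense set of exponents, then monotonicity and your squeeze give $g=s_f$ everywhere, hence continuity. That reduction is correct and is a reasonable way to organize an attack. But everything feeding into it is a programme rather than a proof, as your own final paragraph concedes, so the attempt has a genuine gap --- indeed the gap \emph{is} the conjecture.

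Concretely, two steps fail for general smooth $f$. First, your upper bound does not produce a continuous matching exponent: the count of admissible $p/q$ (those for which some $r$ exists with $|f(p/q)-r/q|<q^{-\tau}$, up to a Lipschitz constant) is \emph{not} controlled by the modulus of continuity of $f$; it is an equidistribution problem for the fractional parts of $qf(p/q)$, solved only under curvature-type hypotheses and in restricted ranges of $\tau$ (Theorem~\ref{bdv} needs $f\in C^3$, $f''$ vanishing on a small set, and $\tau<2$; for $\tau$ between $2$ and $\deg P$ even the polynomial case is open, as the concluding remarks of the paper note). Ignoring admissibility yields only the Jarn\'ik--Besicovitch majorant $2/\tau$, which is continuous but not matching. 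Second, the lower-bound input via ubiquity/mass transference requires precisely the quasi-independence and divergence estimates that smoothness alone cannot supply, and the paper's own results show why no $f$-independent mechanism can exist: for $f=P+\alpha$ with $P\in\Z[X]$ of degree $d\ge 2$, one has $\dim W_{\tau}(f)=2/(d\tau)>0$ for all $\tau>d$ when $\alpha=0$ (Theorem~\ref{bbase}), whereas $W_{\tau}(f)=\emptyset$ for almost every $\alpha$ once $\tau>d+1$ (Theorem~\ref{princi}), with an uncountable exceptional set of $\alpha$ (last concluding remark). So any candidate $s_f$ must encode fine Diophantine properties of $f$'s coefficients and values, and for sparse resonant data (e.g.\ Liouville-type parameters) the dimension of the resulting limsup set is governed by $\liminf$-type quantities whose continuity in $\tau$ is exactly the point at issue, not a consequence of the machinery. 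Your scheme is a sensible reduction, but the two essential inputs --- a continuous exponent matching $g$ on a dense set, and the divergence lower bound at it --- are missing, and supplying them for all smooth $f$ is the open problem itself.
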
 

Obviously, the statement may be extended both to higher dimensions and by weakening the assumption on the regularity of the function $f$. Note that in the case of simultaneous approximation of independent quantities, the dimension function is known to be continuous in any case (see~\cite{rynne} to specify this assertion). On the other hand, one cannot ask the function $\tau \mapsto \dim W_{\tau} \left( f \right)$ to be differentiable for any positive value of $\tau$ in the general case as shown by the example of the circle $\mathbb{S}^1$. Indeed, combining the multidimensional extension of Dirichlet's Theorem in Diophantine approximation, Remark~\ref{hausdcircle} and Theorem~\ref{hausdcircle}, it is possible to compute the value of $\dim W_{\tau}(\mathbb{S}^1)$ for any $\tau>0$~:

\begin{equation*}
\dim W_{\tau}(\mathbb{S}^1) = \left\{
    \begin{array}{ll}
        1 & \mbox{if } 0\le \tau \le 3/2 \\
        (3-\tau)/\tau & \mbox{if } 3/2 < \tau \le 2 \\
		1/\tau & \mbox{if } \tau > 2. 
    \end{array}
\right.
\end{equation*}
From Remark~\ref{hausdcircle}, this also holds true for any arc contained in $\mathbb{S}^1$. Thus, the function $\tau \mapsto \dim W_{\tau}(\mathbb{S}^1)$ is piecewise differentiable as a continuous piecewise rational function. It may be expected, as a generalization of Conjecture~\ref{conj}, that this behaviour holds true for any function $\tau \mapsto \dim W_{\tau}(f)$ provided that $f$ is \emph{regular enough}.

\paragraph{}
In what follows, Theorems~\ref{princi} and \ref{sec} shall be proven for the set $R_{\tau}\left( \alpha \right)$~: since the bounds provided by these theorems are continuous in $\tau$, it suffices to study $\dim R_{\tau}\left( \alpha \right)$ rather than dealing with $\dim R_{\tau - \epsilon}\left( \alpha \right)$ before letting $\epsilon$ tend to zero.

\subsection{The congruential constraint}\label{premier2}

The condition $$\frac{b}{q^d}+P\left(\frac{p}{q} \right) \in \frac{\Z}{q},$$ with $b$ and $p$ integers and $q$ a positive integer appears in the definition of the set $R_{\tau}\left( \alpha \right)$. Plainly, it amounts to the congruence equation
\begin{align}
b \equiv -q^d P\left(\frac{p}{q} \right)	\imod{q^{d-1}} \label{2}
\end{align}
having a solution. Since the reduction modulo $q$ of~(\ref{2}) is $$b\equiv a_dp^d \imod{q}$$ (recall that the leading coefficient of $P(X)$ is $-a_d$), it should be obvious that 
\begin{equation}\label{ensRdetravail}
\tilde{R}_{\tau}\left( \alpha \right) \subset R_{\tau}\left( \alpha \right) \subset R_{\tau}^*\left( \alpha \right)
\end{equation} 
for any $\tau>0$, where
\begin{equation}\label{rtalphaprime}
R_{\tau}^{*}(\alpha) := \left\{ x \in \R \; : \;
\begin{split}
&  \left|x-\frac{p}{q}\right|<\frac{1}{q^{\tau}}\quad \mbox{  and  } \\
&  \left|\alpha-\frac{b}{q^d}\right|<\frac{1}{q^{\tau}}\quad \mbox{  with  } \quad b\equiv a_dp^d \imod{q}
\end{split}
\quad  \mbox{  i.o.} \right\}
\end{equation}
and where the set $\tilde{R}_{\tau}\left( \alpha \right)$ is defined in the same way as $R_{\tau}\left( \alpha \right)$ in~(\ref{rtalphaM}) and (\ref{rtalpha}) with the additional constraint $\gcd(q, pda_d)=1$ on the denominators of the rational approximants.
 
In fact, the upper bound in Theorem~\ref{sec} shall be established in section~\ref{sec5} for the set $R_{\tau}^*\left( \alpha \right)$ whereas Theorem~\ref{princi} shall follow in an obvious way from the proof in subsection~\ref{subsec4.1} that the set 
\begin{equation}\label{Ietoiletau}
I^{*}_{\tau}(P):= \left\{\alpha \in (0,1) \; : \; 
  \left|\alpha-\frac{b}{q^d}\right|<\frac{1}{q^{\tau}}\quad \mbox{  with  } b\in a_d G_d(q) \mbox{  i.o.} \right\}
\end{equation} 
has zero Lebesgue measure when $\tau > d+1$ (recall that $G_d(q)$ denotes the set of $d^{th}$ powers modulo $q$). Furthermore, the bound $d+1$ given by Theorem~\ref{princi} cannot be trivially improved if it is shown that $I^{*}_{\tau}(P)$ contains a subset which is not of Lebesgue measure zero when $\tau \le d+1$.

To this end, it shall be proven in subsection~\ref{subsec4.2} that the subset $\tilde{I}_{\tau}(P) \subset I^{*}_{\tau}(P)$ has full measure whenever $\tau\le d+1$, where
\begin{equation}\label{Itildetau}
\tilde{I}_{\tau}(P):= \left\{\alpha \in (0,1) \; : \; 
  \left|\alpha-\frac{b}{q^d}\right|<\frac{1}{q^{\tau}}\quad \mbox{  with  } b\in a_d G_d^{\times}(q) \mbox{ and } \gcd(q,da_d)=1 \mbox{  i.o.} \right\},
\end{equation} 
and where $G_d^{\times}(q)$ denotes (with an abuse of notation) the set of \emph{primitive} $d^{th}$ powers modulo $q$.

It should be noted that $\tilde{I}_{\tau}(P)$ is to the set $\tilde{R}_{\tau}\left( \alpha \right)$ as $I^{*}_{\tau}(P)$ is to the set $R_{\tau}^{*}(\alpha)$ in the following sense~: assume that $b\equiv a_d\tilde{p}^d \imod{q}$ for some $\tilde{p}\in\Z$ satisfying $\gcd(q,\tilde{p}da_d)=1$ as in definition~(\ref{Itildetau}) of the set $\tilde{I}_{\tau}(P)$. From the Chinese Remainder Theorem, solving this congruence equation modulo $q$ amounts to solving the same equation modulo $\pi^{\nu_{\pi}(q)}$ for any prime divisor $\pi$ of $q$. Now, under the assumption that $\gcd(q,\tilde{p}da_d)=1$, any solution $\tilde{p}$ of $b\equiv a_d\tilde{p}^d \imod{\pi^{\nu_{\pi}(q)}}$ may be lifted, thanks to Hensel's lemma, to a unique solution $p$ of the congruence equation~(\ref{2}) taken modulo $\pi^{\nu_{\pi}(q)(d-1)}$ ($d\ge 2$) such that $\pi$ does not divide the product $pda_d$. Therefore, using once again the Chinese Remainder Theorem, a solution in $\tilde{p}$ to $b\equiv a_d\tilde{p}^d \imod{q}$ satisfying $\gcd(q,\tilde{p}da_d)=1$ may be lifted in a unique way to a solution $p$ of Equation~(\ref{2}) such that $\gcd(q,pda_d)=1$ as in the definition of the set $\tilde{R}_{\tau}\left( \alpha \right)$.

\section{Some auxiliary lemmas} \label{sec3}

In this section are collected various results which shall be needed later.

\subsection{Comparative growths of some arithmetical functions}

For $n\ge 2$ an integer, let $\tau (n)$ be the number of divisors of $n$ and $\omega (n)$ the number of \textit{distinct} prime factors dividing $n$~: if $n= \prod_{i=1}^r \pi_{i}^{\nu_i}$ is the prime factor decomposition of this integer, recall that
\begin{align*}
\omega(n) = r \quad \mbox{ and } \quad \tau(n) = \prod_{i=1}^r \left( \nu_i +1 \right).
\end{align*}

Some results about comparative growth properties of these two arithmetical function are now recalled.

\begin{lem}\label{tau}
For any $\epsilon >0$, $\tau(n)=o\left(n^ \epsilon \right)$ and the average value of $\tau (n)$ is $\log n$, i.e. $$\frac{1}{n}\sum_{k=1}^{n} \tau(k) \underset{n\rightarrow +\infty}{\sim} \log n.$$
\end{lem}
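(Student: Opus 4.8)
The lemma gathers two classical estimates, which I would establish independently.

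\textbf{The bound $\tau(n)=o(n^{\epsilon})$.} The plan is to exploit the multiplicativity of $\tau$ together with the elementary inequality $\nu+1\le 2^{\nu}$, valid for every integer $\nu\ge 0$. Writing $n=\prod_{i=1}^{r}\pi_i^{\nu_i}$, I would examine the ratio
\[
\frac{\tau(n)}{n^{\epsilon/2}}=\prod_{i=1}^{r}\frac{\nu_i+1}{\pi_i^{(\epsilon/2)\nu_i}}
\]
and split the prime factors according to whether $\pi_i^{\epsilon/2}\ge 2$, i.e. $\pi_i\ge 2^{2/\epsilon}$, or not. For each \emph{large} prime the corresponding factor is at most $(\nu_i+1)2^{-\nu_i}\le 1$; for each of the \emph{finitely many} small primes $\pi<2^{2/\epsilon}$, the factor $(\nu+1)\,\pi^{-(\epsilon/2)\nu}$ is bounded above by a constant depending only on $\pi$ and $\epsilon$ (it even tends to $0$ as $\nu\to\infty$, so its supremum over $\nu\ge 0$ is finite). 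Taking the product over the boundedly many small primes yields a constant $C(\epsilon)$ with $\tau(n)\le C(\epsilon)\,n^{\epsilon/2}$, whence $\tau(n)=O(n^{\epsilon/2})=o(n^{\epsilon})$.

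\textbf{The average order.} Here the plan is to interchange the order of summation in the divisor sum. Since $\tau(k)=\sum_{d\mid k}1$, one has
\[
\sum_{k=1}^{n}\tau(k)=\sum_{k\le n}\ \sum_{d\mid k}1=\sum_{d\le n}\left\lfloor\frac{n}{d}\right\rfloor,
\]
the last step counting, for each $d\le n$, the multiples of $d$ not exceeding $n$. Using $\lfloor n/d\rfloor=n/d+O(1)$ together with the harmonic-sum estimate $\sum_{d\le n}1/d=\log n+O(1)$, I would obtain $\sum_{k=1}^{n}\tau(k)=n\log n+O(n)$. Dividing by $n$ gives $\frac{1}{n}\sum_{k=1}^{n}\tau(k)=\log n+O(1)$, which is the claimed asymptotic equivalence. (Dirichlet's hyperbola method would refine the error term to $O(\sqrt{n})$ and identify the secondary main term $(2\gamma-1)n$, but this sharper form is not needed here.)

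Neither part presents a genuine difficulty, both being standard. The only point requiring care is the uniformity in the first part~: one must ensure that the product over the small primes is bounded \emph{independently of $n$}, which is exactly what the threshold $2^{2/\epsilon}$ guarantees, since it leaves only finitely many primes contributing a factor exceeding $1$.
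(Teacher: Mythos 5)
Your proof is correct and complete. The paper itself offers no argument for this lemma --- it simply cites Hardy and Wright (Theorems~315 and 320) --- and your two arguments (the multiplicativity-based split of the primes at the threshold $2^{2/\epsilon}$, giving $\tau(n)\le C(\epsilon)n^{\epsilon/2}$, and the divisor-sum interchange $\sum_{k\le n}\tau(k)=\sum_{d\le n}\lfloor n/d\rfloor=n\log n+O(n)$) are precisely the standard proofs behind those citations, your crude $\lfloor n/d\rfloor=n/d+O(1)$ estimate being already sufficient for the stated asymptotic equivalence, as you correctly note in setting aside the hyperbola-method refinement.
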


\begin{proof}
See~\cite{hw}, Theorems~315 and 320.
\end{proof}

As is well--known, the average value of $\omega(n)$ is asymptotic to $\log \log n$ when $n$ tends to infinity (\cite{hw}, \S 22.11). However, a stronger statement similar to Lemma~\ref{tau} shall be needed in the proofs to come. To this end, the definition of the maximal order of an arithmetical function is introduced~:

\begin{defi}
An arithmetical function $f$ has \textit{maximal} (resp. \textit{minimal}) \textit{order $g$} if $g$ is a positive nondecreasing arithmetical function such that $$\underset{n\rightarrow +\infty}{\limsup}\, \frac{f(n)}{g(n)} = 1 \quad \left( \textrm{resp.} \; \underset{n\rightarrow +\infty}{\liminf} \,\frac{f(n)}{g(n)} = 0 \right).$$
\end{defi}

For instance, it is not difficult to see that the identity function is both a minimal and a maximal order for Euler's totient function.

\begin{lem}\label{omega}
A maximal order for $\omega (n)$ is $\log n / \log \log n$.

In particular, for any $\epsilon >0$ and any positive integer $m$, $$\omega(n)=o\left( \log n \right)\quad \textrm{ and } \quad m^{\omega(n)}=o\left(n^\epsilon \right).$$
\end{lem}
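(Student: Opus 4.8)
The plan is to bound $\omega(n)$ from above by exploiting the elementary fact that an integer with many distinct prime factors cannot be small, and then to show that this bound is attained along the primorials. Write $p_1 < p_2 < \cdots$ for the increasing sequence of all primes. If $\omega(n) = r$ and $q_1 < \cdots < q_r$ are the distinct primes dividing $n$, then $q_i \ge p_i$ for each $i$, so that $n \ge q_1 \cdots q_r \ge p_1 p_2 \cdots p_r$. Taking logarithms yields
$$\log n \ge \sum_{i=1}^{r} \log p_i = \theta(p_r),$$
where $\theta$ denotes the Chebyshev function.

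First I would invoke the Prime Number Theorem in its two equivalent forms $\theta(x) \sim x$ and $p_r \sim r\log r$ as $r\to\infty$. Combined with the displayed inequality, these give $\log n \ge (1+o(1))\, p_r \sim r\log r = \omega(n)\log\omega(n)$. The next step, which I expect to be the main technical point, is to invert this relation. Since $r \mapsto r\log r$ is increasing with inverse $f^{-1}(x) \sim x/\log x$ as $x\to\infty$ --- a fact one checks directly by substituting $r = x/\log x$ into $r\log r$ --- the inequality $\omega(n)\log\omega(n) \le (1+o(1))\log n$ produces
$$\omega(n) \le (1+o(1))\,\frac{\log n}{\log\log n}.$$
This gives $\limsup_{n\to\infty} \omega(n)/(\log n/\log\log n) \le 1$. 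The care required here lies entirely in tracking that the doubly logarithmic corrections do not perturb the leading constant $1$.

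To see that this constant is attained, I would test the bound on the sequence of primorials $n_k := p_1 p_2 \cdots p_k$. For these one has $\omega(n_k) = k$ and $\log n_k = \theta(p_k) \sim p_k \sim k\log k$, so the very same computation gives $\log n_k/\log\log n_k \sim k = \omega(n_k)$; hence the ratio tends to $1$ along $(n_k)$. Together with the reverse inequality this shows that $g(n) := \log n/\log\log n$ is a maximal order for $\omega$, once one checks, as is routine for $n$ large, that $g$ is positive and nondecreasing, as the definition demands.

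Finally, the two \emph{in particular} assertions follow immediately from the upper bound. Since $1/\log\log n \to 0$, the estimate $\omega(n) \le (1+o(1))\log n/\log\log n$ gives at once $\omega(n) = o(\log n)$. For the second, fix $\epsilon > 0$ and a positive integer $m$, and write $m^{\omega(n)} = \exp\left(\omega(n)\log m\right)$. Then
$$\frac{\omega(n)\log m}{\log n} \le (1+o(1))\,\frac{\log m}{\log\log n},$$
which tends to $0$ as $n\to\infty$; in particular $\omega(n)\log m \le \tfrac{\epsilon}{2}\log n$ for all $n$ large enough, whence $m^{\omega(n)} \le n^{\epsilon/2} = o\left(n^{\epsilon}\right)$.
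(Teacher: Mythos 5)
Your proof is correct and is essentially the same argument the paper relies on: the paper's ``proof'' merely cites Hardy and Wright (p.~355), and your combination of the primorial bound $n \ge p_1\cdots p_{\omega(n)}$, the Prime Number Theorem in the forms $\theta(x)\sim x$ and $p_r\sim r\log r$, the inversion $\omega(n)\le(1+o(1))\log n/\log\log n$, and attainment along $n_k=p_1\cdots p_k$ is precisely the classical argument implicit in that reference, with the two \emph{in particular} statements deduced soundly. The only cosmetic point is that $g(n)=\log n/\log\log n$ is positive and nondecreasing only for $n$ beyond a small threshold (it is negative at $n=2$ and decreasing up to $n=e^{e}$), so one should modify $g$ on finitely many values, which affects neither the $\limsup$ nor the conclusions.
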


\begin{proof}
The first result is implicit in~\cite{hw}, p.355. The others follow easily from this one.
\end{proof}

\subsection{Counting the number of power residues in a reduced system of residues}\label{countt}

The congruence equations appearing in subsection~\ref{premier2} in the definition of the sets $\tilde{I}_{\tau}(P)$ and $I^{*}_{\tau}(P)$ on the one hand and $\tilde{R}_{\tau}(\alpha)$ and $R_{\tau}^{*}(\alpha)$ on the other involve power residues modulo an integer $q\ge 1$. The cardinality of such a set is now computed.

\paragraph{}
Let $n\ge 2$ and $d\ge 2$ be integers. Denote by $r_d(n)$ (resp. by $e_d(n)$) the number of distinct $d^{th}$ powers in the system of residues modulo $n$ (resp. in the \emph{reduced} system of residues modulo $n$) and by $u_d(n)$ the number of $d^{th}$ roots of unity modulo $n$, that is,
\begin{align*}
r_d(n) &= \textrm{Card} \left\{ m^d \imod{n} \; : \; m \in \Z/n\Z \right\},  \\
e_d(n) &= \textrm{Card} \left\{ m^d \imod{n} \; : \; m \in \left(\Z/n\Z 
\right)^{\times} \right\},  \\
u_d(n)& = \textrm{Card} \left\{ m\in \Z/n\Z\; : \; m^d \equiv 1 \imod{n} \right\} .
\end{align*}
Set furthermore $r_d(1)=e_d(1)=u_d(1)=1$.

\begin{rem}\label{multirdq}
If $u(f,n)$ denote the number of solutions in $x$ of the congruence $$f(x):=\sum_{k=0}^{d} a_k x^k \equiv 0 \imod{n}$$ for a given polynomial $f\in \Z [X]$ of degree $d\ge 1$, it is well--known that, as a consequence of the Chinese Remainder Theorem, $u(f,n)$ is a multiplicative function of $n$. It follows that $u_d(n)$ is multiplicative with respect to $n$ for any fixed $d$.

In fact, the same holds true for $r_d(n)$ and $e_d(n)$~:
\end{rem}

\begin{lem}\label{multiruedq}
For any fixed $d$, the functions $r_d(n)$, $e_d(n)$ and $u_d(n)$ are multiplicative with respect to $n$.
\end{lem}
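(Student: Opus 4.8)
The plan is to prove multiplicativity of all three functions simultaneously by exploiting the Chinese Remainder Theorem, following the template already invoked for $u_d(n)$ in Remark~\ref{multirdq}. Suppose $n = n_1 n_2$ with $\gcd(n_1, n_2) = 1$. The CRT provides a ring isomorphism $\Z/n\Z \cong \Z/n_1\Z \times \Z/n_2\Z$, sending the class of $m$ to the pair of its reductions $(m \bmod n_1, m \bmod n_2)$. The first thing I would do is observe that this isomorphism is compatible with the power map: the class $m^d \bmod n$ corresponds to the pair $(m^d \bmod n_1, m^d \bmod n_2)$, i.e. raising to the $d^{th}$ power commutes with the product decomposition. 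This is the single structural fact that makes everything go through.

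With that in hand, I would argue for $r_d$ as follows. The set $G_d(n)$ of $d^{th}$ powers modulo $n$ is precisely the image of the map $\varphi_d : \Z/n\Z \to \Z/n\Z$, $m \mapsto m^d$. Under the CRT identification, $\varphi_d$ is the product map $\varphi_d^{(1)} \times \varphi_d^{(2)}$ acting coordinatewise on $\Z/n_1\Z \times \Z/n_2\Z$, and the image of a product of maps is the product of the images. Hence $G_d(n)$ corresponds bijectively to $G_d(n_1) \times G_d(n_2)$, which gives $r_d(n) = r_d(n_1) r_d(n_2)$ by counting. For $e_d$ the reasoning is identical after restricting to units: since the isomorphism carries $(\Z/n\Z)^{\times}$ onto $(\Z/n_1\Z)^{\times} \times (\Z/n_2\Z)^{\times}$, the image of the power map restricted to units again factors as a product, yielding $e_d(n) = e_d(n_1) e_d(n_2)$. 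The case of $u_d$ is already granted by Remark~\ref{multirdq}, but it fits the same scheme: an element is a $d^{th}$ root of unity modulo $n$ if and only if each of its two components is a $d^{th}$ root of unity modulo $n_i$, since $1$ corresponds to $(1,1)$.

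Finally I would check the boundary condition, namely that the convention $r_d(1) = e_d(1) = u_d(1) = 1$ is consistent with multiplicativity (it is, trivially, since $r_d(n \cdot 1) = r_d(n) = r_d(n) r_d(1)$ forces the normalisation we have chosen), so that the functions are multiplicative in the strict arithmetical sense rather than merely satisfying the product relation on coprime arguments exceeding $1$.

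I do not expect a genuine obstacle here; the proof is essentially a bookkeeping exercise once the compatibility of the power map with the CRT decomposition is recorded. The only point that warrants a moment's care is the passage from ``the power map factors as a coordinatewise product'' to ``its image factors as a set-theoretic product,'' which relies on the elementary but worth-stating fact that for maps $f_i : X_i \to Y_i$ one has $\operatorname{Im}(f_1 \times f_2) = \operatorname{Im}(f_1) \times \operatorname{Im}(f_2)$. Everything else is immediate from the ring isomorphism and its restriction to the respective unit groups.
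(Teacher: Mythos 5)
Your proof is correct and follows the same route the paper itself indicates: the paper handles $u_d$ via the Chinese Remainder Theorem argument of Remark~\ref{multirdq} and simply cites \cite{koro} (Lemma~1) for $r_d$ and $e_d$, where the proof is precisely your observation that the CRT ring isomorphism commutes with the $d^{th}$ power map and restricts to the unit groups, so the image of the power map factors as a product. Your write-up in fact supplies the details the paper leaves to the reference, including the small but worthwhile point that $\operatorname{Im}(f_1 \times f_2) = \operatorname{Im}(f_1) \times \operatorname{Im}(f_2)$.
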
 

\begin{proof}
See~\cite{koro}, Lemma~1 for the case of the functions $r_d(n)$ and $e_d(n)$.
\end{proof}

Explicit formulae may be given for $r_d(n)$, $e_d(n)$ and $u_d(n)$. Since these arithmetical functions are multiplicative when $d$ is fixed, it suffices to give such formulae in the case where $n$ is a power of a prime.

\begin{prop}\label{decompte}
Let $n=\pi^k$ be a power of a prime number ($\pi \in \pr$, $k\ge 1$ integer). Then the following equations hold~: $$e_d(n)=\frac{\varphi(\pi^k)}{u_d(\pi^k)} \quad and \quad r_d(n) = \frac{\varphi(\pi^k)}{u_d(\pi^k)} + \frac{\varphi(\pi^{k-d})}{u_d(\pi^{k-d})}+ \dots + \frac{\varphi(\pi^{k-md})}{u_d(\pi^{k-md})}+1,$$ where $m$ stands for the largest integer such that $k-md\ge 1$.

Furthermore,
$$
u_d(n)=\left\{
    \begin{array}{ll}
        \gcd(2d, \varphi(n)) & \mbox{if } 2|d,\; \pi=2 \mbox{ and } k\ge 3 ,\\
        \gcd(d, \varphi(n)) & \mbox{otherwise.}
    \end{array}
\right.$$
\end{prop}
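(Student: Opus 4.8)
The plan is to prove the three identities separately, treating $e_d$ first, deducing $r_d$ from it by a valuation argument, and finally computing $u_d$ from the structure of the unit group modulo a prime power. Throughout, the formulae for $e_d$ and $r_d$ involve $u_d$ only as a symbol, so I can establish them before pinning down the explicit value of $u_d$.

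First I would handle $e_d(\pi^k)$. Consider the endomorphism $\chi$ of the abelian group $\left(\Z/\pi^k\Z\right)^{\times}$ given by $\chi(x)=x^d$. Its image is precisely the set of $d^{th}$ power units, of cardinality $e_d(\pi^k)$, while its kernel is the set of $d^{th}$ roots of unity, of cardinality $u_d(\pi^k)$. Since every nonempty fibre of a group homomorphism is a coset of its kernel, all fibres have the same size $u_d(\pi^k)$, and counting the $\varphi(\pi^k)$ elements of the domain yields
$$e_d(\pi^k) = \frac{\varphi(\pi^k)}{u_d(\pi^k)},$$
which is the first claimed identity (and in particular forces $u_d(\pi^k)\mid\varphi(\pi^k)$).

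Next, for $r_d(\pi^k)$ I would stratify the residues modulo $\pi^k$ by their $\pi$--adic valuation. Writing a nonzero $m$ as $m=\pi^a v$ with $\pi\nmid v$, one has $m^d=\pi^{ad}v^d$ of valuation exactly $ad$, since $v^d$ is a unit. Hence every $d^{th}$ power is either congruent to $0\imod{\pi^k}$, which happens precisely when $ad\ge k$ and accounts for the trailing $+1$, or has valuation $j=id$ with $0\le j<k$. For fixed $j=id$, the $d^{th}$ powers of valuation exactly $j$ are the residues $\pi^{id}w\imod{\pi^k}$ with $w$ a $d^{th}$ power unit modulo $\pi^{k-id}$; these are well defined modulo $\pi^k$, they are in bijection with the $d^{th}$ power units modulo $\pi^{k-id}$, and residues of distinct valuation are automatically distinct. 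Summing over $i=0,\dots,m$, where $m$ is the largest integer with $k-md\ge 1$, and inserting the formula already obtained for $e_d$,
$$r_d(\pi^k) = 1 + \sum_{i=0}^{m} e_d(\pi^{k-id}) = 1 + \sum_{i=0}^{m} \frac{\varphi(\pi^{k-id})}{u_d(\pi^{k-id})},$$
which is the second identity.

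Finally, the explicit value of $u_d(\pi^k)$ would follow from the known structure of $\left(\Z/\pi^k\Z\right)^{\times}$ together with the elementary fact that a cyclic group of order $N$ has exactly $\gcd(d,N)$ solutions of $x^d=1$. When $\pi$ is odd, or $\pi=2$ with $k\le 2$, this group is cyclic of order $\varphi(\pi^k)$, so $u_d(\pi^k)=\gcd(d,\varphi(\pi^k))$, giving the ``otherwise'' branch. The main obstacle is the remaining case $\pi=2$, $k\ge 3$, where $\left(\Z/2^k\Z\right)^{\times}\cong \Z/2\Z\times\Z/2^{k-2}\Z$ is \emph{not} cyclic; here the number of solutions of $x^d=1$ factors as $\gcd(d,2)\cdot\gcd(d,2^{k-2})$. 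For $d$ odd this product is $1=\gcd(d,\varphi(2^k))$, again the ``otherwise'' branch, while for $d$ even a short computation with $2$--adic valuations gives
$$\gcd(2,d)\cdot\gcd(d,2^{k-2}) = 2^{1+\min(\nu_2(d),\,k-2)} = 2^{\min(1+\nu_2(d),\,k-1)} = \gcd(2d,2^{k-1}) = \gcd(2d,\varphi(2^k)),$$
which is the first branch. The delicate point, and the place where care is genuinely required, is precisely this reconciliation in the non-cyclic $2$--adic case: one must track $2$--adic valuations to see that the product of the two gcds collapses into the single expression $\gcd(2d,\varphi(2^k))$.
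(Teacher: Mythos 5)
Your proof is correct and takes essentially the same route as the paper, which itself only cites \cite{koro} (Lemmas~2 and 3) but sketches the argument in Remark~\ref{remsolutionclasse}: the identity $e_d(\pi^k)=\varphi(\pi^k)/u_d(\pi^k)$ by fibre-counting the $d$-th power homomorphism on $\left(\Z/\pi^k\Z\right)^{\times}$, and the formula for $r_d(\pi^k)$ by stratifying residues into the classes of exact $\pi$-adic valuation $s$, each class of $d$-th powers contributing $e_d(\pi^{k-sd})$ distinct residues plus the single residue $0$ when $sd\ge k$. Your explicit reconciliation $\gcd(2,d)\cdot\gcd(d,2^{k-2})=\gcd(2d,\varphi(2^k))$ in the non-cyclic case $\pi=2$, $k\ge 3$ correctly supplies the one detail the paper outsources entirely to the reference.
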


\begin{proof}
See~\cite{koro}, Lemmas~2 and 3.
\end{proof}

\begin{rem}\label{remsolutionclasse}
Consider a partition of all numbers in the complete system of residues modulo $\pi^k$ ($\pi \in \pr$, $k\ge 1$ integer) into classes with regard to their divisibility by $\pi^s$ and not $\pi^{s+1}$, that is, the numbers of the form $x\pi^s$ with $\gcd(x,\pi)=1$ belong to the class numbered $s$, $0\le s \le k$. As is made clear from the proof of Proposition~\ref{decompte} in~\cite{koro}, the quantity $\varphi(\pi^{k-sd})/u_d(\pi^{k-sd})$ with $k-sd\ge 1$ counts the number of distinct elements modulo $\pi^k$ obtained when taking the $d^{th}$ power of the numbers in the $s^{th}$ class. If $sd\ge k$, then the $d^{th}$ power of any element in the $s^{th}$ class is equal to zero modulo $\pi^k$.

Furthermore, the proof of Proposition~\ref{decompte} also implies that, if $k-sd\ge 1$ and if $b \imod{\pi^k}$ is the $d^{th}$ power of an element in the $s^{th}$ class, then the number of solutions in $x$ to the congruence equation $b \equiv x^d \imod{\pi^k}$ is precisely $u_d(\pi^{k-sd})$.
\end{rem}

\section{The set $W_{\tau}(P_{\alpha})$ when $\tau > d+1$} \label{sec4}

Theorem~\ref{princi} is now proved and the optimality of the lower bound $d+1$ appearing in this theorem is also studied.

\subsection{Emptiness of the set for almost all $\alpha\in\R$}\label{subsec4.1}

In order to establish the result of Theorem~\ref{princi}, recall that from the discussion held in subsection~\ref{reducprob} and from the inclusions~(\ref{ensRdetravail}), it suffices to prove that the set $R_{\tau}^{*}(\alpha)$ as defined in~(\ref{rtalphaprime}) is empty in the metric sense when $\tau > d+1$. This in turn follows from the fact that, as a consequence of the convergent part of the Borel--Cantelli lemma, the set $I^{*}_{\tau}(P)$ as defined in~(\ref{Ietoiletau}) satisfies the same property.

To see this, first notice that, for any $N\ge 1$, a cover of $I^{*}_{\tau}(P)$ is given by $\cup_{q\ge N} J^*_{\tau}(q),$ where
\begin{align}\label{jqetoiletau}
J^*_{\tau}(q) := \bigcup_{\underset{b\in a_d G_d(q)}{0\le b \le q^d-1}}\left(\frac{b}{q^d}-\frac{1}{q^{\tau}} , \frac{b}{q^d}+\frac{1}{q^{\tau}}  \right).
\end{align}  
If $\tau>d$ and $q\ge 1$ is large enough, $J^*_{\tau}(q)$ is a union of $\left|a_dG_d(q) \right|q^{d-1}$ non--overlapping intervals, each of length $2/q^{\tau}$, that is, 
\begin{equation}\label{mesurejtquetoile}
\lambda\left(J^*_{\tau}(q) \right) = \frac{2\left|a_dG_d(q) \right|q^{d-1}}{q^{\tau}},
\end{equation} 
where $\lambda$ denotes the Lebesgue measure on the real line. On the other hand, since the ring $a_d\Z/q\Z$ is isomorphic to $\Z/\tilde{q}\Z$, where $\tilde{q} = q/\gcd(q,a_d)$, the following relationships hold true~: 
\begin{equation}\label{encadrementlambdarq}
r_d\left(\tilde{q}\right) := \left| G_d\left(\tilde{q} \right) \right| = \left| a_d G_d(q)\right| \le \left| G_d(q) \right|=: r_d(q).
\end{equation}
In order to study the convergence of the series $\sum_{q\ge 1} \lambda\left(J^*_{\tau}(q) \right)$, an upper bound (resp. a lower bound) for $r_d(q)$ (resp. for $r_d(\tilde{q})$) shall be established. Regarding the upper bound for $r_d(q)$, Lemma~\ref{multiruedq} and Proposition~\ref{decompte} imply that $$r_d(q) = \prod_{\stackrel{\pi |q}{\pi\in\pr}} \left(1+\sum_{s=0}^{m_q(\pi,d) } \frac{\varphi\left(\pi^{\nu_{\pi}(q)-sd} \right)}{u_d\left(\pi^{\nu_{\pi}(q)-sd} \right)} \right),$$ where $m_q(\pi,d) := \left\lfloor \frac{\nu_{\pi}(q)-1}{d}\right\rfloor$. Now, it is easily checked that, for all $s \in \llbracket 0, m_q(\pi,d) \rrbracket$, $$\frac{\varphi\left(\pi^{\nu_{\pi}(q)-sd} \right)}{u_d\left(\pi^{\nu_{\pi}(q)-sd} \right)}\le \frac{\varphi\left(\pi^{\nu_{\pi}(q)} \right)}{u_d\left(\pi^{\nu_{\pi}(q)} \right)},$$ hence 
\begin{align}\label{majorationrdq}
r_d(q) &\le \prod_{\stackrel{\pi |q}{\pi\in\pr}} \left[1+ \left(1+ \frac{\nu_{\pi}(q)-1}{d} \right) \frac{\varphi\left(\pi^{\nu_{\pi}(q)} \right)}{u_d\left(\pi^{\nu_{\pi}(q)} \right)} \right] \nonumber \\
&\le 2^{\omega(q)} \frac{\varphi(q)}{u_d(q)} \prod_{\stackrel{\pi |q}{\pi\in\pr}} \left(1+\nu_{\pi}(q) \right) = 2^{\omega(q)} \frac{\varphi(q)}{u_d(q)} \tau(q) \nonumber \\
&\le 2^{\omega(q)}\tau(q) q.
\end{align} 

As for the lower bound for $r_d(\tilde{q})$, first notice that Lemma~\ref{multiruedq} and Proposition~\ref{decompte} lead to the estimate 
\begin{equation}\label{majudq}
1\le u_d(q) \le (2d)^{\omega(q)}
\end{equation} 
valid for all $q\ge 1$. One may then deduce from them that 
\begin{align}\label{rqtilde}
r_d\left(\tilde{q} \right) &\ge e_d\left(\tilde{q} \right) = \frac{\varphi\left(\tilde{q}\right)}{u_d\left(\tilde{q}\right)} =\frac{\tilde{q}}{u_d\left(\tilde{q}\right)} \prod_{\stackrel{\pi |\tilde{q}}{\pi\in\pr}} \left(1-\frac{1}{\pi} \right) \nonumber \\
&\ge \frac{\tilde{q}}{\left(4d\right)^{\omega\left(\tilde{q}\right)}} \\
&\ge \frac{q}{\left|a_d\right| \left(4d\right)^{\omega\left(q\right)}} \nonumber,
\end{align}
the last inequality following from the definition of $\tilde{q}$.

Finally, the combination of the relationships~(\ref{mesurejtquetoile}), (\ref{encadrementlambdarq}), (\ref{majorationrdq}) and (\ref{rqtilde}) leads to the inequalities
\begin{equation}\label{encadrementserielambda}
\sum_{q\ge 1} \frac{1}{(4d)^{\omega(q)}q^{\tau-d}} \; \ll \; \sum_{q\ge 1} \lambda\left(J^*_{\tau}(q) \right) \; \ll \; \sum_{q\ge 1} \frac{2^{\omega(q)}\tau(q)}{q^{\tau-d}}\cdotp
\end{equation} 
From Lemmas~\ref{tau} and \ref{omega}, the right-hand side converges for any $\tau>d+1$, hence $\lambda(I^{*}_{\tau}(P))=0$ for $\tau > d+1$. This bound is best possible according to the convergent part of the Borel--Cantelli lemma since the series $\sum_{q\ge 1} \lambda\left(J^*_{\tau}(q) \right)$ diverges for $\tau \le d+1$. This is indeed implied by~(\ref{encadrementserielambda}) and the following general lemma.

\begin{lem}\label{serieL}
Let $n$ be a positive integer and $z$ be a positive real number. Define for any positive real number $s$ the series $$L_z (s):= \sum_{\underset{\gcd(q,n)=1}{q\ge 1}}\frac{z^{\omega(q)}}{q^s}\cdotp$$

Then the series $L_z (s)$ converges if, and only if, $s>1$.
\end{lem}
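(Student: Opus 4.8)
The plan is to treat the two implications separately: the convergence for $s>1$ will rest on the slow growth of $z^{\omega(q)}$ encoded in Lemma~\ref{omega}, while the divergence for $s\le 1$ is already forced by the prime terms of the series.

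First I would dispose of the convergence direction. If $z\le 1$, then $z^{\omega(q)}\le 1$ for every $q\ge 1$, so that $L_z(s)\le \sum_{q\ge 1}q^{-s}<\infty$ as soon as $s>1$. If $z>1$, fix an integer $m\ge z$, so that $z^{\omega(q)}\le m^{\omega(q)}$ for every $q$. By Lemma~\ref{omega}, for every $\epsilon>0$ one has $m^{\omega(q)}=o(q^{\epsilon})$, whence $z^{\omega(q)}\ll q^{\epsilon}$. Choosing any $\epsilon\in(0,s-1)$ then yields
$$L_z(s)\ll \sum_{q\ge 1}\frac{1}{q^{s-\epsilon}}<\infty,$$
since $s-\epsilon>1$. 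Observe that the coprimality restriction $\gcd(q,n)=1$ only discards nonnegative terms and is therefore harmless in this direction.

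For the divergence direction, I would assume $s\le 1$ and restrict the summation to \emph{prime} values of $q$ coprime to $n$. For such $q$ one has $\omega(q)=1$, so that
$$L_z(s)\ge z\sum_{\underset{\pi\nmid n}{\pi\in\pr}}\frac{1}{\pi^{s}}\ge z\sum_{\underset{\pi\nmid n}{\pi\in\pr}}\frac{1}{\pi},$$
the last inequality using $s\le 1$. Since the series of the reciprocals of the primes diverges, and removing the finitely many primes dividing $n$ does not affect this, the right-hand side is infinite; hence $L_z(s)=+\infty$.

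I do not anticipate a genuine obstacle: the statement reduces to a routine comparison once Lemma~\ref{omega} is invoked, the only external input being the divergence of $\sum_{\pi}1/\pi$ used in the second half. A more conceptual alternative would be to exploit the multiplicativity of the arithmetic function $q\mapsto z^{\omega(q)}$ in order to write $L_z(s)$ as the Euler product $\prod_{\pi\nmid n}\bigl(1+z/(\pi^{s}-1)\bigr)$, whose convergence is equivalent to that of $\sum_{\pi\nmid n}z\,\pi^{-s}$ and is thus again governed by the threshold $s=1$; but the elementary comparison above is shorter and relies only on the tools already assembled in this section.
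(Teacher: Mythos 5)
Your proof is correct, but it takes a genuinely different and more elementary route than the paper's. The paper treats $L_z(s)$ as a Dirichlet series with coefficients $\chi_n(q)z^{\omega(q)}$, where $\chi_n$ is the principal character modulo $n$, exploits the multiplicativity of these coefficients to obtain the Euler product $\prod_{\pi\nmid n}\bigl(1+z/(\pi^s-1)\bigr)$ (legitimate for positive terms even without prior convergence), takes logarithms to reduce everything to the convergence of $\sum_{\pi\equiv l\,(\mathrm{mod}\,n)}\pi^{-s}$ for each $l\in(\Z/n\Z)^{\times}$, and then invokes Dirichlet's theorem on primes in arithmetic progressions to get the asymptotic $\frac{1}{\varphi(n)}\log\frac{1}{s-1}$ as $s\to 1^{+}$. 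You replace all of this with two direct comparisons: for $s>1$ you dominate $z^{\omega(q)}$ by $m^{\omega(q)}=o(q^{\epsilon})$ via Lemma~\ref{omega} (with the trivial bound when $z\le 1$), and for $s\le 1$ you minorize by the prime terms and use only Euler's classical divergence of $\sum_{\pi\in\pr}1/\pi$, unaffected by deleting the finitely many primes dividing $n$. What the paper's heavier machinery buys is quantitative information --- the rate of blow-up of $L_z(s)$ as $s\to 1^{+}$ and its uniformity over residue classes --- none of which is needed for the stated convergence dichotomy; your version is shorter and self-contained. Your closing remark correctly anticipates the paper's Euler-product route, and it is worth noting that even within that route Dirichlet's theorem is dispensable: by positivity the product's convergence reduces to that of $\sum_{\pi\nmid n}\pi^{-s}$, which your elementary prime-sum observation already settles.
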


\begin{proof}
Let $\chi_{n}$ be the Dirichlet principal character modulo $n$, i.e. for an integer $q\ge 1$, 
$$
\chi_n(q) = \left\{
    \begin{array}{ll}
        1 & \mbox{if } \gcd(n,q)=1\\
        0 & \mbox{otherwise.}
    \end{array}
\right.
$$

Then $$L_z (s)= \sum_{q\ge 1}\frac{\chi_n(q) z^{\omega(q)}}{q^s}\cdotp$$ Since $\chi_n(q)z^{\omega(q)}$ is a multiplicative arithmetical function, $L_z (s)$ admits an Euler product expansion given by 
\begin{align}\label{fctl}
L_z (s) = \prod_{\pi\in\pr}\left( 1+\sum_{k=1}^{+\infty}\frac{\chi_{n}(\pi)z^{\omega(\pi)}}{\pi^{ks}}\right) = \prod_{\underset{\gcd(\pi,n)=1}{\pi\in\pr}}\left(1+\frac{z}{\pi^s-1} \right).
\end{align} 
Since only positive  quantities are considered, $L_z(s)$ converges if, and only if, the right--hand side of~(\ref{fctl}) converges. Taking the logarithm of these equations, $L_z(s)$ is seen to converge if, and only if, $$\sum_{l \in \left(\Z/n\Z\right)^{\times}} \sum_{\underset{\pi\equiv  l\! \imod{n}}{\pi \in \pr}}\frac{1}{\pi^s}$$ converges, which is the case if, and only if, for all $l \in \left(\Z/n\Z\right)^{\times}$, $$\sum_{\underset{\pi\equiv l \! \imod{n}}{\pi \in \pr}}\frac{1}{\pi^s}$$ converges. By Dirichlet's theorem on arithmetic progressions, for all $l \in \left(\Z/n\Z\right)^{\times}$, $$\sum_{\underset{\pi\equiv l \!\imod{n}}{\pi \in \pr}}\frac{1}{\pi^s} \; \underset{s \rightarrow 1^+}{\sim}\;\frac{1}{\varphi(n)}\log\left(\frac{1}{s-1}\right).$$ This completes the proof.
\end{proof}

\subsection{Optimality of the lower bound $d+1$}\label{subsec4.2}

The divergence of the series $\sum_{q\ge 1} \lambda\left(J^*_{\tau}(q) \right)$ for $\tau \le d+1$ does not guarantee that the set $I^{*}_{\tau}(P)$ is not of Lebesgue measure zero, in which case the bound $d+1$ appearing in the statement of Theorem~\ref{princi} could be trivially improved. This problem is now tackled by showing, as mentioned in the discussion held in subsection~\ref{premier2}, that the subset $\tilde{I}_{\tau}(P)$ of $I^{*}_{\tau}(P)$ as defined by~(\ref{Itildetau}) has full measure whenever $\tau\le d+1$. 

To this end, the author considered in~\cite{extensionduffschaeff} the Theorem of Duffin and Schaeffer in Diophantine approximation~\cite{dskp}, which generalizes the classical theorem of Khintchine to the case of any error function under the assumption that all the rational approximants are irreducible. He extended it to the case where the numerators and the denominators of the rational approximants were related by a congruential constraint stronger than coprimality (\cite{extensionduffschaeff}, Theorem~1.2). As a corollary of this extension, setting for all integers $q\ge 1$ 
\begin{equation}\label{sdq}
s_d(q):= \frac{e_d(q)}{q}
\end{equation} 
(see Lemma~\ref{multiruedq} and Proposition~\ref{decompte} for an expression of $e_d(q)$), the following result was also obtained~:

\begin{thm}(\cite{extensionduffschaeff}, Corollary~1.4)\label{varduffinschaeffer}
Let $(q_k)_{k\ge 1}$ be a strictly increasing sequence of positive integers and let $(\alpha_k)_{k\ge 1}$ be a sequence of positive real numbers. Assume that~:
\begin{align*}
& \textbf{(a)} \quad \sum_{k=1}^{+\infty} \alpha_{k} = +\infty , \\
& \textbf{(b)} \quad \sum_{k=1}^{n} \alpha_k s_d\left( q_k^d \right) > c\sum_{k=1}^{n} \alpha_k \;\; \textrm{ for infinitely many positive integers } n \textrm{ and a real number } c>0, \\
& \textbf{(c)} \quad \gcd(q_k , a_d)=1 \;\; \textrm{ for all } k\ge 1. 
\end{align*}
Then for almost all $\alpha \in\R$, there exist arbitrarily many relatively prime integers $b_k$ and $q_k$ such that 
\begin{align*}
\left|\alpha - \frac{b_k}{q_k^d} \right| < \frac{\alpha_k}{q_k^d}\quad \textrm{and} \quad b_k\in a_d G_d^{\times}(q_k),
\end{align*}
where $G_d^{\times}(q_k)$ was defined at the same time as the set $\tilde{I}_{\tau}(P)$ by~(\ref{Itildetau}).
\end{thm}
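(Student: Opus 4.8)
This is a weighted Duffin--Schaeffer statement in which the admissible numerators are forced to lie in $a_d G_d^{\times}$ rather than merely to be coprime to the denominator, so the natural route is the second moment (variance) method applied to the $\limsup$ set of the approximating intervals, with \textbf{(a)} and \textbf{(b)} playing the roles of the divergence and of the $\limsup$--ratio hypotheses in the original argument of Duffin and Schaeffer. Setting $Q_k:=q_k^d$, I would in fact prove the slightly stronger conclusion that, for almost every $\alpha$, there are infinitely many $k$ and integers $b_k\in a_d G_d^{\times}(Q_k)$ (primitive $d^{th}$ power residues modulo $Q_k$, up to the factor $a_d$) with $|\alpha-b_k/Q_k|<\alpha_k/Q_k$: such a $b_k$ reduces modulo $q_k$ to an element of $a_d G_d^{\times}(q_k)$ and is automatically coprime to $q_k$, so this implies the statement as written. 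Accordingly, put
$$E_k := \bigcup_{\substack{0\le b<Q_k \\ b\,\in\, a_d G_d^{\times}(Q_k)}} \left(\frac{b}{Q_k}-\frac{\alpha_k}{Q_k},\ \frac{b}{Q_k}+\frac{\alpha_k}{Q_k}\right),$$
so that the set to be studied is $\limsup_k E_k$. Each $E_k$ being $1$--periodic it suffices to show this set has full Lebesgue measure in $[0,1]$, and since the conclusion is trivial once $\alpha_k$ exceeds a fixed constant (the intervals then cover $[0,1]$) I would assume $\alpha_k\le 1/2$ throughout.

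First I would evaluate $\lambda(E_k\cap[0,1])$. The number of admissible residues is $|a_d G_d^{\times}(Q_k)|=e_d(Q_k)$, since multiplication by the unit $a_d$ (here $\gcd(a_d,q_k)=1$ by \textbf{(c)}) permutes $(\Z/Q_k\Z)^{\times}$; for $\alpha_k\le 1/2$ the corresponding intervals are pairwise disjoint, whence
$$\lambda(E_k\cap[0,1]) = \frac{2\alpha_k\,e_d(Q_k)}{Q_k} = 2\alpha_k\,s_d(q_k^d)$$
by the very definition~(\ref{sdq}) of $s_d$. Hypotheses \textbf{(a)} and \textbf{(b)} now combine to give $\sum_k \alpha_k s_d(q_k^d)=+\infty$ (the partial sums of this positive--term series exceed $c\sum_{k\le n}\alpha_k\to+\infty$ along a subsequence of $n$), so that $\sum_k \lambda(E_k\cap[0,1])=+\infty$; this is the divergence input. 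Moreover \textbf{(b)} asserts precisely that the ratio $\big(\sum_{k\le n}\lambda(E_k\cap[0,1])\big)\big/\big(\sum_{k\le n}2\alpha_k\big)$ stays bounded below along a subsequence, where $2\alpha_k$ is the measure one would obtain were \emph{all} residues modulo $Q_k$ admissible. This is exactly the Duffin--Schaeffer ratio condition.

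The crux is the quasi--independence estimate
$$\sum_{\substack{j,k\le n\\ j\ne k}} \lambda(E_j\cap E_k\cap[0,1]) \ \ll\ \Big(\sum_{k\le n}\lambda(E_k\cap[0,1])\Big)^2,$$
which I would derive by bounding, for $j\ne k$, the number of pairs of admissible numerators $b,b'$ whose fractions overlap, i.e. with $|bQ_k-b'Q_j|<\alpha_j Q_k+\alpha_k Q_j$. Clearing denominators turns this into the counting of power--residue solutions of a congruence modulo $\gcd(Q_j,Q_k)$, whose number is governed by the multiplicativity of $e_d$ (Lemma~\ref{multiruedq}, Proposition~\ref{decompte}) and by the divisor bounds $2^{\omega(q)}\tau(q)=o(q^{\epsilon})$ furnished by Lemmas~\ref{tau} and~\ref{omega}; the identity $e_d(Q_k)/Q_k=s_d(q_k^d)$ is what makes the main term factor as the product of the individual measures. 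Granted this estimate, the Chung--Erd\H{o}s form of the divergence Borel--Cantelli lemma yields $\lambda(\limsup_k E_k\cap[0,1])>0$, and Gallagher's zero--one law for such $\limsup$ sets upgrades positive measure to full measure, which is the desired conclusion. (Equivalently, one checks that the sequence $(Q_k)$ and the numerator sets $a_d G_d^{\times}(Q_k)$ satisfy the hypotheses of the general constrained Duffin--Schaeffer theorem of~\cite{extensionduffschaeff} and quotes it directly.)

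The main obstacle is the quasi--independence bound of the third step: controlling $\lambda(E_j\cap E_k)$ when $q_j$ and $q_k$ share large common factors is the hard core of every Duffin--Schaeffer--type result, and it is here further complicated by the arithmetic of the sets $a_d G_d^{\times}(q)$. The role of hypothesis \textbf{(b)} is precisely to keep the admissible measure a fixed positive proportion of the unconstrained measure $2\alpha_k$ along a subsequence, which provides the slack needed to run the classical Erd\H{o}s--Duffin--Schaeffer overlap argument without invoking the full (and far deeper) resolution of the Duffin--Schaeffer conjecture.
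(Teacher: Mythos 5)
Your proposal should first be measured against what the paper actually does: the paper contains no internal proof of Theorem~\ref{varduffinschaeffer} at all. As the label indicates, the statement is imported verbatim from the author's companion work (\cite{extensionduffschaeff}, Corollary~1.4), where it is deduced from a Duffin--Schaeffer theorem with congruential constraints (Theorem~1.2 of that paper). So the parenthetical remark closing your third step --- check the hypotheses of the constrained Duffin--Schaeffer theorem of \cite{extensionduffschaeff} and quote it --- is in fact the paper's entire ``proof'', and the bookkeeping you supply in support of that reduction is correct: with $Q_k=q_k^d$, hypothesis \textbf{(c)} gives $\gcd(a_d,Q_k)=1$, so multiplication by $a_d$ permutes $(\Z/Q_k\Z)^{\times}$ and $\lambda(E_k\cap[0,1])=2\alpha_k e_d(Q_k)/Q_k=2\alpha_k s_d(q_k^d)$ in accordance with~(\ref{sdq}); hypotheses \textbf{(a)} and \textbf{(b)} then deliver the divergence and the ratio condition; and an integer $b\in a_dG_d^{\times}(q_k^d)$ does reduce modulo $q_k$ to an element of $a_dG_d^{\times}(q_k)$ and is coprime to $q_k$, so your strengthened conclusion implies the stated one.

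As a standalone argument, however, the sketch has a genuine gap exactly where the difficulty lives. The quasi-independence bound $\sum_{j\ne k\le n}\lambda(E_j\cap E_k)\ll\bigl(\sum_{k\le n}\lambda(E_k)\bigr)^2$ is asserted in a single sentence, but for numerators confined to a coset of the $d^{th}$ powers of units this estimate \emph{is} the content of \cite{extensionduffschaeff}; it does not follow routinely from multiplicativity of $e_d$ plus the bounds $2^{\omega(q)}\tau(q)=o(q^{\epsilon})$. Concretely, counting solutions of $|bQ_k-b'Q_j|<\alpha_jQ_k+\alpha_kQ_j$ yields, besides a main term controlled by the \emph{unconstrained} product $\alpha_j\alpha_k$ (so that \textbf{(b)} must be invoked along its subsequence to convert to the constrained measures --- the inequality as you state it, with the constrained measures on the right and an absolute implied constant for every $n$, is not what the method gives), an error term of size about $\gcd(Q_j,Q_k)\min(\alpha_j/Q_j,\alpha_k/Q_k)$ coming from the residue count modulo $\gcd(Q_j,Q_k)$; taming the sum of these terms over pairs with large common factors, now with power-residue densities $e_d$ and root counts $u_d$ in place of $\varphi$, is the hard core that your gesture at ``multiplicativity and divisor bounds'' does not address. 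Separately, Gallagher's zero-one law as usually stated concerns approximation by \emph{all} reduced fractions; it does not apply off the shelf when the numerators are restricted to $a_dG_d^{\times}(Q_k)$, so the upgrade from positive to full measure requires either the extension established in \cite{extensionduffschaeff} or the standard localization argument (uniform quasi-independence on subintervals plus the Lebesgue density theorem), neither of which is carried out. In short: the architecture and the reductions are right, and the citation route you mention parenthetically coincides with the paper's, but the two load-bearing steps of the from-scratch proof are named rather than proved.
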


One can deduce from Theorem~\ref{varduffinschaeffer} a stronger result than the one required to prove that the set $\tilde{I}_{\tau}(P)$ has full Lebesgue measure when $\tau \le d+1$~:

\begin{coro}\label{infinit}
Let $s\in (0,1]$ and let $m$ be a positive integer.

Then for almost all $\alpha\in\R$, there exist infinitely many integers $q$ and $b$, $q\ge 1$, satisfying 
\begin{center}
\begin{tabular}{c l}
\textbf{(i)} &\quad $\left|\alpha - \frac{b}{q^d} \right| < \frac{1}{q^{d+s}}$ , \rule[-7pt]{0pt}{20pt}\\
\textbf{(ii)} &\quad $b\in a_d G_d^{\times}(q)$ , \rule[-7pt]{0pt}{20pt}\\
\textbf{(iii)} &\quad $\gcd(q, da_d) =1$ , \rule[-7pt]{0pt}{20pt}\\
\textbf{(iv)}  &\quad $\omega(q) \le m$. \rule[-7pt]{0pt}{20pt}\\
\end{tabular}
\end{center}

In particular, $\lambda(\tilde{I}_{\tau}(P)) = \lambda(I^{*}_{\tau}(P))=1$ when $\tau \le d+1$.
\end{coro}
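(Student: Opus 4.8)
The strategy is to apply Theorem~\ref{varduffinschaeffer} to a carefully chosen sequence $(q_k)$ designed so that conditions \textbf{(a)}--\textbf{(c)} hold and so that every $q_k$ automatically satisfies the bounded--prime--factor constraint \textbf{(iv)}. The key observation is that for the approximating function $\Psi(q)=q^{-(d+s)}$, writing $\alpha_k := q_k^{d}\,\Psi(q_k)=q_k^{-s}$ turns condition~\textbf{(i)} into $|\alpha - b_k/q_k^d|<\alpha_k/q_k^d$, exactly the conclusion furnished by Theorem~\ref{varduffinschaeffer}. So the whole problem reduces to selecting a sequence $(q_k)$ for which $\sum_k \alpha_k = \sum_k q_k^{-s}$ diverges while simultaneously keeping $\omega(q_k)\le m$ and $\gcd(q_k,da_d)=1$.

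First I would fix a prime $p_0$ with $\gcd(p_0, da_d)=1$ and take $q_k = p_0^{\,k}$, a geometric sequence along a single prime. This guarantees $\omega(q_k)=1\le m$ (giving \textbf{(iv)}) and $\gcd(q_k,da_d)=1$ (giving \textbf{(iii)} and hypothesis~\textbf{(c)}) for free. However, with $q_k=p_0^k$ the sum $\sum_k q_k^{-s}=\sum_k p_0^{-ks}$ \emph{converges}, violating~\textbf{(a)}. To repair this I would instead range over \emph{all} integers $q$ supported on primes drawn from a fixed finite set $\{p_1,\dots,p_m\}$ of primes each coprime to $da_d$; the number of such $q$ up to $x$ grows like a power of $\log x$, which is enough to force $\sum q^{-s}$ to diverge for every $s\le 1$ while still bounding $\omega(q)\le m$. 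Then I would verify hypothesis~\textbf{(b)}: using $s_d(q^d)=e_d(q^d)/q^d$ together with the formula $e_d(\pi^k)=\varphi(\pi^k)/u_d(\pi^k)$ from Proposition~\ref{decompte} and the uniform bound $u_d(q)\le(2d)^{\omega(q)}$ from~(\ref{majudq}), the ratio $s_d(q_k^d)$ stays bounded below by a positive constant \emph{uniformly in $k$}, because $\omega(q_k)\le m$ is bounded; hence $\sum_{k\le n}\alpha_k s_d(q_k^d)\ge c\sum_{k\le n}\alpha_k$ for a fixed $c>0$ and all $n$, which is even stronger than~\textbf{(b)}.

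With the sequence chosen, Theorem~\ref{varduffinschaeffer} yields, for almost all $\alpha$, infinitely many coprime pairs $(b,q)$ with $q$ in the chosen family satisfying~\textbf{(i)} and \textbf{(ii)}; conditions \textbf{(iii)} and \textbf{(iv)} hold by construction of the family. This proves the first assertion of the corollary. For the final sentence, I would argue that this conclusion, specialized to $s=d+1-\tau\in[0,1)$ (so that $d+s=\tau$), shows precisely that almost every $\alpha$ lies in $\tilde{I}_{\tau}(P)$ when $\tau\le d+1$; combined with the inclusion $\tilde{I}_{\tau}(P)\subset I^{*}_{\tau}(P)$ from~(\ref{Ietoiletau})--style reasoning, both sets then have full measure. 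The edge case $\tau=d+1$, i.e.\ $s=1$, must be handled separately since then $\alpha_k=q_k^{-1}$ and the divergence of $\sum q^{-1}$ over a finitely--generated multiplicative family is more delicate; this is where I expect the main technical obstacle to lie.

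The hardest step will be verifying the divergence in~\textbf{(a)} at the boundary $s=1$ while retaining the bound $\omega(q)\le m$: one cannot use arbitrarily many primes, so the family must be chosen so that $\sum_{q}q^{-1}$ over integers with at most $m$ prime factors from a fixed set still diverges, which forces $m$ to be taken large enough (depending on nothing but the requirement $s\le 1$) and requires a counting estimate on smooth--type numbers. Once that divergence is secured, the uniform lower bound on $s_d(q_k^d)$ in~\textbf{(b)} follows from the boundedness of $\omega(q_k)$ as indicated, and the rest is a direct appeal to Theorem~\ref{varduffinschaeffer}.
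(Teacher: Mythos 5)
Your overall strategy is the paper's: apply Theorem~\ref{varduffinschaeffer} with $\alpha_k=q_k^{-s}$, secure hypothesis \textbf{(b)} via the uniform lower bound $s_d(q_k^d)=\varphi(q_k^d)/\bigl(q_k^d\,u_d(q_k^d)\bigr)\ge (4d)^{-m}$ coming from $\omega(q_k)\le m$ together with~(\ref{majudq}), and build \textbf{(iii)}--\textbf{(iv)} into the choice of the sequence. However, your choice of the sequence contains a genuine gap that breaks the argument. You take for $(q_k)$ all integers supported on a \emph{fixed finite set} of primes $\{p_1,\dots,p_m\}$ coprime to $da_d$, and claim that $\sum_k q_k^{-s}$ then diverges for $s\le 1$ because the counting function of the family grows like a power of $\log x$. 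This is false: for such a finitely generated multiplicative family the sum in hypothesis \textbf{(a)} is the finite Euler product
\begin{equation*}
\sum_{k\ge 1} q_k^{-s} \;=\; \prod_{i=1}^{m}\left(1-p_i^{-s}\right)^{-1} \;<\; +\infty,
\end{equation*}
which converges for \emph{every} $s>0$, not only at the boundary $s=1$. Indeed your own growth estimate refutes the claim: if $N(x)\ll(\log x)^m$, then partial summation gives $\sum_{q_k\le x}q_k^{-1}\ll \int_1^{\infty}(\log t)^m\,t^{-2}\,dt<\infty$. Consequently condition \textbf{(a)} of Theorem~\ref{varduffinschaeffer} fails for your family for all $s\in(0,1]$, and no enlargement of $m$ can repair this, since any fixed finite set of primes generates a semigroup with convergent $\sum q^{-s}$. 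The correct repair is to bound only the \emph{number} of prime factors, not the primes themselves: take $(q_k)$ to be the increasing enumeration of $\left\{n\in\N^{*}\;:\;\gcd(n,da_d)=1\ \mbox{and}\ \omega(n)\le m\right\}$, as the paper does. This family still yields \textbf{(iii)}--\textbf{(iv)} by construction and preserves your uniform bound $s_d(q_k^d)\ge(4d)^{-m}$, but it now contains every prime $\pi\nmid da_d$, so that
\begin{equation*}
\sum_{k\ge 1}\alpha_k \;\ge\; \sum_{\underset{\pi\nmid da_d}{\pi\in\pr}}\frac{1}{\pi^{s}},
\end{equation*}
which diverges for all $s\in(0,1]$ by the divergence of the sum of the reciprocals of the primes. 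Thus the case $s=1$, which you flagged as the main technical obstacle requiring a counting estimate on smooth-type numbers, is in fact immediate once the family is chosen correctly.

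A smaller point concerns your deduction of the final assertion. Setting $s=d+1-\tau$ is both unnecessary and problematic: it equals $0$ at $\tau=d+1$ (outside the admissible range $(0,1]$), which is what forces your separate edge case, and it exceeds $1$ when $\tau<d$. Since $\tau\le d+1$ implies $q^{-(d+1)}\le q^{-\tau}$, the single choice $s=1$ already shows that almost every $\alpha$ satisfies the defining condition of $\tilde{I}_{\tau}(P)$ for every $\tau\le d+1$; combined with the inclusion $\tilde{I}_{\tau}(P)\subset I^{*}_{\tau}(P)$ from subsection~\ref{premier2}, both sets have full measure, with no edge case to handle.
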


\begin{proof}
Maintaining the notation of Theorem~\ref{varduffinschaeffer}, choose for the sequence $\left(q_k \right)_{k\ge 1}$ the successive elements of the set $\left\{n \in \N^* \; : \; \gcd(n, da_d)=1 \;\mbox{ and }\; \omega(n)\le m \right\}$ ordered increasingly and for $\left(\alpha_k\right)_{k\ge 1}$ the sequence $\left(1/q_k^s\right)_{k\ge 1}$.

Then $$\sum_{k\ge 1} \alpha_{k} \ge \sum_{\underset{\pi \nmid da_d}{\pi\in\pr}}\frac{1}{\pi^s}$$ and the right--hand side is a divergent series for $s\in (0,1]$. Furthermore, from~(\ref{majorationrdq}) and~(\ref{sdq}) on the one hand and from the choice of the sequence $\left(q_k \right)_{k\ge 1}$ on the other, for any positive integer $k$, $$s_{d}(q_k^d) = \frac{\varphi\left(q_k^d\right)}{q_k^d \, u_d\left(q_k^d\right)} \ge \frac{1}{(4d)^{m}}>0.$$

Theorem~\ref{varduffinschaeffer} completes the proof.
\end{proof}

\begin{rem}
It is not difficult to see that, for almost all $\alpha \in \R$, the sequence of denominators $(q_k)_{k\ge 1}$ in Corollary~\ref{infinit} may be chosen in such a way that (i), (ii) and (iii) hold and such that the sequence $\left(\omega\left(q_k \right)\right)_{k\ge 1}$ is unbounded. Indeed, define first for any positive integer $m$ the sequence $(n_{m,k})_{k\ge 1}$ as being the sequence of the successive elements of the set $$\left\{n \in \N^* \; : \; \gcd(n, da_d)=1 \;\mbox{ and }\; \omega(n)= m \right\}$$ ordered increasingly. Let $(\alpha_{m,k})_{k\ge 1}$ be the sequence $(1/n_{m,k}^s)_{k\ge 1}$, where $s\in (0,1]$, and let $$D_m := \left\{\alpha \in (0,1] \; : \; (i), (ii) \textrm{ and } (iii) \textrm{ hold true with } \omega(q) = m,\, i.o\right\}.$$ 

Denote by $(\pi_i)_{i\ge 1}$ the increasing sequence of primes. Since for all $k\ge 1$, $$s_d(n_{m,k}^d)  \ge \frac{1}{(4d)^{m}}>0 \;\textrm{ and }\; \sum_{k\ge 1} \alpha_{m,k} \, \ge \sum_{\underset{\pi_{i_j} \nmid 2da_d}{1\le i_1 < \dots < i_m }}\frac{1}{(\pi_{i_1}\dots \pi_{i_m})^s},$$
which is a divergent series, a similar reasoning to that of the proof of Corollary~\ref{infinit} shows that $\lambda(D_m)=1$ for any $m\in\N^*$. Then $\lambda\left(\cap_{m\ge 1} D_m \right) = 1$, hence in particular the result.
\end{rem}

\section{Upper bound for the Hausdorff dimension of $W_{\tau}(P_{\alpha})$ when $\tau$ lies in the interval $(d , d+1]$}\label{sec5}

Theorem~\ref{sec} shall be proven in this section after the study of the asymptotic behavior of the number of solutions of Diophantine inequalities.

\subsection{Asymptotic behavior of the number of solutions of Diophantine inequalities}\label{111}

Given a sequence of intervals $\left(I_q\right)_{q\ge 1}$ inside the unit interval and a real number $\alpha$, let $\mathcal{N}\left(Q, \alpha \right)$ denote the number of integers $q\le Q$ such that $q\alpha \in I_q \imod{1}$, that is,
\begin{align}\label{nqalpha}
\mathcal{N}\left(Q, \alpha \right) := \textrm{Card} \left\{q \in \llbracket 1 , Q \rrbracket\; : \; q\alpha \in I_q \imod{1} \right\}.
\end{align}
The asymptotic behavior of $\mathcal{N}\left(Q, \alpha \right)$ as $Q$ tends to infinity has been studied by Sprind{\v{z}}uk who exploited ideas from the works of W. Schmidt and H. Rademacher in the theory of orthogonal series (see~\cite{sprin} for further details).

\begin{thm}(\cite{sprin}, Theorem~18)\label{estimnaalphaerreur}
Let $\left(I_q\right)_{q\ge 1}$ be a sequence of intervals inside the unit interval $[0, 1]$ such that $$\sum_{q=1}^{+\infty} \lambda\left(I_q \right) = +\infty.$$
For any real number $\alpha$, define $\mathcal{N}\left(Q, \alpha \right)$ as in~(\ref{nqalpha}).

Then, for almost all $\alpha \in \R$, $$\mathcal{N}\left(Q, \alpha \right) = \Phi (Q) + O\left(\sqrt{\Psi(Q)} \left(\log \Psi (Q) \right)^{3/2+\kappa} \right),$$ where
\begin{align*}
\Phi (Q) := \sum_{q=1}^{Q} \lambda\left( I_q\right), \quad \Psi (Q):= \sum_{q=1}^{Q} \lambda\left( I_q\right)\tau(q)
\end{align*}
and $\kappa >0$ is arbitrary.
\end{thm}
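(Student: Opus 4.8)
The plan is to write $\mathcal{N}(Q,\alpha)$ as a sum of quasi-orthogonal functions, compute its mean (which furnishes $\Phi(Q)$), control its variance in blocks (this is where $\Psi$ and the divisor function enter), and finally invoke a general metric theorem on the almost-everywhere behaviour of sums of quasi-orthogonal functions to obtain the pointwise error term. Since $q\alpha\imod 1$ is unchanged when $\alpha$ is replaced by $\alpha+1$, the whole problem is periodic in $\alpha$ of period one; it therefore suffices to prove the estimate for almost all $\alpha$ in the circle $\R/\Z$, identified with $[0,1]$. For each $q\ge 1$ set $f_q(\alpha)=\mathbf{1}_{I_q}(q\alpha\imod 1)$, so that $\mathcal{N}(Q,\alpha)=\sum_{q=1}^{Q}f_q(\alpha)$. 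The change of variables $\beta=q\alpha\imod 1$ (the map $\alpha\mapsto q\alpha$ covers the circle $q$ times) gives $\int_0^1 f_q\,\mathrm{d}\alpha=\lambda(I_q)$, so $\Phi(Q)=\sum_{q\le Q}\int_0^1 f_q$ is exactly the expected value of $\mathcal{N}(Q,\cdot)$; the main term is thus automatic, and the statement reduces to proving that the centred sum $\sum_{q=1}^{Q}\bigl(f_q-\lambda(I_q)\bigr)$ is $O\!\left(\Psi(Q)^{1/2}(\log\Psi(Q))^{3/2+\kappa}\right)$ for almost all $\alpha$.

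The analytic heart of the argument is a block-additive variance bound: for all integers $1\le M<N$,
$$\int_0^1 \Bigl(\sum_{q=M+1}^{N}\bigl(f_q(\alpha)-\lambda(I_q)\bigr)\Bigr)^2 \mathrm{d}\alpha \;\ll\; \sum_{q=M+1}^{N}\lambda(I_q)\,\tau(q).$$
I would establish this by Fourier analysis on the circle. Writing $\mathbf{1}_{I_q}(t)=\sum_n c_{q,n}\,e^{2\pi i n t}$ with $c_{q,0}=\lambda(I_q)$ and the standard bound $|c_{q,n}|\ll\min\!\left(\lambda(I_q),|n|^{-1}\right)$ for $n\ne 0$, one has $f_q(\alpha)-\lambda(I_q)=\sum_{n\ne 0}c_{q,n}\,e^{2\pi i n q\alpha}$. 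Orthogonality of the characters shows that the covariance $\int_0^1(f_q-\lambda(I_q))\overline{(f_{q'}-\lambda(I_{q'}))}$ is supported on frequency coincidences $nq=n'q'$; setting $g=\gcd(q,q')$, $q=gq_1$, $q'=gq_2$ with $\gcd(q_1,q_2)=1$, these force $n=q_2 k$ and $n'=q_1 k$, and the covariance collapses to $\sum_{k\ne 0}c_{q,q_2 k}\,\overline{c_{q',q_1 k}}$. The diagonal term $q=q'$ equals $\sum_{n\ne 0}|c_{q,n}|^2=\lambda(I_q)-\lambda(I_q)^2\le\lambda(I_q)$ by Parseval, and summing it over $q$ already produces $\Phi(N)-\Phi(M)\le\Psi(N)-\Psi(M)$. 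The off-diagonal contributions are then estimated by combining the two Fourier bounds on $c_{q,q_2 k}$ and $c_{q',q_1 k}$ and organising the double sum according to the common factor $g$: counting the admissible pairs $(q_1,q_2)$ attached to a fixed index is a divisor count, and this is precisely what converts the naive weight $1$ into the weight $\tau(q)$ on the right-hand side.

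With the variance bound in hand, the last step is to upgrade the $L^2$ control to a pointwise almost-everywhere statement. For this I would invoke the classical Gál--Koksma metric theorem (the form of the Menshov--Rademacher principle adapted to quasi-orthogonal systems): if the $f_q$ are measurable with means $\mu_q=\int_0^1 f_q$ and there are nonnegative weights $\psi_q$ such that, for all $M<N$,
$$\int_0^1\Bigl(\sum_{q=M+1}^{N}(f_q-\mu_q)\Bigr)^2\mathrm{d}\alpha \;\ll\; \sum_{q=M+1}^{N}\psi_q \quad\text{with}\quad \sum_{q\ge 1}\psi_q=+\infty,$$
then $\sum_{q=1}^{N}(f_q-\mu_q)=O\!\left(\Psi(N)^{1/2}(\log\Psi(N))^{3/2+\kappa}\right)$ for almost all $\alpha$ and every $\kappa>0$, where $\Psi(N)=\sum_{q\le N}\psi_q$. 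Taking $\mu_q=\lambda(I_q)$ and $\psi_q=\lambda(I_q)\tau(q)$ (so that $\Psi$ is exactly the function in the statement, and $\sum\psi_q\ge\sum\lambda(I_q)=+\infty$ because $\tau(q)\ge 1$) yields precisely the announced error term; the exponent $3/2+\kappa$ is the intrinsic logarithmic loss of the Menshov--Rademacher method carried by that theorem.

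The step I expect to be the main obstacle is the off-diagonal part of the variance bound, and specifically the extraction of the exact weight $\tau(q)$. Because the intervals $I_q$ are completely arbitrary, one is confined to the generic Fourier estimate $|c_{q,n}|\ll\min(\lambda(I_q),|n|^{-1})$, and the whole difficulty is to balance the decay $|n|^{-1}$ against the number of divisor pairs arising from the frequency-matching condition $nq=n'q'$, while keeping the tails of the Fourier series under control (which may require truncating or lightly smoothing $\mathbf{1}_{I_q}$ and checking that the resulting error is absorbed). This delicate bookkeeping is what pins down the arithmetical weight $\tau(q)$ entering $\Psi$, and hence the precise shape of the error term.
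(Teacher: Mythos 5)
The paper does not actually prove this statement: it is imported verbatim from Sprind\v{z}uk (\cite{sprin}, Theorem~18), whose proof --- as the paper itself remarks --- rests on the orthogonal-series ideas of W.~Schmidt and H.~Rademacher. Your outline (computing the mean of $\sum_{q\le Q}\mathbf{1}_{I_q}(q\alpha \bmod 1)$ to get $\Phi$, establishing the block variance bound $\ll \sum_{M<q\le N}\lambda(I_q)\tau(q)$ via the frequency-matching condition $nq=n'q'$ organised by $\gcd(q,q')$ --- which is exactly Sprind\v{z}uk's key lemma, and where the divisor bookkeeping you flag as the obstacle does go through, since the logarithmic losses of the termwise bounds $|c_{q,n}|\ll\min\left(\lambda(I_q),|n|^{-1}\right)$ average out when each pair is assigned to its larger member --- and then invoking the G\'al--Koksma/Rademacher--Menshov theorem to convert the $L^2$ bound into the almost-everywhere error $O\bigl(\Psi(Q)^{1/2}(\log\Psi(Q))^{3/2+\kappa}\bigr)$) is precisely that argument, so your proposal follows essentially the same route as the cited proof.
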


The notation of Theorem~\ref{estimnaalphaerreur} is maintained in the next corollary.

\begin{coro}\label{equivnqalpha}
Under the assumptions of Theorem~\ref{estimnaalphaerreur}, suppose that one of the following conditions holds~:
\begin{align*}
& \textbf{(i)} \quad \Phi (Q) \gg Q^{\delta} \textrm{ for all } Q>0,\textrm{ for some } \delta >0 .\\
& \textbf{(ii)} \quad \lambda\left(I_q \right) \textrm{ decreases monotonically and } \Phi(Q) \gg \left( \log Q \right)^{1+\delta} \textrm{ for all } Q>0,\textrm{ for some } \delta >0.
\end{align*}
Then $$\mathcal{N}\left(Q, \alpha \right) \sim \sum_{q=1}^{Q} \lambda \left(I_q \right) \quad \mbox{ as } Q \rightarrow +\infty.$$
\end{coro}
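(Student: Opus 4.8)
The plan is to deduce the corollary directly from the asymptotic formula of Theorem~\ref{estimnaalphaerreur}. Since the divergence hypothesis $\sum_{q}\lambda(I_q)=+\infty$ forces $\Phi(Q)\to+\infty$, the claimed equivalence $\mathcal{N}(Q,\alpha)\sim\Phi(Q)$ holds for almost every $\alpha$ as soon as one shows that the error term is negligible against the main term, i.e.
$$\sqrt{\Psi(Q)}\,\bigl(\log\Psi(Q)\bigr)^{3/2+\kappa}=o\!\left(\Phi(Q)\right)\qquad(Q\to\infty).$$
Everything therefore reduces to controlling $\Psi(Q)=\sum_{q\le Q}\lambda(I_q)\tau(q)$ in terms of $\Phi(Q)=\sum_{q\le Q}\lambda(I_q)$, and the two hypotheses \textbf{(i)} and \textbf{(ii)} will be handled by two different such bounds.

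Under hypothesis \textbf{(i)} I would use only the crude estimate $\tau(q)=o\!\left(q^\epsilon\right)$ from Lemma~\ref{tau}: for any fixed $\epsilon>0$ this yields $\Psi(Q)\ll Q^{\epsilon}\Phi(Q)$, whence $\sqrt{\Psi(Q)}\ll Q^{\epsilon/2}\sqrt{\Phi(Q)}$ and, since $\Phi(Q)\le Q$, also $\log\Psi(Q)\ll\log Q$. The error term is then $\ll Q^{\epsilon/2}\sqrt{\Phi(Q)}\,(\log Q)^{3/2+\kappa}$, so dividing by $\Phi(Q)\gg Q^{\delta}$ leaves a bound $\ll Q^{(\epsilon-\delta)/2}(\log Q)^{3/2+\kappa}$; choosing $\epsilon<\delta$ makes this tend to $0$. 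This case is essentially immediate.

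Hypothesis \textbf{(ii)} is the genuinely delicate one, because $\Phi(Q)$ may grow only polylogarithmically, in which case the bound $\log\Psi(Q)\ll\log Q$ is far too lossy. Here I would exploit the monotonicity of $\lambda(I_q)$ through Abel summation against the partial sums $T(Q):=\sum_{q\le Q}\tau(q)\ll Q\log Q$ (Lemma~\ref{tau}). Writing $a_q:=\lambda(I_q)$, summation by parts gives $\Psi(Q)=a_Q\,T(Q)+\sum_{q<Q}T(q)(a_q-a_{q+1})$, and since $T(q)\ll q\log q\le q\log Q$ while the rearrangement $\sum_{q<Q}q(a_q-a_{q+1})=\sum_{q<Q}a_q-(Q-1)a_Q\le\Phi(Q)$ holds together with $Q\,a_Q\le\Phi(Q)$ (monotonicity), one obtains the sharp bound $\Psi(Q)\ll\Phi(Q)\log Q$. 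Consequently $\log\Psi(Q)\le\log\Phi(Q)+\log\log Q+O(1)$.

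With these two estimates the error term divided by $\Phi(Q)$ is
$$\ll\frac{\sqrt{\log Q}\,\bigl(\log\Phi(Q)+\log\log Q\bigr)^{3/2+\kappa}}{\sqrt{\Phi(Q)}},$$
and I would finish by bounding the right-hand side uniformly over the admissible range $(\log Q)^{1+\delta}\le\Phi(Q)\le Q$: for $Q$ large the auxiliary function $\Phi\mapsto\bigl(\log\Phi+\log\log Q\bigr)^{3/2+\kappa}\Phi^{-1/2}$ is decreasing, so its maximum is attained at the lower endpoint $\Phi=(\log Q)^{1+\delta}$, where the displayed quantity is $\ll(\log\log Q)^{3/2+\kappa}(\log Q)^{-\delta/2}\to0$. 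This gives the required $o(\Phi(Q))$ and closes the proof. The main obstacle is precisely this final step: one must arrange that $\log\Psi(Q)$ collapses to the order of $\log\log Q$ when $\Phi(Q)$ is small, while the crude bound $\log\Psi(Q)\ll\log Q$ already suffices when $\Phi(Q)$ is large, and it is the monotonicity-based estimate $\Psi(Q)\ll\Phi(Q)\log Q$ that makes both regimes go through at once.
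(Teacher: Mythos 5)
Your proof is correct and follows essentially the same route as the paper: case \textbf{(i)} via the divisor bound $\tau(q)\ll q^{\epsilon}$, and case \textbf{(ii)} via Abel summation against $\sum_{k\le q}\tau(k)\ll q\log q$ to obtain $\Psi(Q)\ll\Phi(Q)\log Q$. The paper dispatches case \textbf{(ii)} with ``the conclusion follows in this case also,'' so your explicit endpoint analysis showing that $\log\Psi(Q)$ collapses to order $\log\log Q$ precisely when $\Phi(Q)$ is small is a welcome filling-in of a step the paper leaves implicit (and which the crude bound $\log\Psi(Q)\ll\log Q$ alone would not settle for small $\delta$).
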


\begin{proof}
If condition~(i) holds, then the result is a simple consequence of Theorem~\ref{estimnaalphaerreur} and the fact that $\tau(q) \ll q^{\epsilon}$ for any $\epsilon >0$ (Lemma~\ref{tau}).

If condition~(ii) holds, since $\sum_{1\le k\le q} \tau(k) \ll q\log q$ by Lemma~\ref{tau}, making an Abel transformation in the expression for $\Psi(Q)$ shows that $\Psi(Q) \ll \Phi(Q) \log Q$. The conclusion follows in this case also.
\end{proof}

\begin{rem}\label{sssuite}
In the statement of Theorem~\ref{estimnaalphaerreur}, no restrictions whatsoever are imposed on the way the intervals $I_q$ vary with $q$. Therefore the condition $q\alpha \in I_q \imod{1}$ appearing in the definition~(\ref{nqalpha}) of $\mathcal{N}\left(Q, \alpha \right)$ may be regarded as holding for the numbers $q_k$ of an arbitrarily increasing sequence. Then Corollary~\ref{equivnqalpha} is still valid for such a sequence $\left(q_k\right)_{k\ge 1}$.
\end{rem}

\subsection{The proof of Theorem~\ref{sec}}

In order to prove Theorem~\ref{sec}, recall that it suffices to establish the upper bound for the Hausdorff dimension of $W_{\tau}\left(P_{\alpha} \right)$ in the case of the set $R^{*}_{\tau}(\alpha)$ as defined in~(\ref{rtalphaprime}). Without loss of generality, it may be assumed that $\tau\in (d, d+1)$, the result in the case $\tau=d+1$ following from an obvious passage to the limit. Furthermore, since the set $R^{*}_{\tau}(\alpha)$  is invariant when translated by an integer, it suffices to prove Theorem~\ref{sec} for the subset $R^{*}_{\tau}(\alpha) \cap [0, 1]$ which, for the sake of simplicity, shall still be denoted by $R^{*}_{\tau}(\alpha)$ in what follows.

The fact that the fractions $p/q$ are not necessarily irreducible in the definition of the set $R^{*}_{\tau}(\alpha)$ induces considerable difficulties as one needs to take into account the order of magnitude of the highest common factor between $p$ and $q$ to compute $\dim R^{*}_{\tau}(\alpha)$. In fact, it is more convenient to work with $\gcd(b,q)$. To this end, define for $\epsilon\in [0, 1]$ and $\delta >0$ the set $R_{\tau}^{*}(\alpha, \epsilon, \delta)$ as
\begin{equation}\label{rtaualhpaetoileepsilondelta}
\left\{ x \in [0,1] \; : \;
\begin{split}
&  \left|x-\frac{p}{q}\right|<\frac{1}{q^{\tau}}\quad \mbox{  and  } \\
&  \left|\alpha-\frac{b}{q^d}\right|<\frac{1}{q^{\tau}}\quad \mbox{  with  } \quad b\equiv a_dp^d \imod{q}
\end{split}
\quad \mbox{ i.o. with } q^{\epsilon}\le \gcd(b,q) < q^{\epsilon + \delta} \right\}.
\end{equation} 
It should be obvious that 
\begin{equation*}
R^{*}_{\tau}(\alpha) = \bigcup_{0\le \epsilon < \epsilon + \delta \le 1} R_{\tau}^{*}(\alpha, \epsilon, \delta).
\end{equation*}

Let furthermore $I_{\tau}^{*}(\alpha, \epsilon, \delta)$ be the set 
\begin{equation}\label{Ietoiletauepsilondelta}
I_{\tau}^{*}(\alpha, \epsilon, \delta) := \left\{\alpha \in (0,1) \; : \; 
  \left|\alpha-\frac{b}{q^d}\right|<\frac{1}{q^{\tau}}\quad \mbox{ i.o. with  } b\in a_d G_d(q) \mbox{ and }   q^{\epsilon}\le \gcd(b,q) < q^{\epsilon + \delta} \right\}.
\end{equation} 

\begin{nota}
Given $\epsilon\in [0, 1]$ and $\delta >0$, $\mathcal{N}\left(Q, \alpha, \epsilon, \delta \right)$ shall denote the counting function of the set $I_{\tau}^{*}(\alpha, \epsilon, \delta)$, which can be defined more conveniently in this case as follows~:
\begin{align}\label{dederniereminute}
&\mathcal{N}\left( Q, \alpha, \epsilon, \delta \right) := \nonumber \\
&\textrm{Card}\left\{q^d\in \llbracket 1 , Q \rrbracket  \; : \; \left|q^d\alpha- b\right|<q^{d-\tau} \; \; \mbox{i.o. with  } b\in a_d G_d(q) \mbox{ and }   q^{\epsilon}\le \gcd(b,q) < q^{\epsilon + \delta} \right\}.
\end{align}
\end{nota}

With these definitions and this notation at one's disposal, one may now state the following lemma.

\begin{lem}\label{estimfcterreuretvidem}
Assume that $\tau \in (d, d+1)$. Then the set $R_{\tau}^{*}(\alpha, \epsilon, \delta)$ is empty for almost all $\alpha\in [0,1]$ if $\epsilon > d+1-\tau$.

Furthermore, if $0\le \epsilon < \epsilon + \delta < d+1-\tau$, then, for almost all $\alpha\in [0, 1]$, $$Q^{d+1-\tau-\epsilon - \delta - \mu} \; \ll \; \mathcal{N}\left(Q, \alpha, \epsilon, \delta \right) \; \ll \; Q^{d+1-\tau-\epsilon + \nu},$$ where $\mu , \nu>0$ are arbitrarily small. 
\end{lem}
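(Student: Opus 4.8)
The plan is to analyze the counting function $\mathcal{N}(Q,\alpha,\epsilon,\delta)$ directly via Corollary~\ref{equivnqalpha}, after setting up the appropriate sequence of intervals $(I_q)$ and computing the measure of their union. First I would establish the emptiness statement when $\epsilon > d+1-\tau$. The key observation is that the condition $q^{\epsilon}\le \gcd(b,q)$ forces $b$ to be a very constrained residue: writing $g=\gcd(b,q)$, the admissible values of $b$ modulo $q^d$ with $b\in a_dG_d(q)$ and $g\ge q^{\epsilon}$ are rare. The plan is to cover $I_{\tau}^{*}(\alpha,\epsilon,\delta)$ by intervals of radius $q^{-\tau}$ centered at the admissible $b/q^d$, count those centers using Proposition~\ref{decompte} and Remark~\ref{remsolutionclasse} (which control how many $d$-th powers have a prescribed $\pi$-adic valuation, i.e.\ a prescribed gcd with $q$), and show via the Borel--Cantelli lemma that the resulting measure series converges once $\epsilon$ exceeds $d+1-\tau$. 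Since the emptiness of $R_{\tau}^{*}(\alpha,\epsilon,\delta)$ in the metric sense follows from that of $I_{\tau}^{*}(\alpha,\epsilon,\delta)$ exactly as in subsection~\ref{subsec4.1}, this disposes of the first assertion.

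For the two-sided asymptotic bound in the range $0\le\epsilon<\epsilon+\delta<d+1-\tau$, I would apply Sprind\v{z}uk's estimate through Corollary~\ref{equivnqalpha}. Reading off definition~(\ref{dederniereminute}), the counting function $\mathcal{N}(Q,\alpha,\epsilon,\delta)$ is precisely of the form handled by Theorem~\ref{estimnaalphaerreur}, where for each admissible $q$ the interval $I_q$ has length comparable to $q^{-\tau}$ times the number $N_q$ of residues $b\in a_dG_d(q)$ in the prescribed gcd window $q^{\epsilon}\le\gcd(b,q)<q^{\epsilon+\delta}$. The plan is to show that
\begin{equation*}
\sum_{q^d\le Q}\lambda(I_q)\;\asymp\;\sum_{q^d\le Q}\frac{N_q}{q^{\tau}}\;\asymp\;Q^{(d+1-\tau-\epsilon)/d}
\end{equation*}
up to factors of the shape $q^{o(1)}$ coming from $\tau(q)$, $\omega(q)$ and the counting of power residues. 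Counting $N_q$ is the arithmetic heart: by Remark~\ref{remsolutionclasse}, residues $b$ in the $s$-th divisibility class (those with $\nu_{\pi}(b)$ forced at each prime) correspond to $\gcd(b,q)\asymp q^{\epsilon}$ when $s$ tracks the exponent $\epsilon$, and the number of such $d$-th powers is controlled by the quotients $\varphi(\pi^{k-sd})/u_d(\pi^{k-sd})$. Summing $\lambda(I_q)$ over $q$ then yields a series $\sum q^{d-1-\tau+\,(\text{gcd contribution})}$, and a direct comparison against $Q^{1/d}$-scaled partial sums gives the exponent $d+1-\tau-\epsilon$ (the upper bound using the full window up to $q^{\epsilon+\delta}$, the lower bound using only the bottom of the window, which accounts for the asymmetry between the $-\epsilon$ and $-\epsilon-\delta$ exponents and for the $\mu,\nu$ losses). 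I would verify that condition (i) of Corollary~\ref{equivnqalpha} holds, since $\Phi(Q)\gg Q^{\kappa}$ for a suitable $\kappa>0$ in this range, so that $\mathcal{N}(Q,\alpha,\epsilon,\delta)\sim\Phi(Q)$ for almost all $\alpha$; combined with Remark~\ref{sssuite}, this legitimizes restricting to the increasing subsequence of admissible denominators.

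The main obstacle I anticipate is the precise counting of $N_q$, namely the number of elements of $a_dG_d(q)$ whose gcd with $q$ lies in a prescribed dyadic-type window. This requires assembling the local counts from Proposition~\ref{decompte} and Remark~\ref{remsolutionclasse} across all prime powers dividing $q$ via multiplicativity (Lemma~\ref{multiruedq}), while tracking how the constraint $q^{\epsilon}\le\gcd(b,q)<q^{\epsilon+\delta}$ partitions the prime factorization of $q$ into the part contributing to the gcd and the part contributing to the coprime residue. Keeping the error terms from $2^{\omega(q)}$, $(4d)^{\omega(q)}$ and $\tau(q)$ absorbed into the arbitrarily small $\mu,\nu$ (legitimate by Lemmas~\ref{tau} and~\ref{omega}) is delicate but routine; the genuine difficulty is ensuring that the two-sided estimate on $N_q$ is tight enough, for a set of $q$ of the correct density, that the summation produces matching exponents $d+1-\tau-\epsilon-\delta$ and $d+1-\tau-\epsilon$ rather than a gap. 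Once $N_q$ is pinned down, the remaining steps are bookkeeping with geometric and zeta-type series.
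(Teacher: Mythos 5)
Your plan follows the paper's proof essentially step for step: the same cover $\bigcup_{q\ge N}J^{*}_{\tau}(q,\epsilon,\delta)$ by intervals of radius $q^{-\tau}$ centred at the admissible $b/q^{d}$; the same multiplicative counting of the residues $b\in a_{d}G_{d}(q)$ with $\gcd(b,q)$ in the prescribed window (the paper reindexes via the ring isomorphism $a\Z/q\Z\cong\Z/(q/a)\Z$, turning $\left|B_{P}(q,\epsilon,\delta)\right|$ into the divisor sum $\sum_{l\mid q,\; q^{1-\epsilon-\delta}<l\le q^{1-\epsilon}}r_{d}(l)$ and then invoking the global bounds~(\ref{majorationrdq}) and~(\ref{rqtilde}) rather than re-deriving the local class counts of Remark~\ref{remsolutionclasse}, but this is the same arithmetic); the convergent Borel--Cantelli lemma for the emptiness assertion when $\epsilon>d+1-\tau$; and Corollary~\ref{equivnqalpha}, condition (i), together with Remark~\ref{sssuite} for the almost-everywhere two-sided asymptotics, with the factors $2^{\omega(q)}$, $(4d)^{\omega(q)}$ and $\tau(q)^{2}$ absorbed into the arbitrarily small $\mu,\nu$ via Lemmas~\ref{tau} and~\ref{omega}. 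Your anticipated difficulty about the lower bound on $N_{q}$ holding for enough $q$ is the genuinely delicate point, and the paper handles it as you suggest, restricting the minorizing sum to denominators coprime to $a_{d}$.

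One point in your plan does need correcting: your target $\sum_{q^{d}\le Q}\lambda(I_{q})\asymp Q^{(d+1-\tau-\epsilon)/d}$ is normalized inconsistently with the exponents you must prove. Despite the literal wording of~(\ref{dederniereminute}), the convention actually in force --- both in the paper's proof, which concludes $\mathcal{N}(Q,\alpha,\epsilon,\delta)\sim\sum_{1\le q\le Q}\lambda\left(J^{*}_{\tau}(q,\epsilon,\delta)\right)$, and in the subsequent application in Corollary~\ref{majdimhausrtauetoiledeltaepsilon}, where $\mathcal{N}(q_{n},\alpha,\epsilon,\delta)=n$ with $q_{n}$ the $n$-th admissible \emph{denominator} --- is that $\mathcal{N}(Q,\alpha,\epsilon,\delta)$ counts the $q\le Q$, not the $q$ with $q^{d}\le Q$. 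Under your reading you would obtain bounds of order $Q^{(d+1-\tau-\epsilon)/d\pm}$, and no choice of the arbitrarily small $\mu,\nu$ can reconcile a factor-$d$ rescaling of the exponent with the stated $Q^{d+1-\tau-\epsilon\pm}$; summing over $q\le Q$ instead reproduces exactly the chain $Q^{d+1-\tau-\epsilon-\delta-\mu}\ll\sum_{q\le Q,\;\gcd(a_{d},q)=1}q^{-(\tau-d+\epsilon+\delta+\mu)}\ll\sum_{q\le Q}\lambda\left(J^{*}_{\tau}(q,\epsilon,\delta)\right)\ll Q^{d+1-\tau-\epsilon+\nu}$ used in the paper. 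With that normalization fixed, your argument is the paper's argument.
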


\begin{proof}
To demonstrate the first part of the statement, it suffices to prove that the set $I_{\tau}^{*}(\alpha, \epsilon, \delta)$ is empty in the metric sense as soon as $\epsilon > d+1-\tau$. With this goal in mind, define $$B_P\left(q, \epsilon, \delta \right) := \left\{ b \imod{q} \; : \;   b\in a_d G_d(q) \; \mbox{ and } \; q^{\epsilon}\le \gcd(b,q) < q^{\epsilon + \delta} \right\}$$ and 
\begin{equation}\label{jtauetoileqalphaepsilondelta}
J^*_{\tau}(q, \epsilon, \delta) := \bigcup_{\underset{b\in B_P\left(q, \epsilon, \delta \right)}{0\le b \le q^d-1}}\left(\frac{b}{q^d}-\frac{1}{q^{\tau}} , \frac{b}{q^d}+\frac{1}{q^{\tau}}  \right),
\end{equation} 
in such a way that $\cup_{q\ge N} J^*_{\tau}(q, \epsilon, \delta)$ is a cover of $I_{\tau}^{*}(\alpha, \epsilon, \delta)$ for any $N\ge 1$.

Since
\begin{equation*}
\left| B_P\left(q, \epsilon, \delta \right) \right| = \sum_{\underset{q^{\epsilon} \le a < q^{\epsilon + \delta}}{a\mid q}} \textrm{Card} \left\{ b\imod{q} \; : \; b\in a_d G_d(q) \, \mbox{ and } \, \gcd(b,q) = a \right\},
\end{equation*}
it should be clear that 
\begin{equation*}
\left| B_P\left(q, \epsilon, \delta \right) \right| = \sum_{\underset{q^{\epsilon} \le a < q^{\epsilon + \delta}}{a\mid q}} \textrm{Card} \left\{ b\imod{q} \, : \, b\in G_d(q) \mbox{ and } \gcd(b,q) = a \right\}
\end{equation*} 
if $\gcd(a_d, q)=1$ and that
\begin{equation*}
\left| B_P\left(q, \epsilon, \delta \right) \right| \le \sum_{\underset{q^{\epsilon} \le a < q^{\epsilon + \delta}}{a\mid q}} \textrm{Card} \left\{ b\imod{q} \, : \, b\in G_d(q) \mbox{ and } \gcd(b,q) = a \right\}
\end{equation*} 
if $\gcd(a_d, q)>1$.

Now, if $a$ divides $q$, the ring $a\Z/q\Z$ is isomorphic to $\Z/\tilde{q}\Z$, where $\tilde{q} = q/a$. Therefore, for such an integer $a$, 
\begin{equation*}
\textrm{Card} \left\{ b\imod{q} \, : \, b\in G_d(q) \mbox{ and } \gcd(b,q) = a \right\} = \textrm{Card} \left\{ b\imod{\frac{q}{a}} \, : \, b\in G_d\left(\frac{q}{a}\right) \right\} := r_d\left(\frac{q}{a}\right)
\end{equation*}
from the definition of $r_d(n)$ in subsection~\ref{countt}. Therefore,
\begin{equation}\label{cardbpqepsilondelta}
\left| B_P\left(q, \epsilon, \delta \right) \right| = \sum_{\underset{q^{\epsilon} \le a < q^{\epsilon + \delta}}{a\mid q}} r_d\left(\frac{q}{a}\right) = \sum_{\underset{q^{1-\epsilon -\delta} < l \le q^{1-\epsilon}}{l\mid q}} r_d(l)
\end{equation}
if $\gcd(a_d, q)=1$ and 
\begin{equation*}
\left| B_P\left(q, \epsilon, \delta \right) \right|\le \sum_{\underset{q^{\epsilon} \le a < q^{\epsilon + \delta}}{a\mid q}} r_d\left(\frac{q}{a}\right) = \sum_{\underset{q^{1-\epsilon -\delta} < l \le q^{1-\epsilon}}{l\mid q}} r_d(l)
\end{equation*}
if $\gcd(a_d, q)>1$.

From~(\ref{majorationrdq}) and (\ref{rqtilde}), it is readily checked that 
\begin{align}\label{sumrdlepsilondelta}
\frac{q^{1-\epsilon - \delta}}{(4d)^{\omega\left(q\right)}} \; \le \; \sum_{\underset{q^{1-\epsilon -\delta} < l \le q^{1-\epsilon}}{l\mid q}} r_d(l) \; \le \; 2^{\omega\left( q\right)} \tau(q)^2 q^{1-\epsilon} .
\end{align}
Thus, combining~(\ref{jtauetoileqalphaepsilondelta}), (\ref{cardbpqepsilondelta}) and (\ref{sumrdlepsilondelta}), it follows that, if $\gcd(a_d, q)=1$, 
\begin{equation}\label{encadrementlambdajtauetoileepsilondelta}
\frac{2q^{d-\epsilon-\delta}}{(4d)^{\omega\left(q\right)} q^{\tau}}\; \le \; \lambda\left(J^*_{\tau}(q, \epsilon, \delta )\right) = \frac{2\left|B_P\left(q, \epsilon, \delta \right) \right|q^{d-1}}{q^{\tau}} \; \le \; \frac{2.2^{\omega(q)}\tau(q)^2 q^{d-\epsilon}}{q^{\tau}}\cdotp
\end{equation}
On the one hand, Lemmas~\ref{tau} and \ref{omega} imply that the right-hand side of~(\ref{encadrementlambdajtauetoileepsilondelta}) is the general term of a series which converges whenever $\epsilon > 1+d-\tau$, hence, from the convergent part of the Borel--Cantelli Lemma, $\lambda\left(I^*_{\tau}(q, \epsilon, \delta )\right) =0$ as soon as $\epsilon > 1+d-\tau$.

On the other hand, Lemma~\ref{tau}, Lemma~\ref{omega} and~(\ref{encadrementlambdajtauetoileepsilondelta}) also imply that, for any $\mu, \nu >0$, 
\begin{align*}
Q^{1+d-\tau-\epsilon - \delta - \mu}\; \ll \; \sum_{\underset{\gcd(a_d, q)=1}{1\le q \le Q}} \frac{1}{q^{\tau-d+\epsilon+\delta+\mu}}\; &\ll \; \sum_{1\le q \le Q} \lambda\left(J^*_{\tau}(q, \epsilon, \delta )\right) \\ 
&\ll \; \sum_{1\le q \le Q} \frac{1}{q^{\tau-d+\epsilon - \nu}} \; \ll \; Q^{1+d-\tau - \epsilon + \nu}.
\end{align*}
To conclude the proof, it suffices to notice that, if $\mu$ is chosen so small as $1+d-\tau-\epsilon-\delta-\mu >0$, then, from Corollary~\ref{equivnqalpha}, $$\mathcal{N}\left(Q, \alpha, \epsilon, \delta \right) \sim \sum_{1\le q \le Q} \lambda\left(J^*_{\tau}(q, \epsilon, \delta )\right) \quad \mbox{ as } Q \rightarrow +\infty $$ almost everywhere. 
\end{proof}

\begin{coro}\label{majdimhausrtauetoiledeltaepsilon}
Let $\tau\in (d, d+1)$. Assume that $\epsilon$ and $\delta$ are such that $0\le \epsilon < \epsilon + \delta < 1+d-\tau$. Then, for almost all $\alpha\in [0, 1]$, $$\dim R_{\tau}^{*}(\alpha, \epsilon, \delta) \le \frac{1+d-\tau+\delta}{\tau}\cdotp$$
\end{coro}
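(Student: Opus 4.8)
The plan is to produce an efficient cover of the set $R_{\tau}^{*}(\alpha, \epsilon, \delta)$ and apply the standard definition of Hausdorff dimension via a natural cover. The key observation is that the set $R_{\tau}^{*}(\alpha, \epsilon, \delta)$ consists of those $x\in[0,1]$ that lie within $q^{-\tau}$ of a fraction $p/q$, for infinitely many pairs $(p,q)$ for which the companion condition $|\alpha - b/q^d| < q^{-\tau}$ holds with $b\equiv a_d p^d\imod{q}$ and $q^{\epsilon}\le\gcd(b,q)<q^{\epsilon+\delta}$. Thus the count of admissible $q$ up to a threshold $Q$ is controlled precisely by the counting function $\mathcal{N}(Q,\alpha,\epsilon,\delta)$ from~(\ref{dederniereminute}), whose order of magnitude is pinned down by Lemma~\ref{estimfcterreuretvidem}. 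First I would fix, for each admissible $q$, the intervals of $x$ arising from the first condition $|x-p/q|<q^{-\tau}$: for a given $q$ with $q^d\in\llbracket 1,Q\rrbracket$ contributing to $\mathcal{N}$, the values of $p$ range over $\llbracket 0,q\rrbracket$ (since $x\in[0,1]$), yielding at most $q+1$ intervals each of length $2q^{-\tau}$.

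Next I would set up the cover at a dyadic scale. For each integer $j$, restrict to denominators $q$ with $2^{j}\le q^d < 2^{j+1}$, i.e. $q\asymp 2^{j/d}$, and use the $s$-dimensional Hausdorff sum. The number of admissible $q$ in this range is, by Lemma~\ref{estimfcterreuretvidem} applied at $Q\asymp 2^{j+1}$ and $Q\asymp 2^{j}$, bounded above by $\ll 2^{j(1+d-\tau-\epsilon+\nu)/d}$ for any $\nu>0$ and almost all $\alpha$. For each such $q$ there are at most $q+1\ll 2^{j/d}$ intervals of diameter $2q^{-\tau}\ll 2^{-j\tau/d}$. The contribution to the $s$-dimensional Hausdorff sum at this scale is therefore
\begin{align*}
\sum_{2^{j}\le q^d < 2^{j+1}} (q+1)\,(2q^{-\tau})^{s} \;\ll\; 2^{j(1+d-\tau-\epsilon+\nu)/d}\cdot 2^{j/d}\cdot 2^{-js\tau/d}.
\end{align*}
Summing over $j$ (and using that $R_{\tau}^{*}(\alpha,\epsilon,\delta)$ is a limsup set, so the tail $\sum_{j\ge J}$ covers it for every $J$), the series converges provided the exponent of $2^{j/d}$ is negative, namely
\begin{align*}
1+d-\tau-\epsilon+\nu + 1 - s\tau < 0,
\end{align*}
which rearranges to $s > \dfrac{2+d-\tau-\epsilon+\nu}{\tau}$.

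Here I must reconcile the exponent with the claimed bound $(1+d-\tau+\delta)/\tau$. The discrepancy comes from being wasteful in the count of numerators $p$: for a fixed $q$ one should not allow all $q+1$ values of $p$, but only those compatible with the congruential data. Since $b\equiv a_d p^d\imod q$ and $\gcd(b,q)\ge q^{\epsilon}$, the residue $p\imod q$ is constrained to lie in a set of size controlled by $\gcd(b,q)$ and the number of $d$-th roots; the sharp accounting replaces the factor $q+1\ll 2^{j/d}$ by a factor reflecting the window $q^{\epsilon}\le\gcd(b,q)<q^{\epsilon+\delta}$, effectively trading the crude $q^{1}$ for $q^{\delta}$ up to the multiplicative error terms $2^{\omega(q)}\tau(q)$ that Lemmas~\ref{tau} and~\ref{omega} render harmless. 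With this refinement the governing inequality becomes $1+d-\tau-\epsilon+\delta-s\tau<0$, i.e. convergence for all $s>(1+d-\tau+\delta)/\tau$, whence $\dim R_{\tau}^{*}(\alpha,\epsilon,\delta)\le (1+d-\tau+\delta)/\tau$ after letting the arbitrary losses $\nu\to 0$. The main obstacle, and the step demanding the most care, is precisely this bookkeeping of how many intervals $(p/q - q^{-\tau}, p/q + q^{-\tau})$ genuinely contribute for each admissible $q$: one must verify that the congruence $b\equiv a_d p^d\imod q$ together with the $\gcd$ window confines $p$ to about $q^{\delta}$ residue classes (up to the standard $2^{\omega(q)}\tau(q)$ factors), rather than naively summing over all $p\in\llbracket 0,q\rrbracket$. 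This is where the structure of power residues established in subsection~\ref{countt}, together with Remark~\ref{remsolutionclasse} on the number of solutions to $b\equiv x^d\imod{\pi^k}$, enters decisively.
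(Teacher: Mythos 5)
Your covering framework is essentially the paper's: cover $R_{\tau}^{*}(\alpha,\epsilon,\delta)$ by the intervals $\left(p/q-q^{-\tau},\,p/q+q^{-\tau}\right)$ attached to admissible pairs $(q,b)$, bound the number of admissible denominators up to $X$ by $X^{d+1-\tau-\epsilon+\nu}$ via Lemma~\ref{estimfcterreuretvidem}, and count the residues $p\imod{q}$ allowed by the congruence. But the decisive step --- which you yourself flag as ``the step demanding the most care'' --- is asserted with a wrong count and never proved. For a fixed admissible pair $(q,b)$, the number of $p\imod{q}$ with $b\equiv a_d p^d \imod{q}$ is \emph{not} about $q^{\delta}$: it is of the order of a positive power of $\gcd(b,q)\ge q^{\epsilon}$. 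Concretely, take $d=2$, $a_d=1$, $q=\pi^k$ and $b=\pi^{2s}b'$ with $\gcd(b',\pi)=1$ and $2s<k$; the solutions are exactly $p=\pi^{s}p'$ with $p'^2\equiv b' \imod{\pi^{k-2s}}$, and there are $u_2\left(\pi^{k-2s}\right)\pi^{s}$ of them modulo $\pi^k$, i.e.\ roughly $\gcd(b,q)^{1/2}\approx q^{\epsilon/2}$ --- polynomially many in $q$ whenever $\epsilon>0$, vastly exceeding $q^{\delta}$ for small $\delta$ (in general the count is of order $\gcd(b,q)^{(d-1)/d}$ times $u_d$--factors, consistent with Remark~\ref{remsolutionclasse}). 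Your exponent arithmetic also betrays the slip: $1+d-\tau-\epsilon+\delta-s\tau<0$ rearranges to $s>(1+d-\tau-\epsilon+\delta)/\tau$, not to the threshold $(1+d-\tau+\delta)/\tau$ you state, so the claimed bookkeeping was not actually carried out.

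The repair is exactly the paper's argument, and it slots into your dyadic summation unchanged. One shows that $c_n:=\textrm{Card}\left\{p\imod{q_n}\;:\;b_n\equiv a_d p^d \imod{q_n}\right\}$ is multiplicative in $q_n$ and, prime power by prime power (using Remark~\ref{remsolutionclasse} to get $u_d\left(\pi^{\nu_{\pi}(q_n)-\nu_{\pi}(b_n)}\right)$ solutions when $b_n\not\equiv 0 \imod{\pi^{\nu_{\pi}(q_n)}}$, and the crude bound $\pi^{\nu_{\pi}(q_n)}$ otherwise), obtains $c_n\le \gcd(b_n,q_n)\,u_d(q_n)\le q_n^{\epsilon+\delta}(2d)^{\omega(q_n)}\ll q_n^{\epsilon+\delta+\gamma}$ by~(\ref{majudq}) and Lemma~\ref{omega}. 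Inserting $q^{\epsilon+\delta+\gamma}$ where you wrote $q^{\delta}$, the governing inequality becomes $(1+d-\tau-\epsilon+\nu)+(\epsilon+\delta+\gamma)-s\tau<0$: the $+\epsilon$ from the numerator count cancels against the $-\epsilon$ from the denominator count of Lemma~\ref{estimfcterreuretvidem} --- this cancellation, not an improved numerator count, is the actual mechanism --- giving convergence for $s>(1+d-\tau+\delta+\nu+\gamma)/\tau$ and the stated bound on letting $\nu,\gamma\to 0$. Two minor points you should also make explicit: for $\tau>d$ and $q$ large there is at most one admissible $b$ per $q$, since $\left|q^d\alpha-b\right|<q^{d-\tau}$ confines $b$ to an interval of length less than $1$; and the paper sums directly over the increasing sequence $(q_n)$ of admissible denominators via $q_n\gg n^{1/(d+1-\tau-\epsilon+\gamma)}$ rather than dyadically, but the two summations are equivalent.
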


\begin{proof}
\sloppy By the definition of the set $R_{\tau}^{*}(\alpha, \epsilon, \delta)$ in~(\ref{rtaualhpaetoileepsilondelta}), its $s$--dimensional Hausdorff measure $\mathcal{H}^s \left(R_{\tau}^{*}(\alpha, \epsilon, \delta) \right)$ satisfies the inequality
\begin{equation}\label{shausdmesurertauetoileaplhaespilondelta}
\mathcal{H}^s \left(R_{\tau}^{*}(\alpha, \epsilon, \delta) \right) \le \sum_{q\ge 1} \sum_{b} \sum_{\underset{b\equiv a_d p^d \imod{q}}{0\le p \le q-1}} \frac{2}{q^{\tau s}},
\end{equation}
where the second sum runs over all the possible integers $b$ such that 
\begin{equation}\label{approxclassiquealphab}
\left|\alpha-\frac{b}{q^d} \right| <\frac{1}{q^{\tau}} \quad \mbox{ and } \quad q^{\epsilon} \le \gcd(b,q) < q^{\epsilon+\delta}.
\end{equation} 
Note that, provided that $q\ge 1$ is large enough and that $\tau>d$, there exists at most one integer $b$ solution to~(\ref{approxclassiquealphab}). So let $\left(q_n\right)_{n\ge 1}$ denote the strictly increasing sequence of denominators $q_n$ such that~(\ref{approxclassiquealphab}) is satisfied for some integer $b_n$. From the definition of this sequence, (\ref{shausdmesurertauetoileaplhaespilondelta}) may be rewritten as
\begin{equation}\label{shausdmesurertauetoileaplhaespilondeltabis}
\mathcal{H}^s \left(R_{\tau}^{*}(\alpha, \epsilon, \delta) \right) \ll \sum_{n\ge 1} \frac{c_n}{q_n^{\tau s}},
\end{equation}
where $c_n := \textrm{Card} \left\{ p \imod{q_n} \; : \; b_n \equiv a_d p^d \imod{q_n} \right\}.$

In order to compute the value of $c_n$, first notice that, from the reasoning developed in Remark~\ref{multirdq}, $c_n$ is multiplicative in $q_n$ (i.e. $c_{nm}=c_nc_m$ whenever $\gcd(q_n, q_m)=1$). Consider now the equation $b_n \equiv a_d p^d \imod{\pi^{\nu_{\pi}(q_n)}},$ where $\pi$ is any prime divisor of $q_n$~:
\begin{itemize}
\item If $b_n \equiv 0 \imod{\pi^{\nu_{\pi}(q_n)}},$ then the equation $a_dp^d \equiv 0 \imod{\pi^{\nu_{\pi}(q_n)}}$ amounts to the following one~: $d\nu_{\pi}(p)+\nu_{\pi}(a_d) \ge \nu_{\pi}(q_n)$. It is readily checked that the number of solutions in $p \imod{q_n}$ to this equation is $$\pi^{\nu_{\pi}(q_n)} - \pi^{k_d(n,\pi)}, \; \textrm{ where }\; k_d(n,\pi):= \left\lceil \frac{\left(\nu_{\pi}(q_n) - \nu_{\pi}(a_d)\right)_+}{d}\right\rceil .$$ 

\item If $b_n \not\equiv 0 \imod{\pi^{\nu_{\pi}(q_n)}},$ then the equation $b_n \equiv a_dp^d \imod{\pi^{\nu_{\pi}(q_n)}}$ amounts to $$p^d \equiv \frac{b_n}{\pi^{\nu_{\pi}(a_d)}} \left(\frac{a_d}{\pi^{\nu_{\pi}(a_d)}}\right)^{-1} \imod{\pi^{\nu_{\pi}(q_n)-\nu_{\pi}(a_d)}},$$ where the division by $\pi^{\nu_{\pi}(a_d)}$ denotes ordinary integer division while multiplicative inversion is performed in $\Z/ \left(\pi^{\nu_{\pi}(q_n)-\nu_{\pi}(a_d)}\right)\Z.$ Using the terminology introduced in Remark~\ref{remsolutionclasse}, the class of any solution $p \imod{\pi^{\nu_{\pi}(q_n)-\nu_{\pi}(a_d)}}$ to this equation has to be $(\nu_{\pi}(b_n)- \nu_{\pi}(a_d))/d$. Therefore, from Remark~\ref{remsolutionclasse}, the number of solutions in $p \imod{\pi^{\nu_{\pi}(q_n)-\nu_{\pi}(a_d)}}$ to this equation is $$u_d\left(\pi^{\nu_{\pi}(q_n)-\nu_{\pi}(a_d) - d\frac{\nu_{\pi}(b_n)-\nu_{\pi}(a_d)}{d}} \right) = u_d\left(\pi^{\nu_{\pi}(q_n) - \nu_{\pi}(b_n)} \right) \le u_d\left(\pi^{\nu_{\pi}(q_n)}\right)$$ (see Proposition~\ref{decompte} for this last inequality).
\end{itemize}
All things considered, 
\begin{equation*}
c_n \; \le \; \prod_{\underset{b_n\equiv 0 \imod{\pi^{\nu_{\pi}\left(q_n\right)}}}{\pi\mid q_n}} \pi^{\nu_{\pi}\left(q_n\right)} \prod_{\underset{b_n \not\equiv 0 \imod{\pi^{\nu_{\pi}\left(q_n\right)}}}{\pi\mid q_n}} u_d\left(\pi^{\nu_{\pi}\left(q_n\right)}\right) \; \le \; \gcd(b_n, q_n)\, u_d\left(q_n\right).
\end{equation*}
Now, from the definition of the set $R_{\tau}^{*}(\alpha, \epsilon, \delta)$, it may be assumed that $\gcd(b_n, q_n) \le q_n^{\epsilon + \delta}$. Therefore, using~(\ref{majudq}) and Lemma~\ref{omega}, it is readily seen that~(\ref{shausdmesurertauetoileaplhaespilondeltabis}) implies that 
\begin{equation}\label{smesrtauetoilalphaepsdetl}
\mathcal{H}^s \left(R_{\tau}^{*}(\alpha, \epsilon, \delta) \right) \ll \sum_{n\ge 1} \frac{1}{q_n^{\tau s -\epsilon - \delta - \gamma}}
\end{equation} 
for arbitrarily small $\gamma >0$. Since $\mathcal{N}\left(q_n, \alpha, \epsilon, \delta \right) = n$ by the definition of the sequence $\left(q_n\right)_{n\ge 1}$, Lemma~\ref{estimfcterreuretvidem} leads to the estimate $$n^{1/(d+1-\tau -\epsilon + \gamma)} \; \ll \; q_n$$ valid for almost all $\alpha\in [0,1]$ and for arbitrarily small $\gamma >0$. Thus, $$\mathcal{H}^s \left(R_{\tau}^{*}(\alpha, \epsilon, \delta) \right) \; \ll \; \sum_{n\ge 1}\frac{1}{n^{(\tau s - \epsilon - \delta -\gamma)/(d+1-\tau -\epsilon +\gamma)}},$$ which is a convergent series for $s\ge (d+1-\tau +\delta +2\gamma)/\tau$, that is, $$\dim R_{\tau}^{*}(\alpha, \epsilon, \delta) \le \frac{d+1-\tau +\delta +2\gamma}{\tau}\cdotp$$ The result follows on letting $\gamma$ tend to zero.
\end{proof}

The proofs of Lemma~\ref{estimfcterreuretvidem} and Corollary~\ref{majdimhausrtauetoiledeltaepsilon} rely strongly on the fact that, when $\epsilon < 1+d-\tau$, it is always possible to choose $\delta>0$ and $\mu>0$ so small as $1+d-\tau-\epsilon - \delta - \mu >0$. While Lemma~\ref{estimfcterreuretvidem} also implies that the set $R_{\tau}^{*}(\alpha, \epsilon, \delta)$ is empty in the metric sense whenever $\epsilon > 1+d-\tau$, this leaves a gap corresponding to the case where $\epsilon = 1+d-\tau$. This limit case is now studied.

Since $R_{\tau}^{*}(\alpha, \epsilon, \delta) = \emptyset$ for almost all $\alpha\in [0,1]$ when $\epsilon > 1+d-\tau$, it should be clear that $R_{\tau}^{*}(\alpha, 1+d-\tau, \delta) = R_{\tau}^{*}(\alpha, 1+d-\tau, \mu)$ for any $\delta, \mu>0$, the equality holding true in the metric sense. Denote by $R_{\tau}^{*}(\alpha, 1+d-\tau)$ the common set determined by these different values of $\delta>0$ and $\mu >0$, i.e. $$R_{\tau}^{*}(\alpha, 1+d-\tau) := \bigcap_{\delta >0} R_{\tau}^{*}(\alpha, 1+d-\tau, \delta).$$ In other words, $R_{\tau}^{*}(\alpha, 1+d-\tau) = R_{\tau}^{*}(\alpha, 1+d-\tau, \delta)$ for any $\delta>0$ and for almost all $\alpha\in [0,1]$. In a similar way, let $$I_{\tau}^{*}(\alpha, 1+d-\tau) := \bigcap_{\delta >0} I_{\tau}^{*}(\alpha, 1+d-\tau, \delta).$$ Thus, $I_{\tau}^{*}(\alpha, 1+d-\tau)$ is to $R_{\tau}^{*}(\alpha, 1+d-\tau)$ as $I_{\tau}^{*}(\alpha, \epsilon, \delta)$ is to $R_{\tau}^{*}(\alpha, \epsilon , \delta)$ when $0\le \epsilon < \epsilon +\delta < 1+d-\tau$, these last two sets having been defined by~(\ref{rtaualhpaetoileepsilondelta}) and~(\ref{Ietoiletauepsilondelta}).

\begin{nota}
\sloppy The quantity $\mathcal{N}\left(Q, \alpha, 1+d-\tau \right)$ shall denote the counting function of the set \mbox{$I_{\tau}^{*}(\alpha,1+d-\tau)$} defined in a similar way as in~(\ref{dederniereminute}).
\end{nota}

As might be expected, the asymptotic behavior of the function $\mathcal{N}\left(Q, \alpha, 1+d-\tau \right)$ is different from that of $\mathcal{N}\left(Q, \alpha, \epsilon, \delta\right)$ when $0\le \epsilon <\epsilon + \delta < 1+d-\tau$~:

\begin{lem}\label{fctcomptagecritique}
Assume that $\tau\in (d, d+1)$. Then for almost all $\alpha\in\R$, $$\mathcal{N}\left(Q, \alpha, 1+d-\tau \right) \; \ll \; Q^{\mu},$$ where $\mu >0$ is arbitrarily small.
\end{lem}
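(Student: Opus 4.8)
The plan is to reduce the critical counting function to the subcritical one already controlled by Lemma~\ref{estimfcterreuretvidem}, plus a term that is finite for almost all $\alpha$, and then to estimate the subcritical function directly from Sprind\v{z}uk's Theorem~\ref{estimnaalphaerreur} rather than from its Corollary~\ref{equivnqalpha}. Fix $\delta>0$. Splitting the denominators $q$ counted by $\mathcal{N}(Q,\alpha,1+d-\tau)$ according to whether $q^{1+d-\tau}\le\gcd(b,q)<q^{1+d-\tau+\delta}$ or $\gcd(b,q)\ge q^{1+d-\tau+\delta}$, the first family contributes $\mathcal{N}(Q,\alpha,1+d-\tau,\delta)$, while the second is covered by at most $\lceil 1/\delta\rceil$ counting functions of the form $\mathcal{N}(Q,\alpha,\epsilon',\delta)$ with $\epsilon'>1+d-\tau$ (recall $\gcd(b,q)\le q$). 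Each of the latter is $O(1)$ for almost all $\alpha$ by the emptiness statement in the first part of Lemma~\ref{estimfcterreuretvidem}. Hence $\mathcal{N}(Q,\alpha,1+d-\tau)\le \mathcal{N}(Q,\alpha,1+d-\tau,\delta)+O(1)$ almost everywhere, and it remains to bound $\mathcal{N}(Q,\alpha,1+d-\tau,\delta)$ for this fixed $\delta$.

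To this end I would apply Theorem~\ref{estimnaalphaerreur} to the same system of intervals as in the proof of Lemma~\ref{estimfcterreuretvidem} (the passage from $q$ to $q^d$ being permitted by Remark~\ref{sssuite}), so that $\Phi(Q)=\sum_{q\le Q^{1/d}}\lambda(J^*_\tau(q,1+d-\tau,\delta))$ and $\Psi(Q)=\sum_{q\le Q^{1/d}}\lambda(J^*_\tau(q,1+d-\tau,\delta))\tau(q^d)$. Setting $\epsilon=1+d-\tau$ in the upper estimate~(\ref{encadrementlambdajtauetoileepsilondelta}) gives $\lambda(J^*_\tau(q,1+d-\tau,\delta))\ll 2^{\omega(q)}\tau(q)^2/q$, which by Lemmas~\ref{tau} and~\ref{omega} is $\ll q^{-1+\eta}$ for arbitrarily small $\eta>0$; since $\tau(q^d)\ll q^{\eta}$ as well, summation yields $\Phi(Q)\ll Q^{\eta/d}$ and $\Psi(Q)\ll Q^{\eta/d}$. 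A dichotomy then finishes the argument: if $\sum_q\lambda(J^*_\tau(q,1+d-\tau,\delta))$ converges, the convergent Borel--Cantelli lemma gives $\mathcal{N}(Q,\alpha,1+d-\tau,\delta)=O(1)$ almost everywhere; if it diverges, Theorem~\ref{estimnaalphaerreur} applies and yields, for almost all $\alpha$, the estimate $\mathcal{N}(Q,\alpha,1+d-\tau,\delta)=\Phi(Q)+O\!\left(\sqrt{\Psi(Q)}\,(\log\Psi(Q))^{3/2+\kappa}\right)\ll Q^{\eta/d}+Q^{\eta/(2d)}(\log Q)^{3/2+\kappa}$. In either case the bound is $\ll Q^{\mu}$ with $\mu>0$ arbitrarily small, which combined with the first paragraph proves the lemma.

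The hard part, and the reason this limit case cannot be folded into Lemma~\ref{estimfcterreuretvidem}, is that at the critical value $\epsilon=1+d-\tau$ the convergence of the series $\sum_q\lambda(J^*_\tau(q,1+d-\tau,\delta))$ is genuinely undetermined: the upper bound in~(\ref{encadrementlambdajtauetoileepsilondelta}) produces a (barely) divergent series while the lower bound produces a convergent one. Consequently $\Phi(Q)$ need not grow as fast as any power $Q^{\delta'}$, nor even as a power of $\log Q$, so neither hypothesis \textbf{(i)} nor \textbf{(ii)} of Corollary~\ref{equivnqalpha} is available, and one cannot replace $\mathcal{N}$ by $\sum\lambda(J^*_\tau)$ as was done in the subcritical range. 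The care therefore lies in reading the bound $\mathcal{N}\ll Q^{\mu}$ off the raw error term of Theorem~\ref{estimnaalphaerreur}, using only the upper estimates for $\Phi(Q)$ and $\Psi(Q)$ together with both horns of the convergence dichotomy.
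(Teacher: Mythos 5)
Your proposal is correct and follows essentially the same route as the paper: bound $\lambda\left(J^*_{\tau}(q,1+d-\tau,\delta)\right)\ll 2^{\omega(q)}\tau(q)^2/q\ll q^{-1+\eta}$ via~(\ref{encadrementlambdajtauetoileepsilondelta}), deduce $\Phi(Q),\Psi(Q)\ll Q^{\mu}$, and read the bound directly off the error term of Theorem~\ref{estimnaalphaerreur} rather than Corollary~\ref{equivnqalpha}, whose growth hypotheses indeed fail here. Your two additional touches --- the explicit band-splitting of $\gcd(b,q)\ge q^{1+d-\tau+\delta}$ into $O(1)$ contributions, and the convergence/divergence dichotomy before invoking Sprind\v{z}uk's theorem --- merely make rigorous two steps the paper passes over silently (the a.e.\ identification of $I_{\tau}^{*}(\alpha,1+d-\tau)$ with $I_{\tau}^{*}(\alpha,1+d-\tau,\delta)$ at the level of counting functions, and the divergence hypothesis of Theorem~\ref{estimnaalphaerreur}), so they are refinements rather than a different argument.
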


\begin{proof}
\sloppy Let $\delta>0$. Define $J^*_{\tau}(q, 1+d-\tau, \delta)$ as in~(\ref{jtauetoileqalphaepsilondelta}). Then the upper bound for \mbox{$\lambda\left(J^*_{\tau}(q, 1+d-\tau, \delta) \right)$} provided by~(\ref{encadrementlambdajtauetoileepsilondelta}) still holds true, namely $$\lambda\left(J^*_{\tau}(q, 1+d-\tau, \delta) \right) \; \le \; \frac{2.2^{\omega(q)}\tau(q)^2}{q}\cdotp$$ Therefore, since $2^{\omega(q)} = o\left(q^{\mu}\right)$ and $\tau(q) = o\left(q^{\mu}\right)$ for any $\mu>0$ from Lemmas~\ref{tau} and \ref{omega}, for all $Q\ge 1$,
\begin{equation*}
\Phi(Q) := \sum_{q=1}^{Q} \lambda\left(J^*_{\tau}(q, 1+d-\tau, \delta) \right) \; \ll \; \sum_{q=1}^{Q} \frac{1}{q^{1-3\mu}} \; \ll \; Q^{3\mu}
\end{equation*}
and, in a similar way,
\begin{equation*}
\Psi(Q) := \sum_{q=1}^{Q} \lambda\left(J^*_{\tau}(q, 1+d-\tau, \delta) \right) \tau(q) \; \ll \; Q^{4\mu}.
\end{equation*}
Now, $\cup_{q\ge N}J^*_{\tau}(q, 1+d-\tau, \delta)$ is a cover of $I_{\tau}^{*}(\alpha, 1+d-\tau, \delta)$ for any $N\ge 1$. Since the latter set is equal to $I_{\tau}^{*}(\alpha, 1+d-\tau)$  for almost all $\alpha\in [0,1]$, it follows from Theorem~\ref{estimnaalphaerreur} that, for almost all $\alpha\in [0,1]$, $$\mathcal{N}\left(Q, \alpha, 1+d-\tau \right) \; = \; \Phi (Q) + O\left(\sqrt{\Psi(Q)} \left(\log \Psi (Q) \right)^{3/2+\kappa} \right) \; \ll \; Q^{3\mu},$$ where $\kappa>0$ has been chosen arbitrarily.
\end{proof}

\begin{coro}\label{majdimhausdrtauetoilealpheundtau}
Assume that $\tau\in (d, d+1)$. Then, for almost all $\alpha\in [0,1]$, $$\dim R_{\tau}^{*}(\alpha, 1+d-\tau) \le \frac{1+d-\tau}{\tau}\cdotp$$
\end{coro}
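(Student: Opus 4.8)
The plan is to reproduce the Hausdorff-measure estimate used in the proof of Corollary~\ref{majdimhausrtauetoiledeltaepsilon}, but now with the sharper counting bound supplied by Lemma~\ref{fctcomptagecritique}. First I would fix $\delta>0$ arbitrarily and work with the set $R_{\tau}^{*}(\alpha, 1+d-\tau, \delta)$, which equals $R_{\tau}^{*}(\alpha, 1+d-\tau)$ in the metric sense. Exactly as in~(\ref{shausdmesurertauetoileaplhaespilondeltabis}), the $s$--dimensional Hausdorff measure is bounded by a sum $\sum_{n\ge 1} c_n/q_n^{\tau s}$, where $\left(q_n\right)_{n\ge 1}$ is the increasing sequence of denominators realizing the approximation and $c_n$ counts the solutions $p \imod{q_n}$ of $b_n\equiv a_d p^d \imod{q_n}$. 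The multiplicative bound $c_n \le \gcd(b_n,q_n)\,u_d(q_n)$ derived there remains valid, and since $\gcd(b_n,q_n) < q_n^{1+d-\tau+\delta}$ by the definition of the set, the estimate~(\ref{majudq}) together with Lemma~\ref{omega} yields
\begin{equation*}
\mathcal{H}^s \left(R_{\tau}^{*}(\alpha, 1+d-\tau) \right) \; \ll \; \sum_{n\ge 1} \frac{1}{q_n^{\tau s - (1+d-\tau) - \delta - \gamma}}
\end{equation*}
for arbitrarily small $\gamma>0$.

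The decisive difference now is that I would invoke Lemma~\ref{fctcomptagecritique} instead of Lemma~\ref{estimfcterreuretvidem}. Since $\mathcal{N}\left(q_n, \alpha, 1+d-\tau \right) = n$ by the definition of the sequence, and since that counting function is $\ll Q^{\mu}$ for every $\mu>0$, one obtains $n \ll q_n^{\mu}$, i.e. $q_n \gg n^{1/\mu}$ for almost all $\alpha$. This grows \emph{faster} than any power of $n$ as $\mu\to 0$, which is precisely what forces the dimension down to the critical value.

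Substituting this lower bound for $q_n$ into the Hausdorff sum gives
\begin{equation*}
\mathcal{H}^s \left(R_{\tau}^{*}(\alpha, 1+d-\tau) \right) \; \ll \; \sum_{n\ge 1} \frac{1}{n^{\left(\tau s - (1+d-\tau) - \delta - \gamma\right)/\mu}},
\end{equation*}
which converges as soon as the exponent exceeds $1$, and since $\mu$ may be taken arbitrarily small this holds whenever $\tau s - (1+d-\tau) - \delta - \gamma > 0$. Letting $\gamma$ and then $\delta$ tend to zero, convergence holds for every $s > (1+d-\tau)/\tau$, whence $\dim R_{\tau}^{*}(\alpha, 1+d-\tau) \le (1+d-\tau)/\tau$ for almost all $\alpha\in[0,1]$. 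I expect the only genuinely delicate point to be the bookkeeping that lets $\delta$ be absorbed at the end: one must verify that the exceptional $\alpha$--set remains null uniformly as $\delta\to 0$, which follows because $R_{\tau}^{*}(\alpha, 1+d-\tau, \delta)$ stabilizes to $R_{\tau}^{*}(\alpha, 1+d-\tau)$ almost everywhere, so the bound is in fact independent of $\delta$ once the intersection over $\delta$ is taken. The remaining manipulations are the same routine arithmetic-function estimates already established.
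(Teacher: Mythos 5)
Your proof is correct and follows essentially the same route as the paper's: the Hausdorff-measure estimate~(\ref{smesrtauetoilalphaepsdetl}) specialized to $\epsilon = 1+d-\tau$, then Lemma~\ref{fctcomptagecritique} giving $n \ll q_n^{\mu}$ for arbitrary $\mu>0$, and finally letting $\gamma$, $\delta$ and $\mu$ tend to zero. Your concluding remark on the $\delta$--stabilization is precisely the paper's justification as well, via the almost-everywhere equality $I_{\tau}^{*}(\alpha, 1+d-\tau, \delta) = I_{\tau}^{*}(\alpha, 1+d-\tau)$, which lets the critical counting function of Lemma~\ref{fctcomptagecritique} be applied to the sequence $\left(q_n\right)_{n\ge 1}$ defined with a fixed $\delta$.
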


\begin{proof}
Let $\delta >0$. Denote by $\left(q_n\right)_{n\ge 1}$ the strictly increasing sequence of denominators $q_n$ such that~(\ref{approxclassiquealphab}) with $\epsilon = 1+d-\tau$ is satisfied for some integer $b_n$. Then inequality~(\ref{smesrtauetoilalphaepsdetl}) still holds true for the set $R_{\tau}^{*}(\alpha, 1+d-\tau, \delta)$, namely 
\begin{equation*}
\mathcal{H}^s \left(R_{\tau}^{*}(\alpha, 1+d-\tau, \delta) \right) \ll \sum_{n\ge 1} \frac{1}{q_n^{\tau s -1-d+\tau - \delta - \gamma}}
\end{equation*} 
for arbitrarily small $\gamma >0$.

Since $I_{\tau}^{*}(\alpha, 1+d-\tau, \delta) = I_{\tau}^{*}(\alpha, 1+d-\tau)$ for almost all $\alpha\in [0,1]$, the counting functions of these two sets have the same asymptotic behaviour, hence, from Lemma~\ref{fctcomptagecritique}, $$n\;= \; \mathcal{N}\left(q_n, \alpha, 1+d-\tau \right) \; \ll \; q_n^{\mu}$$ almost everywhere, with $\mu>0$ arbitrary. Therefore, for almost all $\alpha\in [0,1]$, $$\mathcal{H}^s \left(R_{\tau}^{*}(\alpha, 1+d-\tau) \right) \; \ll \; \sum_{n\ge 1}\frac{1}{n^{(\tau s - 1 - d + \tau - \delta -\gamma)/\mu}},$$ which is a convergent series for $s\ge (1+d-\tau +\delta +\gamma +\mu)/\tau$, that is, $$\dim R_{\tau}^{*}(\alpha, 1+d-\tau) \le \frac{d+1-\tau +\delta +\gamma + \mu}{\tau}\cdotp$$ The result follows on letting $\gamma$, $\delta$ and $\mu$ tend to zero.
\end{proof}

\begin{proof}[Completion of the proof of Theorem~\ref{sec}.]
In order to prove that $\dim R_{\tau}^{*}(\alpha) \le (d+1-\tau)/\tau$ for almost all $\alpha\in [0,1]$ when $\tau\in (d, d+1)$, recall first that, from Lemma~\ref{estimfcterreuretvidem}, the equality
\begin{equation*}
R_{\tau}^{*}(\alpha) \; = \; \left(\bigcup_{0\le \epsilon < \epsilon +  \delta < 1+d-\tau} R_{\tau}^{*}(\alpha, \epsilon, \delta) \right) \; \cup \; R_{\tau}^{*}(\alpha, 1+d-\tau) 
\end{equation*} 
holds true almost everywhere. Corollary~\ref{majdimhausdrtauetoilealpheundtau} also implies that it suffices to prove that $$\dim \left(\bigcup_{0\le \epsilon < \epsilon +  \delta < 1+d-\tau} R_{\tau}^{*}(\alpha, \epsilon, \delta) \right) \le \frac{1+d-\tau}{\tau}$$ for almost all $\alpha\in [0,1]$ when $\tau$ lies in the interval $(d, d+1)$.

To this end, consider a strictly increasing sequence $\left(\beta_p\right)_{p\ge 0}$ of real numbers from the interval $(0, 1+d-\tau)$ tending to $1+d-\tau$ as $p$ tends to infinity. It should then be obvious that 
\begin{equation*}
\bigcup_{0\le \epsilon < \epsilon +  \delta < 1+d-\tau} R_{\tau}^{*}(\alpha, \epsilon, \delta) = \bigcup_{p\ge 0} R_{\tau}^{*} (\alpha)[p], \quad \textrm{ where } \quad R_{\tau}^{*}(\alpha)[p]:= \bigcup_{0\le \epsilon < \epsilon +  \delta \le \beta_p} R_{\tau}^{*}\left(\alpha, \epsilon, \delta \right).
\end{equation*}
Given $p\in\N$, let $\left(\epsilon_p(k)\right)_{0\le k \le n}$ be the finite sequence subdividing the interval $\left[0 , \beta_p\right]$ into $n\ge 1$ intervals of equal length $\delta_p(n)$ and satisfying $$\epsilon_p(0)=0 \quad \textrm{ and } \quad \epsilon_p(n) = \beta_p = \epsilon_p(n-1)+\delta_p(n).$$
Then $$R_{\tau}^{*}(\alpha)[p] = \bigcup_{k=0}^{n-1} R_{\tau}^{*}\left(\alpha, \epsilon_p(k), \delta_p(n) \right)$$ for any regular subdivision $\left(\epsilon_p(k)\right)_{0\le k \le n}$ of $[0, \beta_p]$ into $n\ge 1$ intervals. Thus, from Corollary~\ref{majdimhausrtauetoiledeltaepsilon}, $$\dim R_{\tau}^{*}(\alpha)[p] = \underset{0\le k \le n-1}{\sup} \dim R_{\tau}^{*}\left(\alpha, \epsilon_p(k), \delta_p(n) \right) \le \frac{d+1-\tau+\delta_p(n)}{\tau},$$ which holds true for almost all $\alpha\in [0,1]$ and for any $n\ge 1$. On letting $n$ tend to infinity, $\delta_p(n)$ tends to zero and it follows that, outside a zero Lebesgue measure set, $$\dim R_{\tau}^{*}(\alpha)[p] \le \frac{1+d-\tau}{\tau}$$ when $\tau\in (d, d+1)$. Since the set $R_{\tau}^{*}(\alpha)$ is the countable union of $R_{\tau}^{*}(\alpha, 1+d-\tau)$ and of $R_{\tau}^{*}(\alpha)[p]$ ($p\in\N$) for almost all $\alpha\in [0,1]$, this completes the proof.
\end{proof}

\section{Concluding Remarks} \label{sec6}

Some remarks on the method developed in this paper and the relevance of the results obtained are stated to conclude. 

\begin{itemize}
\item The upper bound for the Hausdorff dimension of the set $W_{\tau}\left(P_{\alpha}\right)$ stated in Theorem~\ref{sec} is easily seen to be non--optimal as soon as $\tau < d-1$ as it is superseded by the Hausdorff dimension of the set of $\tau$--well approximable numbers given by the Theorem of Jarn\'ik  and Besicovitch~: if $W_{\tau}$ denotes the latter set, then $W_{\tau}\left(P_{\alpha}\right)\subset W_{\tau}$ for any $\tau>0$ and $\dim W_{\tau} = 2/\tau$ whenever $\tau > 2$. 

Now if $\tau\in [d-1, d]$, the study of the case $d=3$ also tends to provide evidence that the upper bound $(1+d-\tau)/\tau$ is still not relevant in the general case. Indeed, when $d=3$, on letting $\tau$ tend to $2$ from above (resp. from below) in Theorem~\ref{sec} (resp. in Theorem~\ref{bdv}), the upper bound thus found for $\underset{\tau\rightarrow 2^+}{\lim} \dim W_{\tau}\left(P_{\alpha}\right)$ is clearly seen to be non--optimal.

More generally, the actual Hausdorff dimension of the set of $\tau$--well approximable points lying on a polynomial curve when $\tau $ is larger than 2 and less than the degree of the polynomial remains an open problem for which nothing is known (see also \cite{base} for another mention of this problem).

\item As mentioned in the introduction of this paper, the upper bound for $\dim W_{\tau}\left(P_{\alpha}\right)$ given by Theorem~\ref{sec} is more than likely the actual value for the Hausdorff dimension of $W_{\tau}\left(P_{\alpha}\right)$ for almost all $\alpha\in [0,1]$ when $\tau$ lies in the interval $(d, d+1]$. To also obtain $(d+1-\tau)/\tau$  as a lower bound for $\dim W_{\tau}\left(P_{\alpha}\right)$, it would be sufficient to prove such a result for the set $\tilde{R}_{\tau}\left( \alpha \right)$ as defined in~(\ref{ensRdetravail}). However, this would imply the study of the distribution of solutions to congruence equations and a quantitative result on the uniformity of such a distribution. For arbitrary polynomials, this appears to be out of reach at the moment.

\item The set of exceptions (with respect to $\alpha$) left by Theorem~\ref{princi} actually contains uncountably many points. Indeed, let $\tau>d+1$ be given and let $(x,y)$ be a pair of real numbers simultaneously $\tau$--well approximable --- this set is uncountable as its Hausdorff dimension is $3/\tau$ from the multidimensional generalization of the Theorem of Jarn\'ik  and Besicovitch. Then, setting $\alpha = y-P(x)$, it is readily seen that $x$ lies in $W_{\tau}\left(P_{\alpha}\right)$ since $x$ and $P(x)+\alpha$ are simultaneously $\tau$--well approximable.
\end{itemize}

\renewcommand{\abstractname}{Acknowledgements}
\begin{abstract}
The author would like to thank his PhD supervisor Detta Dickinson for suggesting the problem and for discussions which helped to develop ideas put forward. The last concluding remark is entirely due to her. He is supported by the Science Foundation Ireland grant RFP11/MTH3084.
\end{abstract}

\maketitle\bibliographystyle{plain}
\bibliography{reference}
\end{document}